\declaretheorem[name=Theorem,numberwithin=section]{thm}
\newtheorem*{thm*}{Theorem}
\newtheorem{prop}[thm]{Proposition}
\theoremstyle{definition}
\newtheorem{defn}[thm]{Definition}
\theoremstyle{remark}
\newcommand{\PPP}{\mathcal{P}}
\newcommand{\RR}{\mathbb{R}}
\newcommand{\OO}{\mathcal{O}}
\newcommand{\ord}{\operatorname{ord}}
\newcommand{\ZZ}{\mathbb{Z}}
\newcommand{\PP}{\mathbb{P}}
\newcommand{\Pic}{\text{Pic}}
\newcommand{\Mgn}{\mathcal{M}_{g,n}}
\newcommand{\Mgb}{\overline{\mathcal{M}}_g}
\newcommand{\Mgnb}{\overline{\mathcal{M}}_{g,n}}
\newcommand{\rk}{\text{rk}}
\newcommand{\res}{\text{res}}
\newtheorem{Thm*}{Theorem*}
\theoremstyle{definition}
\title{On the effective cone of higher codimension cycles in $\overline{\mathcal{M}}_{g,n}$
}
\author{Scott Mullane\\
}
\date{\today}
\begin{document}
\thispagestyle{empty}

\maketitle

\begin{abstract}
We exhibit infinitely many extremal effective codimension-$k$ cycles in $\Mgnb$ in the cases
\begin{itemize}
\item
$g\geq 3, n\geq g-1$ and $k=2$,
\item
$g\geq 2$, $k\leq \min(n-g,g),$ and
\item
 $g=1$, $k\leq n-2$.
\end{itemize}
Hence in these cases the effective cone is not rational polyhedral. 
\end{abstract}

\setcounter{tocdepth}{2}


\section{Introduction}
The birational geometry of the moduli space of curves is broadly dictated by the effective cone of divisors, which has attracted much attention~\cite{HarrisMumford}\cite{EisenbudHarrisKodaira}\cite{Farkas23}\cite{FarkasPopa}\cite{ChenCoskun}\cite{Mullane3}. Though comparatively little is known, there has recently been growing interest in understanding finer aspects of the birational geometry encoded in the cones of higher codimension cycles~\cite{ChenCoskunH}\cite{FL1}\cite{FL2}\cite{ChenTarasca}. 
In this paper we use meromorphic differentials on curves to construct infinitely many cycles that form extremal rays of the effective cone of higher codimension cycles of $\Mgnb$ for fixed  $g$ and $n$. Hence in these cases we show that the effective cone is not rational polyhedral.

The meromorphic strata of canonical divisors of type $\kappa=(k_1,\dots,k_n)$, 
\begin{equation*}
\PPP(\kappa):=\left\{ [C,p_1,\dots,p_n]\in\Mgn\hspace{0.3cm}\big| \hspace{0.3cm}\sum k_ip_i\sim K_C   \right\}
\end{equation*}
for $\kappa$ a meromorphic partition of $2g-2$, form codimension $g$ subvarieties in $\Mgn$. When $g=1$ this condition corresponds to a condition in the group law on an elliptic curve. Chen and Coskun~\cite{ChenCoskun} showed that for $n\geq 3$, in infinitely many cases the closure produces rigid and extremal divisors in $\overline{\mathcal{M}}_{1,n}$ and hence the effective cone of divisors is not rational polyhedral in these cases. In the case that $g\geq2$, the author~\cite{Mullane3} showed the closure of infinitely many of these higher codimension cycles pushforward under the morphism forgetting marked points to give rigid and extremal divisors in $\Mgnb$ for $n\geq g+1$. Hence in these cases the effective cones of divisors are not rational polyhedral.

Chen and Coskun~\cite{ChenCoskunH} showed higher codimension boundary strata to be extremal in some cases and in the cases $\overline{\mathcal{M}}_{1,n}$ for $n\geq 5$ and  $\overline{\mathcal{M}}_{2,n}$ for $n\geq 2$ they used the infinitely many extremal divisors presented in~\cite{ChenCoskun} to produce infinitely many extremal codimension two cycles supported in the boundary of the moduli space. Schaffler~\cite{Schaffler} used the same strategy to obtain $315$ extremal codimension two cycles supported in the boundary of $\overline{\mathcal{M}}_{0,7}$ from the Keel-Vermeire divisors in $\overline{\mathcal{M}}_{0,6}$. From the interior of the moduli space, Chen and Coskun~\cite{ChenCoskunH} identified the closure of the locus of hyperelliptic curves with a marked Weierstrass point in  $\overline{\mathcal{M}}_{3,1}$ and the closure of the locus of hyperelliptic curves in  $\overline{\mathcal{M}}_{4}$ as extremal codimension two cycles. Chen and Tarasca~\cite{ChenTarasca} showed that for $1\leq n\leq 6$, marking $n$ Weierstrass points on a curve gave an extremal codimension $n$ cycle  in $\overline{\mathcal{M}}_{2,n}$. Blankers~\cite{Blankers} extended this to include marking any combination of Weierstrass points and pairs of points that are conjugate under the hyperelliptic involution. However, for any fixed $n$ this still only produced finitely many extremal higher codimension cycles coming from the interior of $\overline{\mathcal{M}}_{2,n}$. 

In this paper we use three methods to construct infinitely many extremal higher codimension cycles from the strata of canonical divisors. In \S\ref{exbdry} we use a gluing construction to obtain extremal codimension two cycles from extremal divisors in a moduli space of lower genus. In \S\ref{principal} and \S\ref{meromorphic} we use an inductive argument to give conditions on $\kappa$ for when the strata of canonical divisors $\PPP(\kappa)$ and pushfowards of this cycle forgetting marked points give extremal higher codimension cycles. In \S\ref{genusone} we restrict to genus $g=1$ and find conditions under which intersecting the strata of canonical divisors will result in extremal higher codimension cycles.

In \S\ref{exbdry} we use the extremal divisors from~\cite{Mullane3} to construct infinitely many codimension two cycles supported in the boundary of $\Mgnb$ for $g\geq 3$ and $n\geq g-1$ giving the following theorem.
\begin{restatable}{thm}{Bdry}
\label{thm:Bdry}
$\text{Eff}^2(\Mgnb)$ is not rational polyhedral for $g\geq 3$ and $n\geq g-1$.
\end{restatable}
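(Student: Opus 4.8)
The plan is to follow the strategy of Chen and Coskun~\cite{ChenCoskunH}: realise the required cycles as pushforwards, under a boundary clutching morphism, of the rigid extremal effective divisors of~\cite{Mullane3} living in a moduli space of lower genus. Fix $g\geq 3$ and $n\geq g-1$ and consider the clutching morphism
\[
\xi\colon \overline{\mathcal{M}}_{g-1,\,n+1}\times\overline{\mathcal{M}}_{1,1}\longrightarrow \overline{\mathcal{M}}_{g,n}
\]
that attaches a fixed one-pointed genus one curve at the last marked point of a genus $g-1$ curve. Its image is the boundary divisor $\Delta$ whose generic point carries an elliptic tail, and $\xi$ identifies the product with the normalisation of $\Delta$, so $\xi$ is finite and birational onto $\Delta$. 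Since $n\geq g-1$ we have $n+1\geq (g-1)+1$, so~\cite{Mullane3} supplies infinitely many meromorphic partitions $\kappa$ for which the pushforward $D_\kappa\subset\overline{\mathcal{M}}_{g-1,n+1}$ of the closure of $\PPP(\kappa)$ under a point-forgetting morphism is a rigid and extremal effective divisor, and for which the rays $\mathbb{R}_{\geq 0}[D_\kappa]$ are pairwise distinct. I would then set
\[
\Gamma_\kappa:=\xi_*\!\bigl(D_\kappa\times\overline{\mathcal{M}}_{1,1}\bigr)\in\text{Eff}^2(\overline{\mathcal{M}}_{g,n}),
\]
which, as $\xi$ is birational onto its image, is the reduced codimension-two cycle supported on $\xi(\overline{D_\kappa}\times\overline{\mathcal{M}}_{1,1})\subset\Delta$ and which inherits rigidity from $D_\kappa$.

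The heart of the argument is to show that each $\Gamma_\kappa$ spans an extremal ray of $\text{Eff}^2(\overline{\mathcal{M}}_{g,n})$. Suppose $\Gamma_\kappa\equiv\Gamma'+\Gamma''$ with $\Gamma',\Gamma''$ effective of codimension two. First I would prove that $\Gamma'$ and $\Gamma''$ are supported on $\Delta$, using the rigidity of $\Gamma_\kappa$ together with a covering/positivity argument showing that no component of an effective cycle occurring in a decomposition of $\Gamma_\kappa$ can meet the interior or the remaining boundary divisors of $\overline{\mathcal{M}}_{g,n}$. Granting this, I would restrict the decomposition to $\Delta$: pulling back along the normalisation $\xi$ and applying the projection formula, the classes $\xi^{*}\Gamma'$ and $\xi^{*}\Gamma''$ are effective divisor classes on $\overline{\mathcal{M}}_{g-1,n+1}\times\overline{\mathcal{M}}_{1,1}$ that sum to $[D_\kappa\times\overline{\mathcal{M}}_{1,1}]$. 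Since $D_\kappa$ spans an extremal ray of $\text{Eff}^1(\overline{\mathcal{M}}_{g-1,n+1})$ and $\overline{\mathcal{M}}_{1,1}$ is a factor, the class $[D_\kappa\times\overline{\mathcal{M}}_{1,1}]$ spans an extremal ray of $\text{Eff}^1$ of the product; hence $\xi^{*}\Gamma'$ and $\xi^{*}\Gamma''$ are proportional to it, and pushing forward gives $\Gamma',\Gamma''\in\mathbb{R}_{\geq 0}\,\Gamma_\kappa$.

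It then remains to see that the rays $\mathbb{R}_{\geq 0}\,\Gamma_\kappa$ are pairwise distinct, which yields that $\text{Eff}^2(\overline{\mathcal{M}}_{g,n})$ has infinitely many extremal rays and so is not rational polyhedral. Two rigid effective cycles lying on a common ray coincide up to a positive scalar and hence have the same support; since $\xi$ is generically injective, $\Gamma_\kappa$ and $\Gamma_{\kappa'}$ sharing a ray would force $D_\kappa$ and $D_{\kappa'}$ to have the same support, hence to span the same ray of $\text{Eff}^1(\overline{\mathcal{M}}_{g-1,n+1})$, which happens for only finitely many pairs. The same scheme would also run with the non-separating clutching $\overline{\mathcal{M}}_{g-1,n+2}\to\overline{\mathcal{M}}_{g,n}$ (needing only $n\geq g-2$), at the cost of tracking its degree-two normalisation map.

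I expect the main obstacle to be the first step of the extremality argument -- excluding components of $\Gamma'$ or $\Gamma''$ outside $\Delta$. In the divisor case the analogue is handled by intersecting with a curve in a covering family with controlled intersection against the relevant boundary divisor, but for codimension-two cycles the dual moving object is two-dimensional and does not sweep out $\overline{\mathcal{M}}_{g,n}$, so instead one must propagate the rigidity of $D_\kappa$ into the ambient space, presumably by an inductive comparison with the corresponding extremality statement on each boundary divisor of $\overline{\mathcal{M}}_{g,n}$ together with a careful analysis of which strata a stray component could meet. The secondary technical points are checking that $\xi^{*}$ respects effectivity (working through the normalisation of $\Delta$ where $\xi$ fails to be an isomorphism) and that extremality of $D_\kappa$ in $\text{Eff}^1$ persists after taking the product with $\overline{\mathcal{M}}_{1,1}$.
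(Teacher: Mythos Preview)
Your setup is correct, but the proof has a genuine gap precisely where you flag it: forcing the components of an effective decomposition $\Gamma_\kappa\equiv\Gamma'+\Gamma''$ to be supported on $\Delta$. Your suggested fix via rigidity and covering families does not go through for codimension-two cycles, and the vague inductive comparison with other boundary strata has no clear mechanism. The paper resolves this by a completely different device: the morphism
\[
ps\colon \Mgnb\longrightarrow \Mgnb^{\text{ps}}
\]
to pseudo-stable curves, which contracts unmarked elliptic tails to cusps and whose exceptional locus is exactly $\Delta_{1:\emptyset}$. Since $ps_*[\Gamma_\kappa]=0$, pushing forward any effective decomposition forces every irreducible component to be contracted by $ps$, hence to lie in $\Delta_{1:\emptyset}$. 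This is packaged in Chen--Coskun's general criterion (Proposition~\ref{higherextremal}), and is the missing idea in your argument.

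There is a second gap in your restriction step. Pulling back arbitrary effective codimension-two cycles along $\xi$ is not a well-defined operation, so ``$\xi^*\Gamma'$ is effective'' is not available to you. The paper avoids any pullback: once support in $\Delta_{1:\emptyset}$ is established, it uses instead that
\[
\alpha_*\colon N^1(\widehat{\Delta}_{1:\emptyset})\longrightarrow N^2(\Mgnb)
\]
is \emph{injective} (Proposition~\ref{injective}), so the decomposition lifts uniquely to $N^1(\widehat{\Delta}_{1:\emptyset})$, where extremality of $D_\kappa\times\overline{\mathcal{M}}_{1,1}$ (Proposition~\ref{prod}) finishes the argument. Proving this injectivity is in fact the main technical work: it requires an explicit computation with many test surfaces in $\Mgnb$ together with pushforwards under $ps$ and forgetful maps to rule out all linear relations among the generators $\gamma,\gamma_\lambda,\gamma_0,\gamma_{i:S},\gamma_{K_j}$. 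Your proposal contains nothing corresponding to this step.
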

We follow the general strategy of Chen and Coskun~\cite{ChenCoskunH}. Consider the gluing morphism 
\begin{equation*}
\alpha:\widehat{\Delta}_{1:\emptyset}=\overline{\mathcal{M}}_{g-1,n+1}\times \overline{\mathcal{M}}_{1,1}\longrightarrow \Mgnb
\end{equation*}
for $g\geq 3$, which glues a $[C,p_1,\dots,p_{n+1}]\in\overline{\mathcal{M}}_{g-1,n+1}$ to $[E,q]\in\overline{\mathcal{M}}_{1,1}$ by identifying $p_{n+1}$ with $q$ to form a node. 
Proposition~\ref{prod} shows pulling back extremal divisors on $\overline{\mathcal{M}}_{g-1,n+1}$ provides extremal divisors on $\widehat{\Delta}_{1:\emptyset}$. Further, the image of $\alpha$ is the locus contracted by the morphism $ps:\Mgnb\longrightarrow\Mgnb^{\text{ps}}$ that contracts unmarked elliptic tails to cusps, where $\Mgnb^{\text{ps}}$ is the alternate compactification of $\Mgn$ by pseudo-stable curves. In this situation Proposition~\ref{higherextremal} shows these cycles will pushforward to provide extremal codimension two cycles in $\Mgnb$ provided
\begin{equation*}
\alpha_*:A^1(\widehat{\Delta}_{1:\emptyset})=N^1(\widehat{\Delta}_{1:\emptyset})\longrightarrow N^2(\Mgnb)
\end{equation*}
is injective. Proposition~\ref{injective} shows this map to be injective in the cases considered by showing there are no nontrivial relations between the images of the generators of $N^1(\widehat{\Delta}_{1:\emptyset})$ through the use of test surfaces in $\Mgnb$ and pushing forward any possible such relation under forgetful morphisms and $ps$ that contracts unmarked elliptic tails.

For the rest of the paper we turn to cycles intersecting the interior of the moduli space. In \S\ref{principal} we show that pushforwards of the principal strata give rigid and extremal cycles in all codimensions.
\begin{restatable}{thm}{Principal}
\label{thm:Principal} The cycle $[\varphi_j{}_*\overline{\PPP}(1^{2g-2})]$ for $j=0,\dots,g-2$ is rigid and extremal in $\mbox{Eff}^{\hspace{0.1cm}g-j-1}(\overline{\mathcal{M}}_{g,2g-j-2})$, 
 where $\varphi_j:\overline{\mathcal{M}}_{g,2g-2}\longrightarrow\overline{\mathcal{M}}_{g,2g-j-2}$ forgets the last $j$ points.
\end{restatable}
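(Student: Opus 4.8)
The plan is to prove the stronger statement that, writing $Z_j:=\varphi_{j*}\overline{\PPP}(1^{2g-2})\subset\overline{\mathcal{M}}_{g,2g-2-j}$, any relation $[Z_j]=\sum_i a_i[Y_i]$ with $a_i>0$ and the $Y_i$ irreducible effective of codimension $g-1-j$ forces $Y_i=Z_j$ for every $i$; this gives rigidity and extremality in $\text{Eff}^{g-1-j}(\overline{\mathcal{M}}_{g,2g-2-j})$ at once. First I would check that $Z_j$ is a nonzero irreducible effective cycle: the principal stratum is irreducible of codimension $g-1$, a holomorphic differential vanishing at a prescribed set of $2g-2-j$ general points of a general curve is unique up to scale and has only finitely many choices of its remaining $j$ zeros, so $\varphi_j$ is generically finite on $\overline{\PPP}(1^{2g-2})$, and a general point of $Z_j$ lies in the interior.

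The argument is then inductive, organised as an induction on $g$ with all admissible $j$ treated together; the base case is $g=2$, where $Z_0=\overline{\PPP}(1,1)\subset\overline{\mathcal{M}}_{2,2}$ is the closure of the locus of pairs of points conjugate under the hyperelliptic involution, a rigid extremal divisor. For the inductive step I would restrict the class to a boundary divisor $\Delta\subset\overline{\mathcal{M}}_{g,2g-2-j}$ along which the genus drops by one — for instance $\Delta_{\mathrm{irr}}$, or the image of a gluing map attaching an elliptic tail — and use the description of limits of canonical divisors as twisted differentials to identify $Z_j\cap\Delta$, up to finite maps and product factors, with a pushforward of a genus $g-1$ stratum cycle $\overline{\PPP}(\kappa')$; since the $\kappa'$ that occur are meromorphic (a node forces a simple, or a double, pole, or a pair of opposite simple poles), this induction must be run jointly with the analysis of meromorphic strata in \S\ref{meromorphic}, the whole package ordered by genus and number of poles. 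Granting the inductive hypothesis, restricting $[Z_j]=\sum a_i[Y_i]$ to $\Delta$ pins down the restrictions $Y_i|_\Delta$; running this over several boundary divisors and pushing the relation forward under the forgetful morphisms $\overline{\mathcal{M}}_{g,2g-2-j}\to\overline{\mathcal{M}}_{g,2g-3-j}$ identifies each $Y_i$ set-theoretically with $Z_j$, hence with a multiple of it by irreducibility, after which a single transverse test curve in $Z_j$ fixes the coefficients. (An induction on $j$ instead — recognising $Z_j$ as a distinguished divisor inside the preimage of $Z_{j+1}$ under a forgetful morphism — seems attractive but runs into the kernel of pushforward on codimension-$(g-1-j)$ classes, so it would still need the boundary analysis above.)

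I expect two technical points to dominate the work. First, before the decomposition argument can even start one must know $[Z_j]$ is not a nonnegative combination of boundary classes, partial-diagonal classes, and classes pulled back along a forgetful morphism, and within the decomposition one must exclude $Y_i$ of these three types; such components are invisible to any single boundary restriction or single pushforward, which forces us to use enough boundary divisors and forgetful morphisms that the combined restriction-and-pushforward operator is injective on the span of every class that could occur — an injectivity in the spirit of Proposition~\ref{injective}, whose verification via the known structure of $N^1$ and the tautological relations on the moduli spaces in play is the real heart of the proof. Second, one must confirm that the boundary restriction of $Z_j$ genuinely is a finite cover of a pushforward of the claimed lower-genus stratum, with no extra components appearing over deeper boundary strata; this is exactly where the precise boundary geometry of the compactified strata of differentials enters.
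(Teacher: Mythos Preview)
Your proposal takes a much harder road than necessary, and in fact dismisses the correct approach. The paper's proof is precisely the induction on $j$ that you set aside in your parenthetical remark: the base case is the divisorial statement $j=g-2$, which is the Farkas--Verra result (Proposition~\ref{FV}), and the inductive step goes from $j+1$ down to $j$ using only the forgetful pushforwards $\pi_k:\overline{\mathcal{M}}_{g,2g-2-j}\to\overline{\mathcal{M}}_{g,2g-3-j}$ for $k=1,\dots,2g-2-j$.

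Your worry that this ``runs into the kernel of pushforward on codimension-$(g-1-j)$ classes'' is the gap. The point is that rigidity constrains \emph{supports}, not merely numerical classes. Given an effective decomposition $[Z_j]=\sum c_i[V_i]$ with each $V_i$ irreducible and not proportional to $[Z_j]$, push forward under some $\pi_k$: since the left side is nonzero, some $V_i$ has $\pi_{k*}[V_i]\ne 0$, and extremality plus rigidity of $Z_{j+1}$ force $V_i\subset\pi_k^{-1}(Z_{j+1})$ \emph{set-theoretically}. But a general point of $\pi_k^{-1}(Z_{j+1})$ has all coordinates except $p_k$ lying among the zeros of a holomorphic differential, so $\pi_{k'}$ is generically finite on this locus for every other $k'$; hence $\pi_{k'*}[V_i]\ne 0$ as well, and the same argument gives $V_i\subset\pi_{k'}^{-1}(Z_{j+1})$. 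Intersecting over all $k$ yields $V_i\subset Z_j$, a contradiction. No boundary analysis, no induction on $g$, no joint treatment with meromorphic strata, and no injectivity statement in the style of Proposition~\ref{injective} is needed here.

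Your proposed induction on $g$ via boundary restriction would, by contrast, require identifying $Z_j\cap\Delta$ for several boundary divisors $\Delta$ and controlling extra components over deeper strata --- exactly the two difficulties you flag and do not resolve --- and would entangle this theorem with the meromorphic results of \S\ref{meromorphic}, which in the paper are proved \emph{after} and \emph{by the same method as} Theorem~\ref{thm:Principal}. The base case you chose ($g=2$, $Z_0$) is also not the natural one: the induction runs in $j$ within each fixed $g$, anchored at the known divisor.
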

This extends the result of Farkas and Verra~\cite{FarkasVerraU} on divisors, which is used as the base case in an inductive argument similar to that used by Chen and Tarasca~\cite{ChenTarasca}. This method is developed to more complicated situations in later sections. Assume the cycle $[\varphi_{j+1}{}_*\overline{\PPP}(1^{2g-2}) ]$ is rigid and extremal and let
\begin{equation*}
[\varphi_{j}{}_*\overline{\PPP}(1^{2g-2}) ]=\sum c_i[V_i]
\end{equation*}
be an effective decomposition with $c_i>0$ and $V_i$ irreducible codimension one subvarieties distinct from $\varphi_{j}{}_*\overline{\PPP}(1^{2g-2}) $. Pushing forward under the map $\pi_m:\overline{\mathcal{M}}_{g,2g-j-2}\longrightarrow \overline{\mathcal{M}}_{g,2g-j-3}$ forgetting the $m$th marked point for $m=1,\dots,2g-j-2$ gives the assumed extremal cycle $[\varphi_{j+1}{}_*\overline{\PPP}(1^{2g-2})]$ and hence there is some $l$ with 
\begin{equation*}
\pi_m{}_*[V_l]=k[\varphi_{j+1}{}_*\overline{\PPP}(1^{2g-2})]
\end{equation*} 
for $k>0$. But as $\varphi_{j+1}{}_*\PPP(1^{2g-2})$ is rigid, this implies that $V_l$ is supported in $\pi_m^{-1}(\varphi_{j+1}{}_*\PPP(1^{2g-2}))$. Further, any such cycle must push forward under the map forgetting any of the marked points to give a non-zero cycle, which must then be proportional to the rigid and extremal cycle $ [\varphi_{j+1}{}_*\PPP(1^{2g-2})]$ and we obtain that $V_l$ is supported on
\begin{equation*}
\bigcap_{m=1}^{2g-2-j} \pi_m^{-1}(\varphi_{j+1}{}_*\PPP(1^{2g-2}))=\varphi_{j}{}_*\PPP(1^{2g-2})
\end{equation*}
providing a contradiction and proving the theorem.

In \S\ref{meromorphic} we use the methods of the previous section to obtain infinitely many rigid and extremal higher codimension cycles from meromorphic strata of canonical divisors of specified signatures.
\begin{restatable}{thm}{Meromorphic}
\label{meroextremal} 
For $g\geq2$ the cycle $[\varphi_j{}_*\overline{\PPP}(d_1,d_3,d_3,1^{2g-3})]$ for $j=0,\dots,g-1$ is extremal and rigid in $\mbox{Eff}^{\hspace{0.1cm}g-j}(\overline{\mathcal{M}}_{g,2g-j})$, where $\varphi_j:\overline{\mathcal{M}}_{g,2g}\longrightarrow\overline{\mathcal{M}}_{g,2g-j}$ forgets the last $j$ points, with $d_1+d_2+d_3=1$, $\sum_{d_i<0}d_i\leq-2$ and some $d_i=1$ if $g=2$. 
\end{restatable}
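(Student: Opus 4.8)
The plan is to induct downward on $j$ from the base case $j=g-1$, following the template used for Theorem~\ref{thm:Principal}. Write $\kappa=(d_1,d_2,d_3,1^{2g-3})$, so $\PPP(\kappa)\subset\mathcal{M}_{g,2g}$ has codimension $g$. When $j=g-1$, the class $[\varphi_{g-1}{}_*\overline{\PPP}(\kappa)]$ is a divisor class on $\overline{\mathcal{M}}_{g,g+1}$, and under the stated numerical hypotheses on $(d_1,d_2,d_3)$ --- including the requirement that some $d_i=1$ when $g=2$ --- this divisor is extremal and rigid by the author's results on effective divisors in~\cite{Mullane3}; this anchors the induction. Throughout we use that $\varphi_j{}_*\overline{\PPP}(\kappa)$ is the closure of the locus of $[C,p_1,\dots,p_{2g-j}]$, with $p_1,p_2,p_3$ of orders $d_1,d_2,d_3$ and $p_4,\dots,p_{2g-j}$ of order $1$, for which the class $D:=K_C-\sum_{i\le 3}d_ip_i-\sum_{k\ge 4}p_k$ (of degree $j$) is effective; this locus has codimension $g-j$ for $j\le g-1$. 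We also use that, because the order-$1$ marked points are interchangeable on $\overline{\PPP}(\kappa)$, forgetting any one of them from the $\varphi_j$-locus yields $\varphi_{j+1}{}_*\overline{\PPP}(\kappa)$ up to relabeling, and that at least two order-$1$ marked points are available at every stage of the induction --- this is exactly where the $g=2$ hypothesis is needed, providing a second order-$1$ point beyond the single $1$ in $\kappa$.

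For the inductive step, assume $[\varphi_{j+1}{}_*\overline{\PPP}(\kappa)]$ is extremal and rigid on $\overline{\mathcal{M}}_{g,2g-j-1}$ and let $[\varphi_j{}_*\overline{\PPP}(\kappa)]=\sum_i c_i[V_i]$ be an effective decomposition with $c_i>0$ and the $V_i$ irreducible of codimension $g-j$, each distinct from $\varphi_j{}_*\overline{\PPP}(\kappa)$. For $\pi_k:\overline{\mathcal{M}}_{g,2g-j}\to\overline{\mathcal{M}}_{g,2g-j-1}$ forgetting an order-$1$ point $p_k$, pushing the decomposition forward produces an effective decomposition of the extremal class $[\varphi_{j+1}{}_*\overline{\PPP}(\kappa)]$, so each $\pi_k{}_*[V_i]$ is a nonnegative multiple of it; by rigidity, whenever $\pi_k{}_*[V_i]\ne 0$ we get $\pi_k(V_i)=\varphi_{j+1}{}_*\overline{\PPP}(\kappa)$, hence $V_i\subseteq\pi_k^{-1}(\varphi_{j+1}{}_*\overline{\PPP}(\kappa))$. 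Since $\varphi_j{}_*\overline{\PPP}(\kappa)$ is itself not contracted by $\pi_k$, the pushforward of the left-hand side is nonzero, so some $V_l$ has $\pi_k{}_*[V_l]\ne 0$. Running this over every order-$1$ point $p_k$, and separately disposing of the $V_i$ contracted by some forgetful map --- such a $V_i$ is a pullback $\pi_k^{-1}(W_i)$, which is eliminated by pushing it forward under a different forgetful map --- one is left with a component $V_l$ lying in $\bigcap_k\pi_k^{-1}(\varphi_{j+1}{}_*\overline{\PPP}(\kappa))$, the intersection being over all order-$1$ points.

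It then remains to prove
\begin{equation*}
\bigcap_k\pi_k^{-1}\bigl(\varphi_{j+1}{}_*\overline{\PPP}(\kappa)\bigr)=\varphi_j{}_*\overline{\PPP}(\kappa),
\end{equation*}
which forces $V_l=\varphi_j{}_*\overline{\PPP}(\kappa)$, contradicting the choice of decomposition and establishing extremality; rigidity then follows by running the same chain of implications on an effective cycle whose class is a multiple of $[\varphi_j{}_*\overline{\PPP}(\kappa)]$. By the description above, $\pi_k^{-1}(\varphi_{j+1}{}_*\overline{\PPP}(\kappa))$ is the locus on which $D+p_k$ is effective, so the intersection is the locus on which the degree-$j$ class $D$ becomes effective after adding any single order-$1$ point. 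If $p_a\ne p_b$ are order-$1$ points with $D+p_a\sim E_a\ge 0$ and $D+p_b\sim E_b\ge 0$, then $E_a+p_b\sim E_b+p_a$; on a Brill--Noether general curve this degree-$(j+2)$ class --- note $j+2\le g$ in the inductive range $j\le g-2$ --- has a unique effective representative, so $E_a+p_b=E_b+p_a$ as divisors, whence $p_b\in\mathrm{supp}(E_b)$ and $D\sim E_b-p_b$ is effective. Thus the only component of the intersection dominating $\mathcal{M}_g$ is $\varphi_j{}_*\overline{\PPP}(\kappa)$.

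The main obstacle is to make this last identification complete: one must rule out components of $\bigcap_k\pi_k^{-1}(\varphi_{j+1}{}_*\overline{\PPP}(\kappa))$ of codimension $g-j$ that fail to dominate $\mathcal{M}_g$ --- for small $j$ such a component could a priori lie over the hyperelliptic locus or another locus where the Brill--Noether argument above does not directly apply --- and one must carry out carefully the parallel bookkeeping for the components $V_i$ contracted by a forgetful map; both require a finer analysis of the boundary behavior of $\overline{\PPP}(\kappa)$ and of its image under $\varphi_j$ than the outline above. A secondary point is that the base case must be cited in exactly the numerical range in which~\cite{Mullane3} establishes the divisor to be extremal and rigid, which is the origin of the extra hypothesis in genus $2$.
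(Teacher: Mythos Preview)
Your inductive scaffold is exactly the paper's, and your Brill--Noether argument for the identification $\bigcap_k\pi_k^{-1}(\varphi_{j+1}{}_*\overline{\PPP}(\kappa))=\varphi_j{}_*\overline{\PPP}(\kappa)$ on the interior is a cleaner justification than the paper's terse ``pairwise distinct hence distinct'' line in Proposition~\ref{mero1}. However, the gap you flag at the end is real and is precisely the content you are missing.

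Concretely, when there are only two order-$1$ marked points available---which happens for $g=2$ at $j=0$ (always, under the stated hypothesis) and for $g=3$ with all $d_i\ne1$ at $j=1$---the intersection $\bigcap_k\pi_k^{-1}(\varphi_{j+1}{}_*\overline{\PPP}(\kappa))$ genuinely has a second irreducible component
\[
X=\varphi_j^*\bigl(\varphi_{j+1}{}_*\overline{\PPP}(\kappa)\bigr)\cdot\delta_{0:\{a,b\}},
\]
where $\{a,b\}$ indexes the two order-$1$ points. Your Brill--Noether argument does not see $X$ because it sits entirely in the boundary (the two order-$1$ points have collided on a rational tail), so the displayed equality you ``remain to prove'' is actually false in these cases. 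The paper eliminates $V_l=X$ by a different mechanism (Propositions~\ref{mero2}, \ref{mero3}): push $[\varphi_j{}_*\overline{\PPP}(\kappa)]-c_l[V_l]$, which is effective, forward under $\pi_1$ (forgetting a point of order $d_1\ne1$) to obtain an effective divisor of the form $D^n_{\kappa'}-c\cdot\delta_{0:\{a-1,b-1\}}$, and then intersect with the moving curve $B^n_{\kappa',1,-1}$ from \S\ref{movingcurves}; Propositions~\ref{intg2} and~\ref{intg3} compute the relevant intersection to be strictly negative, contradicting effectivity. This moving-curve step is the substantive new input and is what your outline lacks.

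One further correction: you trace the hypothesis ``some $d_i=1$ if $g=2$'' to the base case in~\cite{Mullane3}, but Proposition~\ref{Mullane} holds for all $g\ge2$ without that restriction. The hypothesis is needed in the \emph{inductive} step so that at $j=0$ there are two order-$1$ points to forget (since $2g-3=1$); without it the argument cannot even start. And even with it, the extra component $X$ still appears for $g=2$ and must be dispatched as above.
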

This gives the following corollary on the structure of the effective cone.
\begin{restatable}{cor}{Cone}
\label{cor:Cone}
$\text{Eff}^k(\Mgnb)$ is not rational polyhedral for $g\geq 2$ and $k\leq \min(n-g,g)$.
\end{restatable}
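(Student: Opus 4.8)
The plan is to deduce Corollary~\ref{cor:Cone} from Theorem~\ref{meroextremal} together with a pullback argument along forgetful morphisms. Fix $g\geq 2$ and an integer $k$ with $1\leq k\leq\min(n-g,g)$, and set $j=g-k$, so $0\leq j\leq g-1$. For every integer $m\geq 2$ the triple $(d_1,d_2,d_3)=(1,m,-m)$ satisfies $d_1+d_2+d_3=1$, $\sum_{d_i<0}d_i=-m\leq-2$, and $d_1=1$, so Theorem~\ref{meroextremal} applies and produces a rigid and extremal effective cycle $Z_m:=[\varphi_{g-k}{}_*\overline{\PPP}(1,m,-m,1^{2g-3})]$ in $\overline{\mathcal{M}}_{g,g+k}$, of codimension $g-j=k$. (The meromorphic stratum $\overline{\PPP}(1,m,-m,1^{2g-3})$ has codimension $g$ in $\overline{\mathcal{M}}_{g,2g}$, and $\varphi_j$ is generically finite on it: after forgetting $j\leq g-1$ of the simple zeros, the forgotten points form an effective divisor lying in a fixed line bundle class of degree $j<g$, of which there are only finitely many generically. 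Distinct $m$ give pairwise non-proportional classes, as in~\cite{Mullane3}, since these cycles are distinguished already after pushing forward to divisor classes in $\overline{\mathcal{M}}_{g,g+1}$.) This settles the case $n=g+k$. For $n>g+k$ I would pull $Z_m$ back along the forgetful morphism $\pi\colon\overline{\mathcal{M}}_{g,n}\to\overline{\mathcal{M}}_{g,g+k}$; granting that $\pi^*Z_m$ is again rigid, extremal and effective of codimension $k$ (proved below) and that $\pi^*$ is injective on numerical groups, the $\pi^*Z_m$ span infinitely many distinct extremal rays in the finite-dimensional space $N^k(\overline{\mathcal{M}}_{g,n})$, so $\text{Eff}^k(\overline{\mathcal{M}}_{g,n})$ is not rational polyhedral.

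The heart of the matter is the assertion that the pullback of a rigid extremal effective cycle under a point-forgetting morphism is again rigid and extremal, which I would prove by induction, forgetting one marked point at a time. Let $\pi\colon\overline{\mathcal{M}}_{g,m+1}\to\overline{\mathcal{M}}_{g,m}$ forget the last point and let $Z$ be rigid and extremal of codimension $k\geq1$; then $\pi^*Z=\pi^{-1}(Z)$ is effective of codimension $k$. For extremality, write $[\pi^*Z]=\sum c_i[V_i]$ with $c_i>0$ and $V_i$ irreducible of codimension $k$. Since $\pi$ maps $\pi^{-1}(Z)$ onto $Z$, which has strictly smaller dimension, $\pi_*[\pi^*Z]=0$; hence $\sum c_i\pi_*[V_i]=0$ with each $\pi_*[V_i]$ effective, forcing $\pi_*[V_i]=0$ for all $i$. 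Because $\pi$ has one-dimensional fibers, $\pi_*[V_i]=0$ forces $\dim\pi(V_i)=\dim V_i-1$ and hence $V_i=\pi^{-1}(W_i)$ for $W_i:=\pi(V_i)$ irreducible of codimension $k$. Then $[\pi^*Z]=\pi^*\big(\sum c_i[W_i]\big)$, and injectivity of $\pi^*$ (valid because $\pi$ admits a tautological section) yields the effective decomposition $[Z]=\sum c_i[W_i]$; extremality of $Z$ makes each $[W_i]$ a nonnegative multiple of $[Z]$, whence $[V_i]=\pi^*[W_i]$ is a nonnegative multiple of $[\pi^*Z]$. The same reduction gives rigidity: any effective cycle representing $[\pi^*Z]$ has all components pulled back from $\overline{\mathcal{M}}_{g,m}$, hence descends to an effective cycle representing $[Z]$, which by rigidity of $Z$ is $Z$ itself, so the original cycle is $\pi^*Z$. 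Iterating over the $n-g-k$ points forgotten by $\overline{\mathcal{M}}_{g,n}\to\overline{\mathcal{M}}_{g,g+k}$ finishes the reduction.

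The main obstacle is exactly this pullback claim, and the insistence on forgetting one point at a time is essential rather than cosmetic: a forgetful morphism of relative dimension $r\geq2$ only guarantees $\dim\pi(V_i)<\dim V_i$, which does \emph{not} force $V_i=\pi^{-1}(W_i)$, so the clean conclusion is available only in relative dimension one. The remaining ingredients are standard and I would only record them: a nonzero effective cycle pairs positively with a suitable power of an ample class, hence cannot be numerically trivial, which is what lets one upgrade $\pi_*[V_i]=0$ as a class to $\dim\pi(V_i)<\dim V_i$; $\pi^*$ is injective because $\pi\circ\sigma=\mathrm{id}$ for a tautological section $\sigma$ of the universal curve; and flat pullback along these fibrations agrees with taking preimage cycles. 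Finally one should note the elementary remark that $N^k(\overline{\mathcal{M}}_{g,n})$ is finite dimensional and a rational polyhedral cone has only finitely many extremal rays, so exhibiting infinitely many of them indeed proves the statement.
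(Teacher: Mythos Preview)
Your reduction to $n=g+k$ via Theorem~\ref{meroextremal} is fine, and so is the observation that distinct $m$ yield non-proportional classes by pushing forward to the divisorial case. The genuine gap is in your pullback lemma. The step ``$\pi_*[V_i]=0$ forces $\dim\pi(V_i)=\dim V_i-1$ and hence $V_i=\pi^{-1}(W_i)$'' is not valid: you only get that $V_i$ is an irreducible \emph{component} of $\pi^{-1}(W_i)$. The fibres of the universal curve $\pi\colon\overline{\mathcal{M}}_{g,m+1}\to\overline{\mathcal{M}}_{g,m}$ are the stable curves themselves, so over any $W_i$ contained in a separating boundary stratum the generic fibre is reducible and $\pi^{-1}(W_i)$ has several components. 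Without $V_i=\pi^{-1}(W_i)$ you cannot write $[V_i]=\pi^*[W_i]$, and the descent to an effective decomposition of $[Z]$ collapses.

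In fact the general lemma you are aiming for is \emph{false}. Take $Z=\delta_{\{1,2\}}$ in $\overline{\mathcal{M}}_{0,5}$, a $(-1)$-curve and hence rigid and extremal. Then $\pi^*[\delta_{\{1,2\}}]=[\delta_{\{1,2\}}]+[\delta_{\{3,4,5\}}]$ in $\overline{\mathcal{M}}_{0,6}$, and these two boundary classes are not proportional: the F-curve associated to the partition $\{1\}\,|\,\{2\}\,|\,\{3\}\,|\,\{4,5,6\}$ meets $\delta_{\{1,2\}}$ with multiplicity $1$ and $\delta_{\{3,4,5\}}$ with multiplicity $0$. So $\pi^*Z$ is not extremal. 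The paper therefore does not attempt an abstract pullback statement; ``all previous arguments hold for the pullbacks'' means that one re-runs the inductive proof of Theorem~\ref{meroextremal} on the locus $\varphi^*\bigl(\varphi_{g-k}{}_*\overline{\PPP}(d_1,d_2,d_3,1^{2g-3})\bigr)\subset\overline{\mathcal{M}}_{g,n}$ directly: push forward by $\pi_l$ for the constrained indices $l\in\{4,\dots,g+k\}$ (or $l\in\{3,\dots,g+k\}$ when $g=3$) to land on the rigid and extremal pulled-back cycle of codimension $k-1$ in $\overline{\mathcal{M}}_{g,n-1}$, and then use the same geometric identification of $\bigcap_l\pi_l^{-1}(\cdots)$ as in the original proof, which is insensitive to the presence of the extra free marked points. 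That route avoids ever needing the false implication $V_i=\pi^{-1}(W_i)$.
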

The rigid and extremal divisors of~\cite{Mullane3} are used as a base case in the inductive proof that employs the inductive strategy of the proof of Theorem~\ref{thm:Principal}. One complication occurs in the case of $g=2$ and $3$ for the codimension two case. This method provides two candidates for the cycle $V_l$ discussed above. This complication is overcome by observing that if the second candidate appears in an effective decomposition with multiplicity $c_l>0$ then the cycle 
\begin{equation*}
[\varphi_{j}{}_*\overline{\PPP}(d_1,d_3,d_3,1^{2g-3})-c_lV_l]
\end{equation*}
is effective. Hence pushing this cycle forward under forgetful morphisms in the cases of interest results in an effective divisor which must have non-negative intersection with the covering curves introduced in \S\ref{movingcurves}. This provides the required contradiction.

In \S\ref{genusone} we examine the genus $g=1$ case. In this case the meromorphic strata of canonical divisors have codimension one and to produce rigid and extremal higher codimension cycles we intersect the pullbacks of strata under forgetful morphisms. Set $1\leq m\leq n-1$ and let $\underline{d}^j=(d_1^j,\dots,d_{n-m+1}^j)$ for $j=1,\dots,m$ be distinct non-zero integer partitions of zero. We define
\begin{equation*}
X(\underline{d}^1,\dots,\underline{d}^m):=\left\{[E,p_1,\dots,p_n]\in\mathcal{M}_{1,n}\hspace{0.15cm}\big| \hspace{0.15cm}[E,p_1,\dots,p_{n-m},p_{n-m+j}]\in \PPP(\underline{d}^j)   \right\}.
\end{equation*}
Under certain conditions, we can show irreducibility.
\begin{restatable}{prop}{Irred}
\label{irred}
$X(\underline{d}^1,\dots,\underline{d}^m)$ is irreducible if $\gcd(\underline{d}^1)=1$ and $d_{n-m+1}^j=1$ for $j=2,\dots,m$. 
\end{restatable}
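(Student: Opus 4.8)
The plan is to use the normalisation $d_{n-m+1}^j=1$ for $j\geq 2$ to identify $X(\underline{d}^1,\dots,\underline{d}^m)$ with a dense open subset of a single meromorphic stratum in genus one, and then to show that stratum is irreducible via a fibration over $\mathcal{M}_{1,1}$.

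First I would record that, since $\underline{d}^j$ is a partition of $0$ with $d_{n-m+1}^j=1$, the $j$th defining condition of $X$ reads $p_{n-m+j}\sim-\sum_{i=1}^{n-m}d_i^j p_i$, and the divisor on the right has degree $-\sum_{i=1}^{n-m}d_i^j=d_{n-m+1}^j=1$; on a smooth genus-one curve it is therefore linearly equivalent to a unique effective divisor, which is a single point. Hence for $j=2,\dots,m$ the point $p_{n-m+j}$ is determined by $(E,p_1,\dots,p_{n-m})$, so the forgetful morphism $\rho\colon\mathcal{M}_{1,n}\to\mathcal{M}_{1,n-m+1}$ dropping the last $m-1$ points sends $X$ into $\PPP(\underline{d}^1)$ (the only surviving condition involves just $p_1,\dots,p_{n-m+1}$), is injective on $X$, and has a rational inverse over $\PPP(\underline{d}^1)$ given by re-adjoining the determined points. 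That inverse is a morphism away from the locus where a re-adjoined point coincides with a retained point or with another re-adjoined point; each such coincidence is itself a stratum-type condition, and using that the $\underline{d}^j$ are pairwise distinct with common last entry $1$ one checks that none of them contains $\PPP(\underline{d}^1)$, so $\rho$ restricts to an isomorphism of $X$ onto a dense open subset of $\PPP(\underline{d}^1)$. It therefore suffices to prove: for every partition $\underline{d}=(d_1,\dots,d_k)$ of $0$ with $k\geq 3$ and $\gcd(\underline{d})=1$, the stratum $\PPP(\underline{d})\subset\mathcal{M}_{1,k}$ is irreducible.

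For this last claim I would pull back along the dominant map $\mathcal{U}\to\mathcal{M}_{1,k}$, where $\mathcal{U}\subset\mathcal{E}^{k-1}$ is the open locus of fibrewise-distinct points in the $(k-1)$-fold fibre power of the universal curve $\mathcal{E}\to\mathcal{M}_{1,1}$ and the map normalises $p_1$ to the zero section; since a variety dominated by an irreducible variety is irreducible, it is enough that the preimage $\widetilde{\PPP}\subset\mathcal{U}$ of $\PPP(\underline{d})$ be irreducible. Over a fixed $E$ this preimage is the locus $\{\,\sum_{i=2}^k d_i p_i=O\,\}$ in $E^{k-1}$ with the coincidence locus removed; after an integral change of coordinates the homomorphism $(p_2,\dots,p_k)\mapsto\sum_{i=2}^k d_i p_i$ becomes $(q_2,\dots,q_k)\mapsto\delta q_2$ with $\delta=\gcd(d_2,\dots,d_k)$, so its kernel is $E[\delta]\times E^{k-2}$, connected exactly when $\delta=1$. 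The decisive point is that $\sum_{i=1}^k d_i=0$ forces $\delta\mid d_1$ and hence $\delta\mid\gcd(\underline{d})=1$; so the kernel is $E^{k-2}$, and removing the proper coincidence locus leaves an irreducible fibre of dimension $k-2$ for general $E$. As this homomorphism is surjective for every $E$ there is no dimension jump, so $\widetilde{\PPP}\to\mathcal{M}_{1,1}$ has irreducible base and irreducible general fibre of the expected dimension; therefore $\widetilde{\PPP}$, and with it $\PPP(\underline{d})$ and $X$, is irreducible.

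The step I expect to be the main obstacle is the bookkeeping in the reduction: one must verify that re-adjoining the $m-1$ forced points really yields an isomorphism onto a dense open subset of $\PPP(\underline{d}^1)$, i.e. that no forced point is constant or is compelled to collide with a retained point or another forced point, and it is exactly here that the pairwise distinctness of the $\underline{d}^j$ and the normalisation $d_{n-m+1}^j=1$ enter. (The concluding irreducibility claim for $\PPP(\underline{d})$ is essentially classical and may alternatively be quoted from the genus-one analysis in~\cite{ChenCoskun}.)
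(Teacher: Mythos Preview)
Your proposal is correct and follows essentially the same approach as the paper: both reduce to the forgetful morphism $\rho$ dropping the last $m-1$ points, observe that the condition $d_{n-m+1}^j=1$ forces each $p_{n-m+j}$ to be uniquely determined by the group law, and conclude that $X$ is birational onto a dense open of the irreducible stratum $\PPP(\underline{d}^1)$. The only difference is that the paper simply asserts irreducibility of $\PPP(\underline{d}^1)$ from $\gcd(\underline{d}^1)=1$ (citing the known genus-one classification), whereas you supply a self-contained proof via the fibration over $\mathcal{M}_{1,1}$ and the Smith-normal-form computation of the kernel; this extra argument is correct and a nice addition, but not a genuinely different route.
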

By the inductive strategy of earlier sections we obtain.
\begin{restatable}{thm}{Genusone}
\label{thm:genusone}
Let $m\geq 1$ and $n\geq m+2$, then $\overline{X}(\underline{d}^1,\dots,\underline{d}^m)$ is rigid and extremal in $\mbox{Eff}^{\hspace{0.1cm}m}(\overline{\mathcal{M}}_{1,n})$, if $\gcd(\underline{d}^1)=1$ and $d_{n-m+1}^j=1$ for $j=2,\dots,m$.
\end{restatable}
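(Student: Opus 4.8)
The plan is to argue by induction on $m$, following the inductive strategy of the proofs of Theorems~\ref{thm:Principal} and~\ref{meroextremal}. The base case is $m=1$: then the hypothesis $n\geq m+2$ reads $n\geq 3$, the second condition is vacuous, and $\overline{X}(\underline{d}^1)=\overline{\PPP}(\underline{d}^1)$ is a meromorphic stratum of canonical divisors on $\overline{\mathcal{M}}_{1,n}$, which Chen and Coskun~\cite{ChenCoskun} proved to be rigid and extremal once $\gcd(\underline{d}^1)=1$.

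For the inductive step, assume the theorem for $m-1$ in every $\overline{\mathcal{M}}_{1,n'}$ with $n'\geq(m-1)+2$, and suppose
\[
[\overline{X}(\underline{d}^1,\dots,\underline{d}^m)]=\sum_i c_i[V_i]
\]
is an effective decomposition with $c_i>0$ and the $V_i$ irreducible codimension-$m$ subvarieties of $\overline{\mathcal{M}}_{1,n}$ distinct from $\overline{X}(\underline{d}^1,\dots,\underline{d}^m)$; we seek a contradiction. For $j=1,\dots,m$ let $\pi_{n-m+j}\colon\overline{\mathcal{M}}_{1,n}\to\overline{\mathcal{M}}_{1,n-1}$ forget the point $p_{n-m+j}$. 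Forgetting $p_{n-m+j}$ destroys exactly the $j$th defining condition while leaving the other $m-1$ intact, and since multiplication by a nonzero integer is surjective on an elliptic curve, $\pi_{n-m+j}$ maps $\overline{X}(\underline{d}^1,\dots,\underline{d}^m)$ onto $\overline{X}(\underline{d}^1,\dots,\widehat{\underline{d}^j},\dots,\underline{d}^m)\subset\overline{\mathcal{M}}_{1,n-1}$, generically finitely (and birationally when $d_{n-m+1}^j=1$). The hypotheses of the theorem, equivalently of Proposition~\ref{irred}, are inherited by the tuple with $\underline{d}^j$ removed, and $n-1\geq(m-1)+2$, so by the inductive hypothesis each such image is rigid and extremal, and by Proposition~\ref{irred} it is irreducible.

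Pushing the decomposition forward under $\pi_n$ and using extremality of $[\overline{X}(\underline{d}^1,\dots,\underline{d}^{m-1})]$, each $\pi_n{}_*[V_i]$ is a nonnegative multiple of it; since the total is a positive multiple, some $V_l$ has $\pi_n{}_*[V_l]$ a positive multiple of $[\overline{X}(\underline{d}^1,\dots,\underline{d}^{m-1})]$, and rigidity forces $V_l$ to be supported in $\pi_n^{-1}(\overline{X}(\underline{d}^1,\dots,\underline{d}^{m-1}))$, exactly as in Theorem~\ref{thm:Principal}. On this preimage the first $m-1$ conditions hold, so each of $p_{n-m+1},\dots,p_{n-1}$ is constrained to finitely many positions by the corresponding condition, whence $\pi_{n-m+j}{}_*[V_l]\neq0$ for $j=1,\dots,m-1$ as well; running the extremality-then-rigidity step again gives $V_l\subseteq\pi_{n-m+j}^{-1}(\overline{X}(\underline{d}^1,\dots,\widehat{\underline{d}^j},\dots,\underline{d}^m))$ for every $j=1,\dots,m$. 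Over the interior $\mathcal{M}_{1,n}$, lying in all $m$ of these preimages forces every one of the $m$ group-law conditions (using $m\geq2$), so the intersection meets $\mathcal{M}_{1,n}$ in precisely $X(\underline{d}^1,\dots,\underline{d}^m)$, which is irreducible of codimension $m$ by Proposition~\ref{irred}. Hence $V_l\cap\mathcal{M}_{1,n}\subseteq X(\underline{d}^1,\dots,\underline{d}^m)$ and, both being irreducible of codimension $m$, $V_l=\overline{X}(\underline{d}^1,\dots,\underline{d}^m)$, a contradiction. This shows the only irreducible cycle appearing in any effective decomposition of $[\overline{X}(\underline{d}^1,\dots,\underline{d}^m)]$ is the cycle itself, that is, it is rigid and extremal.

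The main obstacle is the boundary: the argument above tacitly assumes $V_l$ meets the interior, and this must be justified by excluding a spurious component of $\bigcap_j\pi_{n-m+j}^{-1}(\overline{X}(\dots,\widehat{\underline{d}^j},\dots))$ supported in $\partial\overline{\mathcal{M}}_{1,n}$. Such a component is possible in principle: a boundary divisor $\delta_{0,\{a,n\}}$ is carried isomorphically onto $\overline{\mathcal{M}}_{1,n-1}$ by $\pi_n$, so a codimension-$m$ cycle inside it can have the correct pushforward. Excluding it is the analogue of the two-candidate complication for $V_l$ in the $g=2,3$ cases of Theorem~\ref{meroextremal}, and I expect to handle it the same way: $[\overline{X}(\underline{d}^1,\dots,\underline{d}^m)-c_lV_l]$ is effective, so pushing it forward along a suitable chain of forgetful morphisms to an effective divisor and pairing with the covering curves of \S\ref{movingcurves} yields the contradiction. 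Here the geometry is somewhat simpler because the $m$ defining conditions involve pairwise disjoint sets of marked points, so no further candidates for $V_l$ arise from the interior, and only the boundary requires care.
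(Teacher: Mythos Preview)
Your inductive approach is correct in substance and parallels Theorems~\ref{thm:Principal} and~\ref{meroextremal}, but it differs from the paper's argument. The paper does \emph{not} induct on $m$: instead of forgetting one point $p_{n-m+j}$ at a time and landing on a codimension-$(m-1)$ cycle, it uses the maps
\[
\vartheta_j:\overline{\mathcal{M}}_{1,n}\longrightarrow\overline{\mathcal{M}}_{1,n-m+1}
\]
which forget all of $p_{n-m+1},\dots,p_n$ except $p_{n-m+j}$, so that $\vartheta_j{}_*[\overline{X}(\underline{d}^1,\dots,\underline{d}^m)]$ is already a multiple of the \emph{divisor} $[\overline{\PPP}(\underline{d}^j)]$. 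Rigidity and extremality of these divisors (Proposition~\ref{ChenCoskun}) then force any $V_i$ with nonzero pushforward into $\bigcap_j\vartheta_j^{-1}\overline{\PPP}(\underline{d}^j)$, which over the interior is exactly $X(\underline{d}^1,\dots,\underline{d}^m)$. This is shorter: it bypasses the inductive hypothesis entirely and needs no check that the conditions are inherited by subtuples. Your approach, on the other hand, stays closer to the template of \S\ref{principal}--\S\ref{meromorphic} and makes the passage from one codimension to the next explicit, which is conceptually uniform with the rest of the paper.

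On the boundary obstacle you raise: the paper's proof does not address it either; it simply asserts that containment in $\bigcap_j\vartheta_j^{-1}\overline{\PPP}(\underline{d}^j)$ forces $V_i\subset\overline{X}(\underline{d}^1,\dots,\underline{d}^m)$. Your instinct to worry about spurious boundary components is reasonable, but your proposed fix via the moving curves of \S\ref{movingcurves} does not apply as written, since Proposition~\ref{covering} and the curves $B^n_{\kappa,1,-1}$ are stated for $g\geq 2$. In practice a dimension count suffices: if $V_l$ lies in a boundary divisor $\delta_{0,S}$, then for all but at most one $j$ the image $\pi_{n-m+j}(V_l)$ remains in the boundary, where it would have to sit inside $\overline{X}(\dots,\widehat{\underline{d}^j},\dots)\cap\partial\overline{\mathcal{M}}_{1,n-1}$, a locus of dimension at most $n-m-1$, contradicting $\pi_{n-m+j}{}_*[V_l]\neq 0$. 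So the obstacle is milder than you suggest, and no moving-curve argument is needed.
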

This immediately gives the following corollary on the structure of the effective cones.
\begin{restatable}{cor}{Genusonecor}
\label{genusonecor}
The effective cone of codimension $k$ cycles in $\overline{\mathcal{M}}_{1,n}$ is not rational polyhedral for $k\leq n-2$.
\end{restatable}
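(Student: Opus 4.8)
The corollary is essentially formal once Theorem~\ref{thm:genusone} is in hand, so the plan is to derive it by producing, for each $k$ in the range $1\leq k\leq n-2$, infinitely many pairwise non-proportional rigid and extremal effective cycles of codimension $k$ in $\overline{\mathcal{M}}_{1,n}$. Since a rational polyhedral cone has only finitely many extremal rays, the existence of infinitely many distinct extremal rays forces $\text{Eff}^k(\overline{\mathcal{M}}_{1,n})$ to be non-polyhedral. (For $k=0$ the cone is a single ray, so all the content lies in the stated range.)

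First I would set $m=k$, so that the hypotheses $m\geq 1$ and $n\geq m+2$ of Theorem~\ref{thm:genusone} read exactly $1\leq k\leq n-2$. The locus $X(\underline{d}^1,\dots,\underline{d}^m)$ is defined by imposing $m$ conditions, each the pullback under a forgetful morphism of a codimension-one stratum $\PPP(\underline{d}^j)$, and its closure is an irreducible codimension-$k$ subvariety of $\overline{\mathcal{M}}_{1,n}$ by Proposition~\ref{irred} (the $m$ defining conditions being independent). Under the $\gcd$ and normalization hypotheses, Theorem~\ref{thm:genusone} tells us each such $\overline{X}$ is rigid and extremal.

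Next I would produce the infinite family. Because $n\geq m+2$ we have $n-m+1\geq 3$, which leaves enough free entries to choose infinitely many pairwise non-proportional integer partitions of zero of length $n-m+1$ with $\gcd$ equal to $1$; concretely one may keep $\underline{d}^2,\dots,\underline{d}^m$ (each with last entry $1$) and all but two of the entries of $\underline{d}^1$ fixed, and vary the remaining two entries of $\underline{d}^1$ subject to their sum being constant. Writing $\overline{X}_\alpha$ for the resulting cycles, each is irreducible of dimension $\dim\overline{\mathcal{M}}_{1,n}-k$ by Proposition~\ref{irred}, and the $\overline{X}_\alpha$ are pairwise distinct: a general point of $\overline{X}_\alpha$ satisfies the condition cut out by $\PPP(\underline{d}^1_\alpha)$ but not the one cut out by $\PPP(\underline{d}^1_\beta)$ for $\beta\neq\alpha$, since the strata $\PPP(\underline{d}^1_\alpha)$ in $\mathcal{M}_{1,n-m+1}$ are pairwise distinct for non-proportional signatures.

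The last point, and the only one requiring genuine care, is that the classes $[\overline{X}_\alpha]$ span \emph{pairwise distinct} extremal rays of $\text{Eff}^k(\overline{\mathcal{M}}_{1,n})$, rather than merely lying on extremal rays. Suppose $[\overline{X}_\alpha]=c\,[\overline{X}_\beta]$ with $\alpha\neq\beta$; interchanging the two indices if necessary we may assume $0<c\leq 1$. Then $\overline{X}_\alpha$ is an effective cycle of class $c[\overline{X}_\beta]$, so rigidity of $\overline{X}_\beta$ — applied exactly as in the proof of Theorem~\ref{thm:Principal} — forces $\overline{X}_\alpha$ to be supported on $\overline{X}_\beta$, and as both are irreducible of the same dimension this gives $\overline{X}_\alpha=\overline{X}_\beta$, a contradiction. (Alternatively, the numerical classes can be separated directly by intersecting with suitable test curves.) Hence the cycles $\overline{X}_\alpha$, which are extremal by Theorem~\ref{thm:genusone}, determine infinitely many distinct extremal rays, so $\text{Eff}^k(\overline{\mathcal{M}}_{1,n})$ is not rational polyhedral.
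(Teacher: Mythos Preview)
Your proof is correct and follows the same overall strategy as the paper: invoke Theorem~\ref{thm:genusone} with $m=k$ to produce infinitely many rigid and extremal codimension-$k$ cycles, then argue that their classes are pairwise non-proportional so that they span distinct extremal rays.

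The only genuine difference is in how you separate the numerical classes. The paper pushes forward under $\vartheta_j$ to land on the divisors $D^{n-k+1}_{\underline{d}^j}$ and then cites the Chen--Coskun result (Proposition~\ref{ChenCoskun}) that these divisor classes are non-proportional. You instead argue directly from rigidity: if $[\overline{X}_\alpha]$ and $[\overline{X}_\beta]$ were proportional, rigidity of $\overline{X}_\beta$ would force $\overline{X}_\alpha\subset\overline{X}_\beta$, hence equality. This is a clean alternative that avoids re-invoking the divisor computation, since rigidity is already part of the conclusion of Theorem~\ref{thm:genusone}. One small point worth making explicit in your write-up: the proportionality constant $c$ is automatically rational (both classes lie in the image of the integral Chow group), so you can clear denominators before applying the integer-multiple form of rigidity stated in the paper.
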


\section{Preliminaries}

\subsection{Subvarieties from the strata of canonical divisors}
The subvarieties of interest in this paper are the \emph{strata of canonical divisors with signature $\kappa$} defined as
\begin{equation*}
\PPP(\kappa):=\{[C,p_1,...,p_n]\in \Mgn   \hspace{0.15cm}| \hspace{0.15cm}k_1p_1+...+k_np_n\sim K_C\}.
\end{equation*}
The codimension of $\PPP(\kappa)$ is $g-1$ for $\kappa$  holomorphic  (all $k_i\geq0$)  and $g$ for $\kappa$ meromorphic (some $k_i<0$). 

We obtain many interesting subvarieties of lower codimension by pushing forward under the forgetful morphisms forgetting marked points. Let $\varphi_S:\Mgnb\longrightarrow \overline{\mathcal{M}}_{g,n-|S|}$ for $S\subseteq\{1,...,n\}$ be the map that forgets the marked points indexed by $S$. For ease of notation we will let $\varphi_j$ denote the map that forgets the last $j$ points, that is, $\varphi_j=\varphi_S$ for $S=\{n-j+1,...,n\}$. Further, we will let $\pi_j$ denote the map that forgets only the $j$th point, that is, $\pi_j=\varphi_S$ for $S=\{j\}$.

Despite many remaining interesting questions, the codimension one case is well studied. We obtain a codimension one subvariety or divisor from $\overline{\PPP}(\kappa)$ in the moduli space of marked genus $g$ curves by forgetting marked points.
The divisor $D^n_\kappa$  in $\Mgnb$ for $\kappa=(k_1,...,k_{n+s})$ with $\sum k_i=2g-2$ is defined as 
\begin{equation*}
D^n_\kappa=\overline{\{[C,p_1,...,p_n]\in\Mgn\hspace{0.15cm}|\hspace{0.15cm} [C,p_1,...,p_{n+s}]\in\mathcal{M}_{g,n+s} \text{ with  }  \sum k_ip_i\sim K_C   \}},
\end{equation*}
where $s=g-2$ or $g-1$ for holomorphic and meromorphic signature $\kappa $ respectively. Hence $D^n_\kappa$ is proportional to $[\varphi_s{}_*\overline{\PPP}(\kappa)]$.

\subsection{Degeneration of differentials}
A stable pointed curve $[C,p_1,...,p_n]\in\Mgnb$ is contained in $\widetilde{\PPP}(\kappa)$, the moduli space of twisted canonical divisors of type $\kappa=(k_1,...,k_n)$ as defined by Farkas and Pandharipande~\cite{FP} if there exists a collection of (possibly meromorphic) divisors $D_j\sim K_{C_j}$ on each irreducible component $C_j$ of $C$ such that
\begin{enumerate}
\item
The support of $D_j$ contains the set of marked points and the nodes lying in $C_j$, moreover if $p_i\in C_j$ then $\ord_{p_i}(D_j)=k_i$.
\item
If $q$ is a node of $C$ and $q\in C_i\cap C_j$ then $\ord_q(D_i)+\ord_q(D_j)=-2$.
\item
If $q$ is a node of $C$ and $q\in C_i\cap C_j$ such that $\ord_q(D_i)=\ord_q(D_j)=-1$ then for any $q'\in C_i\cap C_j$ we have $\ord_{q'}(D_i)=\ord_{q'}(D_j)=-1$. We write $C_i\sim C_j$.
\item
If $q$ is a node of $C$ and $q\in C_i\cap C_j$ such that $\ord_q(D_i)>\ord_q(D_j)$ then for any $q'\in C_i\cap C_j$ we have $\ord_{q'}(D_i)>\ord_{q'}(D_j)$. We write $C_i\succ C_j$.
\item
There does not exist a directed loop $C_1\succeq C_2\succeq...\succeq C_k\succeq C_1$ unless all $\succeq$ are $\sim$.
\end{enumerate}  
Farkas and Pandharipande showed that in addition to the main component $\overline{\PPP}(\kappa)$ containing $\PPP(\kappa)$, this space contained extra components completely contained in the boundary of the moduli space.
Bainbridge, Chen, Gendron, Grushevsky and M\"oller~\cite{BCGGM} provided the condition that a twisted canonical divisor lies in the main component. Let $\Gamma$ be the dual graph of $C$. A twisted canonical divisor of type $\kappa$ is the limit of twisted canonical divisors on smooth curves if there exists a collection of meromorphic differentials $\omega_i$ on $C_i$ with $(\omega_i)=D_i$ that satisfy the following conditions
\begin{enumerate}
\item
If $q$ is a node of $C$ and $q\in C_i\cap C_j$ such that $\ord_q(D_i)=\ord_q(D_j)=-1$ then $\res_q(\omega_i)+\res_q(\omega_j)=0$.
\item
There exists a full order on the dual graph $\Gamma$, written as a level graph $\overline{\Gamma}$, agreeing with the order of $\sim$ and $\succ$, such that for any level $L$ and any connected component $Y$ of  $\overline{\Gamma}_{>L}$ that does not contain a prescribed pole we have
\begin{equation*}
\sum_{\begin{array}{cc}\text{level}(q)=L, \\q\in C_i\subset Y\end{array}}\res_{q}(\omega_i)=0
\end{equation*} 
\end{enumerate}
Part (b) is known as the \emph{global residue condition}.

\subsection{Rigid and extremal cycle classes}

For a projective variety $X$, let $N^k(X)$ denote the $\RR$-vector space of codimension-$k$ cycles modulo numerical equivalence. The cycles in $N^k(X)$  that can be written as a positive sum of effective cycles form a convex cone inside $N^k(X)$ known as the \emph{effective cone} of codimension-$k$ cycles denoted $\text{Eff}^k(X)$.

An effective codimension-$k$ cycle $Y$ is \emph{extremal} or spans an extremal ray in the effective cone if the class $Y$ cannot be written as a sum $m_1Y_1+m_2Y_2$ of effective $Y_i$ with $m_1,m_2>0$ unless $Y, Y_1$ and $Y_2$ are all proportional classes. An effective cycle $Y$ is \emph{rigid} if every cycle with class $mY$ is supported on the support of $Y$ for every positive integer $m$.

The codimension one case is special. A curve $B$ contained in an effective divisor $D$ is known as a \emph{covering curve} for $D$ if irreducible curves with numerical class equal to $B$ cover a Zariski  dense subset of $D$. Negative intersection by a covering curve is a well-known criterion for an irreducible effective divisor to be extremal and rigid.

\subsection{Rigid and extremal divisors}
In this section we collect the known results on rigid and extremal divisor classes relevant to our later arguments. 

The closure of the locus of $g$ points on general genus $g$ curves that sit in a hyperplane section of the canonical embedding form a divisor in $\overline{\mathcal{M}}_{g,g}$. The class of this divisor was first calculated by Logan~\cite{Logan} who used it to investigate the Kodaira dimension of $\Mgnb$. From our perspective this divisor is 
\begin{equation*}
[D^g_{1^{2g-2}}]=\frac{1}{(g-2)!}[\varphi_{g-2}{}_*\overline{\PPP}(1^{2g-2})].
\end{equation*}
Kontsevich and Zorich~\cite{KontsevichZorich} showed ${\PPP}(1^{2g-2})$ to be irreducible and hence $\varphi_j{}_*\overline{\PPP}(1^{2g-2})$ is irreducible for $j=0,...,g-2$. In the divisorial case, or the case $j=g-2$,  Farkas and Verra~\cite{FarkasVerraU} further showed that this divisor is rigid and extremal through the construction of a covering curve with negative intersection. 

\begin{prop}[\cite{FarkasVerraU}]\label{FV}
$D^g_{1^{2g-2}}$ is a rigid and extremal divisor in $\overline{\mathcal{M}}_{g,g}$ for all $g\geq 2$.
\end{prop}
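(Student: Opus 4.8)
The plan is to deduce rigidity and extremality from the covering-curve criterion recalled above. Since $D^g_{1^{2g-2}}=\frac{1}{(g-2)!}\varphi_{g-2}{}_*\overline{\PPP}(1^{2g-2})$ is irreducible --- irreducibility of $\PPP(1^{2g-2})$ is due to Kontsevich--Zorich and is preserved under pushforward by forgetful morphisms --- it suffices to produce an irreducible curve $B\subset D^g_{1^{2g-2}}$ that moves in an algebraic family sweeping out a dense subset of $D^g_{1^{2g-2}}$ and satisfies $B\cdot D^g_{1^{2g-2}}<0$. Indeed, in any expression $[D^g_{1^{2g-2}}]=[Y_1]+[Y_2]$ with $Y_i$ effective, some $[Y_i]\cdot B<0$; an irreducible curve in the moving family not contained in the support of $Y_i$ would meet $Y_i$ non-negatively, so $D^g_{1^{2g-2}}$, being the closure of the locus these curves sweep out, lies in the support of $Y_i$, and irreducibility then forces $[D^g_{1^{2g-2}}]$, $[Y_1]$, $[Y_2]$ to be proportional; applying the same argument to $m[D^g_{1^{2g-2}}]$ gives rigidity.

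For the covering curve I would use pencils residual to fixed points in the canonical system. Fix a general $[C]\in\Mg$ and general points $q_1,\dots,q_{g-2}\in C$; by Riemann--Roch $h^0(C,K_C-q_1-\cdots-q_{g-2})=2$, so $\ell=|K_C-q_1-\cdots-q_{g-2}|$ is a pencil $g^1_g$, base-point-free and simple for general choices, with associated degree-$g$ map $f\colon C\to\PP^1$ having only simple ramification. Each member $D_t$ of $\ell$ satisfies $D_t+q_1+\cdots+q_{g-2}\sim K_C$, so $D_t$ together with the fixed points $q_1,\dots,q_{g-2}$ forms a canonical divisor of type $1^{2g-2}$ whenever $D_t$ is reduced. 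Passing to the ordering cover $B\to\PP^1$ of degree $g!$ --- irreducible since the monodromy of a general pencil is the full symmetric group --- labelling the $g$ points of the fibre by $p_1,\dots,p_g$, and taking stable limits over the $4g-2$ simple ramification fibres of $f$, where exactly two marked points collide and a rational bridge sprouts, produces a family of $g$-pointed stable curves whose moduli map $B\to\overline{\mathcal{M}}_{g,g}$ factors through $D^g_{1^{2g-2}}$. A dimension count --- $3g-3$ parameters for $C$, $g-2$ for the points $q_j$, and one more along the pencil, totalling $4g-4=\dim D^g_{1^{2g-2}}$ --- together with the observation that a general $[C,p_1,\dots,p_g]\in D^g_{1^{2g-2}}$ lies on the curve $B$ built from the unique effective divisor $q_1+\cdots+q_{g-2}\sim K_C-p_1-\cdots-p_g$, shows that these curves $B$ dominate $D^g_{1^{2g-2}}$.

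It then remains to evaluate $B\cdot D^g_{1^{2g-2}}$ against the known expression, due to Logan, for $[D^g_{1^{2g-2}}]$ as a combination of $\lambda$, the $\psi_i$ and the boundary classes. Here $B\cdot\lambda=0$ because the underlying curve $C$ is constant along $B$; $B$ meets the boundary only along the divisors $\Delta_{0,\{i,j\}}$ of colliding marked points, with intersection numbers dictated by Riemann--Hurwitz for $f$ (the $4g-2$ ramification points, distributed over the pairs $\{i,j\}$ by the symmetric-group monodromy); and each $B\cdot\psi_i$ is the self-intersection on the blow-up $\mathcal{B}\to B\times C$ of the section tracking the $i$-th point of the fibre, read off from its degree over $C$ and the number of blown-up points it meets. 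Substituting these into Logan's formula produces the intersection number, and I expect this last evaluation to be the main obstacle: since the $\psi_i$ enter Logan's class with positive coefficients and $B\cdot\psi_i>0$, the required negativity has to come out of the boundary contribution, which needs the precise boundary coefficients of Logan's formula and a careful Riemann--Hurwitz accounting for $f$; by contrast the construction of the covering family, its irreducibility, and the reduction to the criterion are routine.
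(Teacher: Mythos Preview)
Your approach is exactly the one the paper invokes: the paper's proof is simply a citation to Farkas--Verra~\cite{FarkasVerraU}, whose argument is precisely the covering-curve construction you outline---residual pencils $|K_C-q_1-\cdots-q_{g-2}|$ on a fixed general curve, giving a family sweeping out $D^g_{1^{2g-2}}$, together with a direct intersection against Logan's class. Your sketch is accurate, including the identification of the final numerical check as the only nontrivial step; that computation is carried out in~\cite{FarkasVerraU} and indeed comes out negative.
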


On an elliptic curve $E$ the structure sheaf and canonical bundle coincide. Through the use of covering curves with negative intersection, Chen and Coskun~\cite{ChenCoskun} showed that the condition that points on an elliptic curve satisfy certain equations under the group law formed rigid and extremal divisors in $\overline{\mathcal{M}}_{1,n}$ for $n\geq 3$. From our perspective we state these results in the following proposition.

\begin{prop}[\cite{ChenCoskun}]\label{ChenCoskun}
The divisors $D^{n}_{d_1,\dots,d_n}$ are rigid and extremal in $\overline{\mathcal{M}}_{1,n}$ for
$\gcd(d_1,\dots,d_n)=1$ and $n\geq 3$.
\end{prop}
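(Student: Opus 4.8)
The plan is to reprove this following Chen--Coskun, by exhibiting for every admissible signature a covering curve $B$ of the irreducible divisor $\overline{D}:=D^n_{d_1,\dots,d_n}$ with $B\cdot\overline{D}<0$ and then invoking the criterion from the preliminaries, by which negative intersection with a covering curve makes an irreducible effective divisor rigid and extremal. The irreducibility of $\overline{D}$ will come from the following remark: since $\sum d_i=2g-2=0$, the divisor $\overline{D}$ is the closure in $\overline{\mathcal{M}}_{1,n}$ of the locus of $[E,p_1,\dots,p_n]$ with $d_1p_1+\dots+d_np_n\sim\mathcal{O}_E$, and on a fixed $E$ this locus is the complement of the diagonals in a fiber of the homomorphism $E^{n}\to E$, $(p_i)\mapsto\sum d_ip_i$; the hypothesis $\gcd(d_1,\dots,d_n)=1$ forces this homomorphism to have connected kernel, so the locus is irreducible and remains so as $E$ varies.

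To construct $B$, first relabel the marked points so that $d_1+d_2\neq 0$; this is possible since $n\geq 3$ and $d_i+d_j=0$ for every pair would force all $d_i=0$, contradicting $\gcd(d_1,\dots,d_n)=1$. Now fix a general $(E,p_3,\dots,p_n)$, let $p_1$ range over $E$, and let $p_2=p_2(p_1)$ be a solution of $d_1p_1+d_2p_2\sim-\sum_{i\geq 3}d_ip_i$; an irreducible component of the resulting finite cover of $E$ is a curve $B\subset\overline{D}$. Every general point of $\overline{D}$ lies on one such curve — freeze $E$ and $p_3,\dots,p_n$ to be the corresponding data of that point, with $p_1$ general on its elliptic curve — and all these curves are algebraically equivalent, so $B$ is a covering curve for $\overline{D}$.

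It remains to compute $B\cdot\overline{D}$, which one does by pulling back the class of $\overline{D}$ written in the standard generators $\lambda,\psi_1,\dots,\psi_n,\delta_{\mathrm{irr}},\delta_{0,S}$ of $\Pic(\overline{\mathcal{M}}_{1,n})_{\QQ}$. Because the elliptic curve is constant along $B$ one has $\lambda\cdot B=0$ and $\delta_{\mathrm{irr}}\cdot B=0$, while the numbers $\psi_i\cdot B$ and $\delta_{0,S}\cdot B$ are determined combinatorially by the coincidences among the sections $p_1,\dots,p_n$ along $B$; for instance $\delta_{0,\{1,2\}}\cdot B$ counts the points where $p_1=p_2$, which is strictly positive by the relabelling, and $B$ avoids every $\delta_{0,S}$ with $S\cap\{1,2\}=\emptyset$. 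Feeding these values into the known formula for $[\overline{D}]$ gives $B\cdot\overline{D}<0$, whence the rigidity and extremality of $\overline{D}$.

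I expect the real obstacle to be obtaining the class $[\overline{D}]=[\overline{\PPP}(d_1,\dots,d_n)]$, or at least the signs of its boundary coefficients (one needs them to be non-positive along the $\delta_{0,S}$, with a strictly negative coefficient along $\delta_{0,\{1,2\}}$). This is exactly where one must use the description of smoothable twisted canonical divisors and the global residue condition: on a curve $E_0\cup_q\mathbb{P}^1$ carrying the marked points of $S$ on the $\mathbb{P}^1$, one has to decide which twisted differentials of type $\kappa$ occur as limits of differentials on smooth curves, and with what multiplicity $\overline{D}$ meets the corresponding boundary divisor. Once those signs are settled, the estimate $B\cdot\overline{D}<0$, and hence the proposition, follows formally.
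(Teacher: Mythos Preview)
The paper's own ``proof'' is a bare citation to \cite{ChenCoskun}, so there is no argument in the paper to compare against; you are correctly reconstructing the Chen--Coskun strategy (irreducibility from $\gcd=1$, then a covering curve $B$ through a general point of $\overline{D}$ obtained by freezing $E,p_3,\dots,p_n$ and letting $p_1$ vary with $p_2$ determined, then $B\cdot\overline{D}<0$).

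The genuine gap is that you stop exactly where the content is. The sentence ``Feeding these values into the known formula for $[\overline{D}]$ gives $B\cdot\overline{D}<0$'' is the entire Chen--Coskun computation, and you have not carried it out. Your proposed shortcut --- that it suffices to know the $\delta_{0,S}$-coefficients of $[\overline{D}]$ are non-positive, with the one at $\delta_{0,\{1,2\}}$ strictly negative --- would indeed finish the argument (since $\lambda\cdot B=0$ and in genus one $\Pic(\overline{\mathcal{M}}_{1,n})_\QQ$ is spanned by $\lambda$ and the $\delta_{0,S}$), but you have not established those signs, and this is precisely what requires work: one must actually write down the class. Your instinct to reach for twisted canonical divisors and the global residue condition is misplaced here; Chen--Coskun's paper predates \cite{BCGGM}, and they obtain the class of $\overline{D}$ and the intersection $B\cdot\overline{D}$ by direct test-curve computations in $\overline{\mathcal{M}}_{1,n}$, not via degeneration of differentials. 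A smaller point: taking ``an irreducible component'' of the $d_2^2$-sheeted cover and then asserting all such components are algebraically equivalent needs a monodromy argument you have not given; it is cleaner to take the full (possibly reducible) fibre as $B$ and compute with that.
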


In~\cite{Mullane3}, the author also used covering curves with negative intersection to exhibit infinitely many rigid and extremal divisors in $\Mgnb$ for $g\geq2$ and $n= g+1$. Further, these divisors pullback under the forgetful morphisms $\Mgnb\longrightarrow \overline{\mathcal{M}}_{g,g+1}$ to give infinitely many rigid and extremal divisors in $\Mgnb$ for $g\geq2$ and $n\geq g+1$. 

\begin{prop}[\cite{Mullane3}]\label{Mullane}
The divisors $D^{g+1}_{d_1,d_2,d_3,1^{2g-3}}$ are rigid and extremal in $\overline{\mathcal{M}}_{g,g+1}$ for
$d_1+d_2+d_3=1$, $\sum_{d_i<0}d_i\leq-2$, for all $g\geq 2$.
\end{prop}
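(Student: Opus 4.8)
The plan is to adapt the covering-curve strategy of Chen--Coskun to the higher-genus situation. The divisor $D^{g+1}_{d_1,d_2,d_3,1^{2g-3}}$ is the pushforward $\varphi_{g-1}{}_*\overline{\PPP}(d_1,d_2,d_3,1^{2g-3})$ of the meromorphic stratum in $\overline{\mathcal{M}}_{g,g+2}$ under the map forgetting the last $g-1$ marked points; one needs $\sum d_i = 2g-2$, i.e. $d_1+d_2+d_3 = 2g-2-(2g-3) = 1$, and the meromorphic condition $\sum_{d_i<0} d_i \le -2$ (so that the signature is genuinely meromorphic, giving a codimension-$g$ stratum that pushes to a divisor). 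First I would record irreducibility of $\PPP(d_1,d_2,d_3,1^{2g-3})$ and hence of the divisor $D^{g+1}_{d_1,d_2,d_3,1^{2g-3}}$, using the connectedness results on strata of meromorphic differentials (the Boissy classification, or the relevant statement in the references), so that it suffices to produce a single covering curve with negative self-intersection-type pairing against the divisor class.

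The core of the argument is the construction of the covering curve $B$. I would take a general pointed genus-$g$ curve $[C, p_1, p_3, p_4, \dots, p_{2g-1}]$ with $p_4,\dots,p_{2g-1}$ general points imposing $2g-4$ conditions, fix a meromorphic differential $\omega$ on $C$ whose divisor is $d_1 p_1 + (\text{something}) + \sum_{i\ge 4} 1\cdot p_i$ realizing the signature once we also move $p_2$ and attach a varying component, and then let the curve $B$ be swept out by varying the remaining data inside a pencil --- concretely, attaching a varying $\mathbb{P}^1$ (or varying a point on a fixed rational curve) carrying the poles $d_2, d_3$ so that the global residue condition is automatically satisfied along $B$. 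The key numerical computation is then the intersection of $B$ with the generators of $\Pic(\overline{\mathcal{M}}_{g,g+1})$ --- namely $\lambda$, the $\psi_i$, and the boundary classes $\delta_{irr}$ and $\delta_{0,S}$ --- which one reads off from the geometry of the pencil (degree of $\lambda$ from the relative dualizing sheaf, $\psi$ from the marked sections, boundary from the singular fibers). Combining these with the known expression for the divisor class $[D^{g+1}_{d_1,d_2,d_3,1^{2g-3}}]$ as a combination of these generators --- which I would compute by the standard test-curve / Porteous-type method, or extract from the companion computation of strata classes --- one obtains $B \cdot D^{g+1}_{d_1,d_2,d_3,1^{2g-3}} < 0$, and since $B$ covers $D^{g+1}_{d_1,d_2,d_3,1^{2g-3}}$ this forces the divisor to be both rigid and extremal.

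The main obstacle I expect is the self-intersection (negativity) step: showing that the covering curve $B$ I build actually lies \emph{inside} the divisor and pairs negatively with it, rather than just nonnegatively. This requires (i) a careful choice of the varying parameter so that the degeneration only meets boundary divisors with controlled multiplicities --- in particular that the component carrying the poles degenerates in exactly the way that keeps the point on $D^{g+1}_{d_1,d_2,d_3,1^{2g-3}}$, using the twisted-differential / global-residue description of $\overline{\PPP}(\kappa)$ recalled in the Preliminaries --- and (ii) verifying that the hypotheses $d_1+d_2+d_3=1$ and $\sum_{d_i<0}d_i\le -2$ are exactly what make the relevant boundary coefficient negative. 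A secondary technical point is to ensure the construction works uniformly in $g$ and in the infinitely many choices of $(d_1,d_2,d_3)$; here I would note that increasing the magnitude of the poles only changes the $\psi$-coefficients and the $d_i$-dependent boundary coefficients in a way that preserves the sign of $B\cdot D^{g+1}_{d_1,d_2,d_3,1^{2g-3}}$, and that the $2g-3$ simple zeros $p_4,\dots,p_{2g-1}$ (together with $p_1$) supply enough moduli for $B$ to sweep out a dense subset of the divisor regardless of $g$.
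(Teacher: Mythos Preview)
The paper does not prove this proposition at all: its ``proof'' is a bare citation to \cite{Mullane3}, where the result was established. So your task here was simply to recognise that this is quoted from prior work, not to reconstruct the argument.

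That said, your broad strategy---irreducibility via Boissy, then a covering curve $B$ inside the divisor with $B\cdot D<0$---is indeed the method used in \cite{Mullane3}, as the paper indicates just before the statement. But your concrete construction of $B$ is both vague and apparently on the wrong track. You propose attaching a varying rational tail carrying the poles $d_2,d_3$; the curves actually used (see \S\ref{movingcurves} of the present paper, where the same family $B^n_\kappa$ reappears) are \emph{fibration curves on a fixed smooth genus-$g$ curve}: one fixes a general $[C,p_{g+2},\dots,p_m]$ and lets the remaining $p_i$ vary subject to $\sum k_i p_i\sim K_C$. No rational tail is attached, and the negativity comes from an honest intersection-number computation on $C$ (of the type carried out in Propositions~\ref{intg2}--\ref{intg3}), not from controlling boundary degenerations of a nodal family. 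Your ``global residue condition automatically satisfied along $B$'' remark suggests you are thinking of a boundary family, which would at best give a curve in $\partial\Mgnb$ rather than one sweeping out the interior of the divisor.

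Two smaller slips: the stratum $\PPP(d_1,d_2,d_3,1^{2g-3})$ has $2g$ marked points, so it lives in $\overline{\mathcal{M}}_{g,2g}$, not $\overline{\mathcal{M}}_{g,g+2}$; and there are $2g-3$ simple zeros $p_4,\dots,p_{2g}$, not $2g-4$.
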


Similarly, these divisors are simply pushforwards of strata of canonical divisors with meromorphic signatures
\begin{equation*}
[D^{g+1}_{d_1,d_2,d_3,1^{2g-3}}]=\frac{1}{(g-1)!}[\varphi_{g-1}{}_*\overline{\PPP}(d_1,d_2,d_3,1^{2g-3})].
\end{equation*}
Boissy~\cite{Boissy} showed ${\PPP}(d_1,d_2,d_3,1^{2g-3})$ to be irreducible and hence $\varphi_j{}_*\overline{\PPP}(d_1,d_2,d_3,1^{2g-3})$ is irreducible for $j=0,...,g-1$.

%

\subsection{Enumerative geometry on a general curve}\label{CxC}
In this section we present results on finite maps that will be used in enumerative calculations in later sections.

For a general genus $g=2$ curve $C$ and non-zero integers $d_1,d_2$ consider the map
\begin{eqnarray*}
\begin{array}{cccccc}
f_{d_1,d_2}:&C\times C&\longrightarrow &\Pic^{d_1+d_2}(C)\\
&(q_1,q_2)&\longmapsto&\OO_C(d_1q_1+d_2q_2).
\end{array}
\end{eqnarray*}

\begin{prop}\label{finite}
For $d_1\ne\pm d_2$ the map $f_{d_1,d_2}$ is finite with degree $2d_1^2d_2^2$. Further, $f_{d_1,d_2}$ has simple ramification along the diagonal $\Delta$ and the locus $I$ of points $(q_1,q_2)$ that are conjugate under the unique hyperelliptic involution of $C$. The intersection $\Delta\cap I$ consists of the six Weierstrass points of $C$ and the ramification order at these points is $2$.

For $d_1=d_2$ the map $f_{d_1,d_2}$ is generically finite with degree $2d_1^2d_2^2$. Further, $f_{d_1,d_2}$ has simple ramification along $\Delta$ and contracts $I$.

For $d_1=-d_2$ the map $f_{d_1,d_2}$ is generically finite with degree $2d_1^2d_2^2$. Further, $f_{d_1,d_2}$ has simple ramification along $I$ and contracts $\Delta$.
\end{prop}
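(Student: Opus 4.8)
The plan is to compute the degree of $f_{d_1,d_2}$ via intersection theory on $C \times C$ and then analyze the ramification locus by examining the differential of the map. For the degree, I would pull back the theta divisor: the map $f_{d_1,d_2}$ is (up to translation) the composite of $(q_1,q_2) \mapsto (d_1 q_1 + d_2 q_2 - \text{base})$ into $\Pic^0(C) = \Jac(C)$, so the degree equals the self-intersection number computed by pulling back the principal polarization. Writing $F_i$ for the class of a fiber of the $i$th projection and $\delta$ for the diagonal class on $C \times C$ (genus $2$), one has the standard relations $F_i^2 = 0$, $F_1 F_2 = \text{pt}$, and $\delta^2 = -(2g-2)\,\text{pt} = -2\,\text{pt}$, together with $\delta \cdot F_i = \text{pt}$. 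The Abel--Jacobi map $q \mapsto q - q_0$ pulls the theta divisor back to a class whose square is $g! = 2$; combining this with the fact that multiplication by $d_i$ on $\Jac(C)$ has degree $d_i^{2g} = d_i^4$, and tracking how the two factors interact through the polarization form, the self-intersection of $f_{d_1,d_2}^*\Theta$ works out to $2 d_1^2 d_2^2$. Concretely I would write $f_{d_1,d_2}^*\Theta \equiv d_1^2 \theta_1 + d_2^2 \theta_2 + d_1 d_2(\text{correction involving }\delta)$ where $\theta_i$ is the pullback under the $i$th Abel--Jacobi map, and expand the square using the intersection numbers above.

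Next I would handle the ramification. The differential of $f_{d_1,d_2}$ at $(q_1,q_2)$ fails to be injective exactly when there is a nonzero tangent vector in the kernel; identifying $T_{(q_1,q_2)}(C\times C) \cong T_{q_1}C \oplus T_{q_2}C$ and the tangent space of $\Pic$ with $H^0(C,\omega_C)^\vee$, the map on tangent spaces is dual to $H^0(\omega_C) \to \omega_C|_{q_1} \oplus \omega_C|_{q_2}$ scaled by $d_1$ and $d_2$ respectively (this is essentially the statement that $d_1 q_1 + d_2 q_2$ moves in a linear series iff $h^0(d_1 q_1 + d_2 q_2) \geq 2$, i.e.\ a Riemann--Roch / special divisor condition). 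For $g=2$ the canonical map is the hyperelliptic double cover $C \to \PP^1$: two points $q_1, q_2$ impose dependent conditions on $H^0(\omega_C)$ precisely when $q_1 + q_2 \sim K_C$, i.e.\ when $q_2 = \iota(q_1)$ for the hyperelliptic involution $\iota$. This already shows the ramification can only occur along $\Delta$ (where $q_1 = q_2$, forcing a repeated point, hence a tangency condition) and along $I$ (where $q_2 = \iota(q_1)$). I would then separate the three cases by a local computation: when $d_1 \neq \pm d_2$, a direct rank computation of the $2\times 2$ matrix of the differential in suitable local coordinates shows the kernel is one-dimensional (simple ramification) along each of $\Delta$ and $I$ individually, with the two conditions becoming simultaneously degenerate precisely at the six fixed points of $\iota$ (the Weierstrass points), where I expect the local model to be $(x,y) \mapsto (x^2, y^2)$-type behavior giving ramification order $2$. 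When $d_1 = d_2$, the map factors through $\Sym^2 C$, which contracts $I$ entirely (since $d_1 q_1 + d_1 \iota(q_1) = d_1 K_C$ is constant in $\Pic$), while $\Delta$ remains simple ramification; symmetrically, when $d_1 = -d_2$, the divisor $d_1(q_1 - q_2)$ is constant along $\Delta$, contracting it, and $I$ carries the simple ramification.

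The main obstacle I anticipate is the degree computation: getting the cross-term coefficient right in $f_{d_1,d_2}^*\Theta$ and verifying that the self-intersection is exactly $2d_1^2 d_2^2$ rather than $d_1^2 d_2^2$ or $(d_1^2 + d_2^2)$-type expressions. A clean way around the direct Picard-variety intersection theory is to count fibers directly: for generic $L \in \Pic^{d_1+d_2}(C)$, the fiber $f_{d_1,d_2}^{-1}(L)$ consists of pairs $(q_1,q_2)$ with $d_1 q_1 + d_2 q_2 \sim L$; fixing $q_1$, the condition $d_2 q_2 \sim L - d_1 q_1$ has $d_2^2$ solutions $q_2$ (since multiplication by $d_2$ on the genus-$2$ Jacobian has degree $d_2^{2g} = d_2^4$, but we are solving $d_2 q_2 = \text{fixed point}$ which, viewed as a fiber of $C \to \Jac$ composed appropriately... ) — actually the cleanest count fixes the \emph{image} point and uses that the preimage is cut out on $C \times C$ by the condition defining a translate of an abelian subvariety, reducing to the self-intersection above. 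I would present whichever of these two routes (intersection numbers on $C\times C$ versus direct enumerative count) makes the factor of $2$ most transparent, and cross-check the two against each other. The ramification analysis, by contrast, I expect to be routine once the canonical-map interpretation is in place, since everything reduces to the geometry of the hyperelliptic double cover.
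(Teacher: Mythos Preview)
Your plan is essentially the paper's own approach: compute the degree via the theta divisor and Poincar\'e's formula, then analyse the differential as the (scaled) evaluation matrix of a basis of $H^0(C,\omega_C)$ at $(q_1,q_2)$, which vanishes precisely along $\Delta$ and $I$. The ramification discussion you sketch is exactly what the paper does, only the paper carries it out in explicit local coordinates (basis $d\omega$, $\omega(\omega-\alpha)d\omega$), obtaining $\det(DF)=d_1d_2(s-t)(s+t-\alpha)$, which makes the simple ramification along each branch and the order-$2$ behaviour at the six Weierstrass points $s=t=\alpha/2$ immediate.

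Where you diverge slightly is in the degree computation, and this is precisely the place you flag as your main obstacle. You propose computing $(f_{d_1,d_2}^*\Theta)^2$ on $C\times C$, which forces you to track the cross-term in $\mathrm{NS}(C\times C)$ and worry about the factor of $2$. The paper sidesteps this entirely by working in $J(C)$ rather than on $C\times C$: after translating by $-(d_1+d_2)e$ to land in $J(C)$, one uses the two facts (i) $\deg\Theta^g=g!=2$ and (ii) the curve $\{\OO_C(k(x-e)):x\in C\}$ has class $k^2\Theta$ in $J(C)$. Then the degree is simply
\[
\deg F \;=\; \deg\bigl(d_1^2\Theta\cdot d_2^2\Theta\bigr)\;=\;2d_1^2d_2^2,
\]
with no cross-term bookkeeping. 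This is equivalent to your pullback computation by the projection formula, but it is the cleaner route and resolves exactly the ambiguity you were concerned about.
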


\begin{proof}
This generalises~\cite{ChenTarasca}. Fix $d_1,d_2$ and let $f=f_{d_1,d_2}$. Take a general point $e\in C$ and consider the isomorphism
\begin{eqnarray*}
\begin{array}{cccccc}
H:\Pic^{d_1+d_2}(C)&\longrightarrow &J(C)\\
L&\longmapsto&L\otimes\OO_C(-(d_1+d_2)e).
\end{array}
\end{eqnarray*}
Now let $F=H\circ f$. Then we have $\deg F=\deg f$ and
\begin{equation*}
F(q_1,q_2)=\OO_C\biggl(d_1(q_1-e)+d_2(q_2-e)\biggr).
\end{equation*}
Let $\Theta$ be the fundamental class of the theta divisor in $J(C)$. By \cite{ACGH} \S1.5 we have
\begin{equation*}
\deg \Theta^2=g!=2
\end{equation*}
and the locus of $\OO_C(k(x-e))$ for varying $x\in C$ has class $k^2\Theta$ in $J(C)$. Hence
\begin{eqnarray*}
\deg F&=&\deg F_*F^*([\OO_C])\\
&=&\deg \left(d_1^2d_2^2\Theta^2\right)\\
&=&2d_1^2d_2^2.
\end{eqnarray*}
Now consider the branch and exceptional locus of $F$. This is the genus $g=2$ case of the general genus case dealt with in [\cite{Mullane1}, \S2.6]. First we look locally analytically at $F$ around the points of interest. If $f_0d\omega,f_{1}d\omega$ is a basis for $H^0(C,K_C)$, then locally analytically the map becomes
\begin{eqnarray*}
(q_1,q_2)&\longmapsto&\biggl(\int_e^{q_1}d_1f_0d\omega+\int_e^{q_2}d_2f_0d\omega,\int_e^{q_1}d_1f_{1}d\omega+\int_e^{q_2}d_2f_{1}d\omega\biggr)
\end{eqnarray*}
modulo $H_1(C,K_C)$. The map on tangent spaces at any fixed point $(q_1,q_2)\in C\times C$ is the Jacobian of $F$ at the point, which is
\begin{equation*}
DF_{(q_1,q_2)}=\begin{pmatrix}f_0(q_1)&f_0(q_2)\\
f_1(q_1)&f_1(q_2) \end{pmatrix}\begin{pmatrix}d_1&0\\
0&d_2 \end{pmatrix}.
\end{equation*}
Ramification or contraction in the map $F$ occurs when the map on tangent spaces is not injective which takes place at the points where $\rk(DF_{(q_1,q_2)})<2$. The ramification index at a point $(q_1,q_2)$ will be equal to the vanishing order of the determinant of $DF_{(q_1,q_2)}$ at the point.  

This can be written locally analytically by the basis $d\omega$ and $\omega(\omega-\alpha)d\omega$ where $0$ is conjugate to $\alpha$. In local coordinates with $(q_1,q_2)=(s,t)$ we have
\begin{equation*}
DF_{(q_1,q_2)}=\begin{pmatrix}1&1\\
s(s-\alpha)&t(t-\alpha) \end{pmatrix}\begin{pmatrix}d_1&0\\
0&d_2 \end{pmatrix}
\end{equation*}
and
\begin{equation*}
\det(DF_{(q_1,q_2)})=d_1d_2(s-t)(s+t-\alpha).
\end{equation*}
The loci $s-t=0$ and $s+t-\alpha=0$ are $\Delta$ and $I$ respectively and intersect at the Weierstrass point $s=t=\alpha/2$. When these irreducible loci are contracted is clear from examining their image in $\Pic^{d_1+d_2}(C)$.
\end{proof}

\subsection{Moving curves}\label{movingcurves}
In addition to defining subvarieties and divisors, the strata of canonical divisors also yield interesting curves in $\Mgnb$. Taking a fibration of $\overline{\PPP}(\kappa)$ for a meromorphic signature $\kappa$ with $|\kappa|=m\geq g+1$ and $1\leq n\leq m$ we obtain the curve
\begin{equation*}
B^n_{\kappa}:=\overline{\bigl\{ [C,p_1,...,p_n]\in\mathcal{M}_{g,n} \hspace{0.15cm}\big| \hspace{0.15cm} \text{fixed general $[C,p_{g+2},...,p_m]\in\mathcal{M}_{g,m-g-1}$ and } \sum_{i=1}^m k_ip_i \sim K_C   \bigr\}   }.
\end{equation*}
That is, by fixing a meromorphic signature $\kappa$, a general genus $g$ curve $C$ and all but exactly $g+1$ points we obtain $B^n_{\kappa}$, a one dimensional subvariety of $\overline{\PPP}(\kappa)$. For $m=|\kappa|\geq n+g$ these curves provide moving curves in $\Mgnb$, which are curves that have non-negative intersection with all effective divisors.
\begin{prop}\label{covering}
For $g\geq 2$, a meromorphic signature $\kappa$ with $|\kappa|=g+n-1$ and $n\geq g+1$, $B^n_{\kappa,1,-1}$ is a moving curve in $\overline{\mathcal{M}}_{g,n}$. Further  
\begin{equation*}
B^n_{\kappa,1,-1}\cdot D^n_\kappa=0.
\end{equation*}
Hence all non-negative sums of the irreducible components of the divisor $D^n_\kappa$ lie on the boundary of the closure of the effective cone known as the pseudo-effective cone.
\end{prop}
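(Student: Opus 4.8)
The plan is to prove the two parts of the proposition --- that $B^n_{\kappa,1,-1}$ moves and that it has zero intersection with $D^n_\kappa$ --- and then extract the statement about the pseudo-effective cone formally. Write $\kappa=(k_1,\dots,k_{g+n-1})$, so $(\kappa,1,-1)$ has $g+n+1$ entries, and recall that a general $B:=B^n_{\kappa,1,-1}$ is obtained by fixing a general curve $C$ together with general points $q_1,q_2,p_{g+2},\dots,p_{g+n-1}$ on it ($q_1,q_2$ carrying the exponents $1,-1$) and letting $p_1,\dots,p_{g+1}$ vary with $\sum_{i=1}^{g+n-1}k_ip_i+q_1-q_2\sim K_C$, pushed to $\overline{\mathcal{M}}_{g,n}$ by remembering $p_1,\dots,p_n$. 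First I would check that $B$ really is a curve: with everything outside $p_1,\dots,p_{g+1}$ fixed the condition reads $\sum_{i=1}^{g+1}k_ip_i\sim L$ for a fixed class $L$, which cuts the dominant map $C^{g+1}\to\Jac(C)$ down to a one-dimensional fibre, and because $n\geq g+1$ the moving points lie among the remembered ones, so the induced map to $\overline{\mathcal{M}}_{g,n}$ is finite on that fibre.

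Next I would establish the moving property by showing a general $[C_0,y_1,\dots,y_n]\in\mathcal{M}_{g,n}$ lies on some member: taking $C=C_0$ and $p_i=y_i$ for $i\leq n$, one needs points $q_1,q_2,p_{n+1},\dots,p_{g+n-1}$ on $C_0$ with $q_1-q_2+\sum_{i=n+1}^{g+n-1}k_ip_i$ in the class $K_{C_0}-\sum_{i=1}^nk_iy_i$ (degrees agree since $\sum_{i=1}^{g+n-1}k_i=2g-2$), and these exist because a sum of at least $g$ translates of the Abel--Jacobi image of $C_0$ fills out $\Jac(C_0)$. The resulting member genuinely varies in $\overline{\mathcal{M}}_{g,n}$, the points $p_1,\dots,p_{g+1}$ moving, so the family sweeps out a dense open set; in particular $B$ meets every pseudo-effective divisor nonnegatively.

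The core of the argument is $B\cdot D^n_\kappa=0$, which I would get by making a general member $B$ disjoint from $D^n_\kappa$. Every $[C_0,p_1,\dots,p_n]\in B$ satisfies $\sum_{i=1}^nk_ip_i\sim K_{C_0}-q_1+q_2-\sum_{i=n+1}^{g+n-1}k_ip_i$, so it lies on $D^n_\kappa$ exactly when $N:=[\,q_1-q_2+\sum_{i=n+1}^{g+n-1}k_ip_i\,]$ lies in the image $Z$ of $(x_{n+1},\dots,x_{n+g-1})\mapsto[\sum_{i=n+1}^{g+n-1}k_ix_i]$ inside $\Jac(C_0)$ --- a proper subvariety, and a closed condition on $[C_0,p]$ cutting out $D^n_\kappa\cap\mathcal{M}_{g,n}$. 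Here the hypothesis $n\geq g+1$ is essential: the indices $n+1,\dots,g+n-1$ all exceed $g+1$, so $N$ involves only the fixed points $q_1,q_2,p_{n+1},\dots,p_{g+n-1}$ and is constant along $B$. Hence $B$ is either contained in $D^n_\kappa$ or disjoint from it according to whether $N\in Z$, and as the fixed data varies $N$ runs over all of the relevant $\Pic^d(C_0)$ (again a sum of $\geq g$ translates of a curve), so a general choice gives $N\notin Z$. Using the same genericity to push the finitely many boundary limits of the complete curve $B$ off $D^n_\kappa$, we conclude $B\cap D^n_\kappa=\emptyset$, hence $B\cdot D^n_\kappa=0$. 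Writing $D^n_\kappa=\sum a_iD_i$ with $a_i>0$ and $D_i$ irreducible, the moving property gives $B\cdot D_i\geq 0$ and so $B\cdot D_i=0$ for all $i$; then any nonnegative combination $\sum b_iD_i$ is annihilated by the functional $B\cdot(-)$, which is nonnegative on the pseudo-effective cone, and therefore lies on its boundary.

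The step I expect to be the real obstacle is the boundary control in the last paragraph. The curve $B$ is complete, so it must meet $\partial\overline{\mathcal{M}}_{g,n}$, and one has to verify that for general fixed data its limits --- degenerations where the moving points collide with each other or with the fixed points --- lie off $D^n_\kappa$; concretely, that the twisted canonical divisors of type $(\kappa,1,-1)$ occurring there do not specialise, after forgetting the two points carrying exponents $\pm1$, to twisted canonical divisors of type $\kappa$. The remaining ingredients --- the dimension counts, generic finiteness of the forgetful maps on the relevant strata, and the surjectivity of sums of at least $g$ translates of a curve onto its Jacobian --- are routine.
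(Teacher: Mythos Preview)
The paper does not give a self-contained proof here; it simply cites \cite[Theorem~1.1]{Mullane3}. So your sketch is being compared against the argument in that earlier paper rather than anything written in this one.

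Your outline captures the right strategy: the family $B^n_{\kappa,1,-1}$ covers $\Mgnb$ by the Jacobian surjectivity argument you describe, and you have correctly isolated the mechanism forcing the interior intersection with $D^n_\kappa$ to be empty --- the class $N$ depends only on the fixed data precisely because $n\geq g+1$ pushes all indices $n+1,\dots,g+n-1$ into the fixed range $\{g+2,\dots,g+n-1\}$. The formal deduction of the pseudo-effective boundary statement from $B\cdot D^n_\kappa=0$ and the moving property is also fine.

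The part you flag as ``the real obstacle'' is indeed where the content lies, and as written your sketch does not resolve it. The sentence ``using the same genericity to push the finitely many boundary limits of the complete curve $B$ off $D^n_\kappa$'' is not an argument: the curve $B$ is complete and necessarily meets $\partial\Mgnb$, and one has to either (i) enumerate the twisted canonical divisors of type $(\kappa,1,-1)$ arising as limits along $B$ via the global residue condition and verify that none of them, after forgetting the auxiliary $\pm1$ points and the last $g-1$ points, lands in $\overline{D^n_\kappa}$, or (ii) bypass set-theoretic disjointness altogether. In \cite{Mullane3} the intersection number is obtained by computing the numerical class of $B^n_{\kappa,1,-1}$ and pairing it against the known divisor class of $D^n_\kappa$, which avoids the boundary degeneration analysis you are proposing. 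Your geometric approach would work in principle, but the boundary step is currently a genuine gap rather than a routine check.
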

\begin{proof}
~\cite[Theorem 1.1]{Mullane3} 
\end{proof}

In \S\ref{meromorphic} we will also require the intersection of $B^n_{\kappa,1,-1}$ with certain boundary divisors for specific $n$ and $\kappa$. We present the required intersection numbers in the following propositions.
\begin{prop}\label{intg2}
For $\kappa=(-h,1^2,h)$ with $h\geq2$, in $\overline{\mathcal{M}}_{2,3}$,
\begin{equation*}
B^3_{\kappa,1,-1}\cdot\delta_{0:\{2,3\}}=8h^2
\end{equation*}
\end{prop}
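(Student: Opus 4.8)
The plan is to compute the intersection number $B^3_{\kappa,1,-1}\cdot\delta_{0:\{2,3\}}$ by realizing the curve $B^3_{\kappa,1,-1}$ explicitly as a fibration of $\overline{\PPP}(-h,1,1,h,1,-1)$ in $\overline{\mathcal{M}}_{2,5}$ over a fixed general genus $2$ curve $C$ with fixed general marked points $p_4,p_5$ playing the role of the parameters $p_{g+2},\dots,p_m$, and then counting the points of the curve lying over the boundary divisor $\delta_{0:\{2,3\}}$, i.e. the locus where $p_2$ and $p_3$ collide on a rational tail. First I would set up coordinates: on the fixed general $[C,p_4,p_5]$ the remaining marked points $p_1,p_2,p_3$ must satisfy $-hp_1 + p_2 + p_3 + hp_4 + p_5 \sim K_C$ after incorporating the $(1,-1)$ piece appropriately; translating everything into $\Pic(C)$ and eliminating, the condition on $(p_2,p_3)$ (with $p_1$ determined, or with the appropriate twist) becomes the fiber of a map of the form $f_{d_1,d_2}$ as in \S\ref{CxC}. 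The boundary point $\delta_{0:\{2,3\}}$ corresponds to the diagonal $\Delta = \{p_2 = p_3\}$ in the $C\times C$ picture, because a rational bubble carrying $p_2,p_3$ degenerates exactly when those two points come together on $C$.

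The key steps, in order, are: (1) write $B^3_{\kappa,1,-1}$ as the fiber over a general point of the map forgetting the variable points, identifying this fiber with a fiber of a finite map $C\times C \to \Pic^{d}(C)$ where the exponents $d_1,d_2$ attached to $p_2,p_3$ are the relevant entries of the signature (here both equal to $1$, since $\kappa$ has $1^2$ in the $p_2,p_3$ slots; the node-smoothing $(1,-1)$ pair contributes a further point whose position is forced, so it does not enter the count); (2) invoke Proposition~\ref{finite}: since the two exponents attached to $p_2,p_3$ are equal ($d_1 = d_2 = 1$ would give degree $2$, but the actual count must absorb the full signature), recompute carefully the degree of the relevant map as $2d_1^2 d_2^2$ with the correct $d_1,d_2$ — and here I expect $d_1 = d_2$ or $d_1 = -d_2$ to be the case, so the map contracts one of $\Delta$ or $I$; (3) compute the intersection of the fiber curve with $\Delta$ (the pullback of $\delta_{0:\{2,3\}}$) using that $f_{d_1,d_2}$ has simple ramification along $\Delta$, so that a general fiber meets $\Delta$ transversely and the count is read off from the ramification/branch data; (4) assemble: the answer $8h^2$ should emerge as $2\cdot d_1^2 d_2^2$ evaluated with $\{d_1,d_2\}$ the pair coming from the $\pm h$ part of the signature interacting with the moving $p_2,p_3$ — more precisely I anticipate the curve is a fiber of a map with ramification index governed by $h$, giving $8h^2 = 2\cdot(2h)^2/ \ldots$ after bookkeeping, or directly $2\cdot(h\cdot \text{something})^2$.

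The main obstacle will be the bookkeeping in step (1): correctly identifying which marked points are the "variable" ones defining the fibration $B^3_{\kappa,1,-1}$ and which $\pm h$-weighted point moves against $p_2,p_3$, so that the boundary divisor $\delta_{0:\{2,3\}}$ pulls back to the correct locus ($\Delta$ versus $I$) in the relevant $C\times C$, and then applying Proposition~\ref{finite} with the right exponents. In particular one must check that when $p_2$ and $p_3$ collide, the limit stable curve is genuinely in $\delta_{0:\{2,3\}}$ (a $\PP^1$ with $p_2,p_3$ attached to $C$) and that each such collision point on $C$ contributes with multiplicity one to the intersection with $\delta_{0:\{2,3\}}$, which follows from the simple ramification statement in Proposition~\ref{finite} together with a local analysis of the node-smoothing parameter near $\Delta$. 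I expect that once the map is correctly set up as $f_{d_1,d_2}$ with $d_1 = h$, $d_2 = 1$ (or the analogous pair), the factor $8h^2$ will come out as the product of the degree contributions $2h^2 \cdot 2^2$ or $2h^2\cdot 4$ after accounting for the fiber dimension and the forced position of the $(1,-1)$ node, and the remaining argument is a routine transversality check.
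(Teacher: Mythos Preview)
Your overall strategy---realize $B^3_{\kappa,1,-1}$ over a fixed general curve $C$ with fixed general auxiliary points and count the points landing in $\delta_{0:\{2,3\}}$ via Proposition~\ref{finite}---is exactly the paper's. But the bookkeeping in your step~(1), which you correctly flag as the main obstacle, has gone wrong, and this propagates through the rest of the sketch.

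With $\kappa=(-h,1,1,h)$ and the appended $(1,-1)$, the full signature has six entries, so the fixed points are $q_1,q_2,q_3$ (your $p_4,p_5,p_6$; you dropped one), and $B^3_{\kappa,1,-1}$ is the locus of $(p_1,p_2,p_3)\in C^3$ with
\[
-hp_1+p_2+p_3+hq_1+q_2-q_3\sim K_C.
\]
All three of $p_1,p_2,p_3$ move; $p_1$ is \emph{not} determined by $(p_2,p_3)$, so projecting to the $(p_2,p_3)$--factor does not present $B^3$ as a fiber of any $f_{d_1,d_2}$. Your step~(3), intersecting a ``fiber curve'' with $\Delta$ via the ramification of $f_{d_1,d_2}$, therefore has no clear meaning: a general fiber of $f_{d_1,d_2}$ is zero-dimensional and misses $\Delta$ entirely. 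The paper instead imposes the collision $p_2=p_3$ \emph{first}. On the genus-$2$ component the limiting twisted differential then acquires a zero of order $1+1=2$ at the node, so the condition becomes
\[
-hp_1+2p_2+hq_1+q_2-q_3\sim K_C,
\]
a single equation in the two remaining free points $(p_1,p_2)$. This is exactly a fiber of $f_{h,-2}$, which by Proposition~\ref{finite} has degree $2\cdot h^2\cdot 2^2=8h^2$; for general $q_i$ none of the solutions involve forbidden coincidences, and the count is complete. The elusive factor of $4$ you were hunting for is simply the $2^2$ coming from the exponent $2=1+1$ at the collision point---not from ramification along $\Delta$, and not from the exponents $(h,1)$ you guessed.
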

\begin{proof}
To find $B^3_{\kappa,1,-1}\cdot\delta_{0:\{2,3\}}$ we need to enumerate the limits of differentials of this signature with $p_2$ and $p_3$ sitting together on a rational tail. Hence we require the points $p_1$ and $p_2$ such that
\begin{equation*}
-hp_1+2p_2+hq_1+q_2-q_3\sim K_C
\end{equation*}
with $p_1\ne p_2$, $p_i\ne q_j$ and any limits that may occur with these points colliding that will satisfy the global residue condition.

To enumerate such points we consider the map
\begin{eqnarray*}
\begin{array}{cccccc}
f_{h,-2}:&C\times C&\longrightarrow &\Pic^{h-2}(C)\\
&(p_1,p_2)&\longmapsto&\OO_C(hp_1-2p_2).
\end{array}
\end{eqnarray*}
introduced in \S\ref{CxC}. Analysing the fibre of this map above $hq_1-q_2+q_3-K_C\in \Pic^{h-2}(C)$ will provide us with the solutions of interest. By Proposition~\ref{finite} for $h\geq3$ this map is finite of degree $8h^2$, simply ramified along the diagonal $\Delta$ and the locus of pairs of points that are conjugate under the hyperelliptic involution denoted $I$. For $h=2$ this map is generically finite of degree $8h^2=32$, contracts $\Delta$ and is simply ramified along $I$. 

For a general choice of $q_i$ the fibre will contain no solutions where $p_1$ and $p_2$ coincide with each other or any of the $q_i$. Hence we have found all solutions.
\end{proof}

\begin{prop}\label{intg3}
For $\kappa=(d_2,d_3,1^3,d_1)$ with $d_i\in\ZZ\setminus\{0\}$, $d_1\geq 2$ and $d_1+d_2+d_3=1$, in $\overline{\mathcal{M}}_{3,4}$,
\begin{equation*}
B^4_{\kappa,1,-1}\cdot\delta_{0:\{3,4\}}=24d_2^2d_3^2
\end{equation*}
\end{prop}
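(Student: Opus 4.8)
The plan is to follow Proposition~\ref{intg2}: enumerate the smoothable twisted canonical divisors of signature $\kappa'=(d_2,d_3,1,1,1,d_1,1,-1)$ lying over $\delta_{0:\{3,4\}}$ that occur in $\overline{B^4_{\kappa,1,-1}}$. Note that $g=3$, and that along $B^4_{\kappa,1,-1}$ the curve $C$ together with $p_5,p_6,p_7,p_8$ of weights $1,d_1,1,-1$ is fixed general, while $p_1,p_2,p_3,p_4$ of weights $d_2,d_3,1,1$ vary. A point of $\overline{B^4_{\kappa,1,-1}}$ on $\delta_{0:\{3,4\}}$ has underlying curve $C\cup_x R$ with $R\cong\PP^1$ carrying $p_3,p_4$ and meeting $C$ at $x$; degeneration of differentials forces $\ord_x(\omega_R)=-2-1-1=-4$, hence $\ord_x(\omega_C)=2$, so the differential on $C$ has divisor
\begin{equation*}
d_2p_1+d_3p_2+2x+p_5+d_1p_6+p_7-p_8\sim K_C,
\end{equation*}
that is, $d_2p_1+d_3p_2+2x\sim\Lambda$ with $\Lambda:=K_C-p_5-d_1p_6-p_7+p_8\in\Pic^{3-d_1}(C)$ fixed by the general data. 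All prescribed poles lie on $C$ and $\ord_x(\omega_C)=2>-1$, so the global residue condition is vacuous; every such twisted differential is therefore smoothable, and since the three special points on $R$ carry no moduli, the points of $\overline{B^4_{\kappa,1,-1}}$ on $\delta_{0:\{3,4\}}$ coming from an irreducible rational tail are precisely the fibre over $\Lambda$ of
\begin{equation*}
\nu\colon C^3\longrightarrow\Pic^{3-d_1}(C),\qquad (p_1,p_2,x)\longmapsto\OO_C(d_2p_1+d_3p_2+2x).
\end{equation*}

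Next I would compute $\deg\nu$ exactly as in Proposition~\ref{finite}, now in genus $3$ with three points --- the $g=3$ case of~\cite[\S2.6]{Mullane1}. Translating to $J(C)$, where $\deg\Theta^3=3!=6$ and the image of $C$ under $y\mapsto\OO_C(k(y-e))$ has class $k^2\Theta^2/2!$, one finds that $\nu$ is dominant, hence generically finite, of degree $3!\,d_2^2d_3^2\,2^2=24d_2^2d_3^2$ (this uses $d_2,d_3\neq0$). As with $f_{2,-2}$ in Proposition~\ref{intg2}, for special $d_2,d_3$ the map $\nu$ may contract a diagonal of $C^3$, but this is irrelevant for a general fibre.

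It then remains to see that every point of $\nu^{-1}(\Lambda)$ contributes with multiplicity one and that no other stratum of $\delta_{0:\{3,4\}}$ meets $\overline{B^4_{\kappa,1,-1}}$. Since $d_1\neq0$ the assignment $(p_5,p_6,p_7,p_8)\mapsto\Lambda$ is dominant, so for general fixed data $\Lambda$ is a general point of $\Pic^{3-d_1}(C)$; then $\nu^{-1}(\Lambda)$ is a reduced set of $24d_2^2d_3^2$ points, disjoint from the ramification locus of $\nu$, from the pairwise diagonals of $C^3$, and from $p_5,\dots,p_8$ (each such locus maps into a proper closed subset of $\Pic^{3-d_1}(C)$). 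A limit in which $p_3,p_4$ meet together with a further point, or in which they degenerate onto a chain of rational curves, likewise forces a relation of the form $(2+d_i)y+d_jp\sim\Lambda$ with $\{i,j\}=\{2,3\}$ and $y,p\in C$, which a general $\Lambda$ avoids; hence $\overline{B^4_{\kappa,1,-1}}$ meets $\delta_{0:\{3,4\}}$ only at the $\nu^{-1}(\Lambda)$ above, each an honest stable curve carrying an honest twisted differential. Finally, writing $\nu=\mu\circ\iota$ with $\mu\colon C^4\to\Pic^{3-d_1}(C)$, $(p_1,p_2,p_3,p_4)\mapsto\OO_C(d_2p_1+d_3p_2+p_3+p_4)$, and $\iota\colon C^3\hookrightarrow C^4$ the diagonal $\{p_3=p_4\}$, the curve $B^4_{\kappa,1,-1}=\mu^{-1}(\Lambda)$ meets $\iota(C^3)$ transversally at each such point precisely because $\nu$ is unramified there; since $\delta_{0:\{3,4\}}$ pulls back to the reduced divisor $\{p_3=p_4\}$ under $(p_1,\dots,p_4)\mapsto[C,p_1,\dots,p_4]$, each intersection has multiplicity one and $B^4_{\kappa,1,-1}\cdot\delta_{0:\{3,4\}}=24d_2^2d_3^2$.

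The main obstacle is the genus $3$ degree-and-ramification analysis of $\nu$ --- the analogue of Proposition~\ref{finite}, including identifying which diagonals of $C^3$ are contracted --- together with checking that the extra components of the space of twisted canonical divisors of signature $\kappa'$, and the chain degenerations above, really contribute nothing once the data is general. Given $\deg\nu=24d_2^2d_3^2$ and generic unramifiedness, the transversality statement and the enumeration are routine.
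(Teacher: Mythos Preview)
Your proposal is correct and follows essentially the same route as the paper: you set up the same enumerative problem, define the same map $\nu\colon C^3\to\Pic^{3-d_1}(C)$, compute its degree as $3!\cdot d_2^2d_3^2\cdot 2^2=24d_2^2d_3^2$ via the theta divisor on $J(C)$, and then invoke generality of the fixed data to ensure the fibre is reduced and avoids all degenerate loci. Your treatment is in fact more thorough than the paper's---you explicitly verify that the global residue condition is vacuous, rule out deeper chain degenerations, and justify transversality via the factorisation $\nu=\mu\circ\iota$---but these are refinements of the same argument rather than a different approach.
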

\begin{proof}
To find $B^4_{\kappa,1,-1}\cdot\delta_{0:\{3,4\}}$ we need to enumerate the limits of differentials of this signature with $p_3$ and $p_4$ sitting together on a rational tail. Hence we require the points $p_1,p_2,p_3$ such that for fixed general $q_i$,
\begin{equation*}
d_2p_1+d_3p_2+2p_3+q_1+d_1q_2+q_3-q_4\sim K_C
\end{equation*}
with $p_i\ne p_j$ for $i\ne j$ and $p_i\ne q_j$ and any limits that may occur with these points colliding that will satisfy the global residue condition.

Consider the map
\begin{eqnarray*}
\begin{array}{cccccc}
f:C^{3}&\longrightarrow &\Pic^{3-d_1}(C)\\
(p_1,p_2,p_3)&\longmapsto&O_C(d_2p_1+d_3p_2+2p_3).
\end{array}
\end{eqnarray*}
The fibre of this map above $K_C(-q_1-d_1q_2-q_3+q_4)\in\Pic^{3-d_1}(C)$ will give us the solutions of interest. Take a general point $e\in C$ and consider the isomorphism
\begin{eqnarray*}
\begin{array}{cccccc}
h:\Pic^{3-d_1}(C)&\longrightarrow &J(C)\\
L&\longmapsto&L\otimes\OO_C(-de).
\end{array}
\end{eqnarray*}
Now let $F=h\circ f$, then $\deg F=\deg f$. Observe
\begin{equation*}
F(p_1,p_2,p_3)=\OO_C(d_2(p_1-e)+d_3(p_2-e)+2(p_3-e)).
\end{equation*}
Let $\Theta$ be the fundamental class of the theta divisor in $J(C)$. By \cite{ACGH} \S1.5 we have
\begin{equation*}
\deg \Theta^g=g!=6
\end{equation*}
and the dual of the locus of $\OO_C(k(x-e))$ for varying $x\in C$ has class $k^2\Theta$ in $J(C)$. Hence
\begin{eqnarray*}
\deg F&=&\deg F_*F^*([\OO_C])\\
&=&\deg \left(d_2^2\Theta \cdot d_3^2\Theta\cdot 2^2\Theta   \right)\\
&=&24d_2^2d_3^2.
\end{eqnarray*}
As we have chosen the $q_i$ general, the general fibre will contain no points where the $p_i$ coincide with each other or with the $q_i$. Hence we have found all solutions.
\end{proof}

\section{Extremal cycles supported in the boundary}\label{exbdry}
In this section we investigate higher codimension effective cycles supported in the boundary of $\Mgnb$ to show $\text{Eff}^2(\Mgnb)$ is not finite polyhedral for $g\geq3$ and $n\geq g-1$. We follow the methods presented in~\cite{ChenCoskunH} using the infinitely many extremal effective divisors presented in~\cite{Mullane3}. 

Consider the gluing morphism 
\begin{equation*}
\alpha:\widehat{\Delta}_{1:\emptyset}=\overline{\mathcal{M}}_{g-1,n+1}\times \overline{\mathcal{M}}_{1,1}\longrightarrow \Delta_{1:\emptyset}\subset \Mgnb
\end{equation*}
for $g\geq 3$, which glues a $[C,p_1,\dots,p_{n+1}]\in\overline{\mathcal{M}}_{g-1,n+1}$ to $[E,q]\in\overline{\mathcal{M}}_{1,1}$ by identifying $p_{n+1}$ with $q$ to form a node.

The following proposition is presented in~\cite{ChenCoskunH}.

\begin{prop}\label{prod}
Let $X$ and $Y$ be projective varieties such that numerical equivalence and rational equivalence are the same for codimension $k$ cycles in $X$, $Y$ and $X\times Y$ respectively, with $\RR$-coefficients. Suppose $Z$ is an extremal effective cycle of codimension $k$ in $X$. Then $Z\times Y$ is an extremal effective cycle of codimension $k$ in $X\times Y$. 
\end{prop}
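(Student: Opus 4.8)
The plan is to establish two things: first, that $Z \times Y$ is a nonzero effective class of codimension $k$ in $X \times Y$ (which is immediate, since $Z$ is effective in $X$ and the product of effective cycles is effective, and nonvanishing follows because intersecting with a general point of $Y$ recovers $Z \neq 0$); and second, that any effective decomposition of $[Z \times Y]$ forces all pieces to be proportional to $[Z \times Y]$. The key structural tool will be the Künneth-type decomposition of the numerical groups. Since by hypothesis numerical and rational equivalence agree with $\RR$-coefficients for codimension $k$ cycles on $X$, $Y$, and $X \times Y$, I would invoke the standard fact that $N^k(X \times Y) \cong \bigoplus_{i+j=k} N^i(X) \otimes_{\RR} N^j(Y)$ — this requires some care, but for the cycles at hand one can get by with the weaker observation that pullback along the two projections and intersection product land us in the relevant summands, together with the projection formula.

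The main step is the following. Suppose $[Z \times Y] = m_1 W_1 + m_2 W_2$ with $W_1, W_2$ effective of codimension $k$ and $m_1, m_2 > 0$. Let $p : X \times Y \to X$ and $q : X \times Y \to Y$ be the projections. Pick a general point $y \in Y$; restricting to the fiber $X \times \{y\} \cong X$, the class $[Z \times Y]$ restricts to $[Z]$, while $W_i$ restricts to an effective codimension-$k$ cycle $W_i|_{X \times \{y\}}$ on $X$ (the fiber is general, so the restrictions remain effective of the right dimension, using e.g. that the $W_i$ have only finitely many components and a general $y$ avoids the loci where dimensions jump). This gives an effective decomposition $[Z] = m_1 (W_1|_X) + m_2 (W_2|_X)$ in $N^k(X)$. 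Since $Z$ is extremal in $X$, each $W_i|_X$ is proportional to $[Z]$, say $[W_i|_X] = \lambda_i [Z]$ with $\lambda_i \geq 0$ and $m_1\lambda_1 + m_2\lambda_2 = 1$. Now I would use the product decomposition to upgrade this fiberwise proportionality to global proportionality: writing $[W_i] = \sum_{j} a_{ij} \otimes b_{ij}$ under the Künneth decomposition with $a_{ij} \in N^{\bullet}(X)$, $b_{ij} \in N^{\bullet}(Y)$, the restriction to a general fiber $X \times \{y\}$ picks out exactly the $N^k(X) \otimes N^0(Y)$ component (pairing $b_{ij}$ against the class of a point), so the conclusion $[W_i|_X] = \lambda_i[Z]$ pins down that component. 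To control the other Künneth components I would symmetrically restrict $[Z \times Y] = \sum m_i W_i$ to $\{x\} \times Y$ for general $x \in X$: since $Z$ has codimension $k > 0$ in $X$ (if $k=0$ the statement is trivial or handled separately), $[Z \times Y]$ restricts to $0$, forcing each $W_i$ to restrict to $0$ on the general slice $\{x\}\times Y$, which kills the $N^0(X) \otimes N^k(Y)$ component of $[W_i]$. Iterating this slicing argument in both factors — restricting to products of general subvarieties of complementary dimension — controls every Künneth component and forces $[W_i] = \lambda_i [Z \times Y]$.

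The step I expect to be the main obstacle is making the "restrict to a general slice preserves effectiveness and codimension" argument fully rigorous, and correctly identifying which Künneth components survive each restriction. The subtlety is that $W_i$ is only a numerical class; one must choose an effective representative, note it has finitely many irreducible components, and argue that a general translate/slice meets each component properly and in the expected dimension — this is where one genuinely uses that we are working with honest effective cycles and not just numerical classes, and where the hypothesis that numerical equals rational equivalence is doing work (it lets us move freely between the representative and its class). A cleaner alternative, which I would pursue if the slicing bookkeeping becomes unwieldy, is to argue purely at the level of the cone: the Künneth isomorphism identifies $\text{Eff}^k(X \times Y)$ with a cone that contains $\text{Eff}^i(X) \times \text{Eff}^j(Y)$ in each summand, and extremality of $Z \times Y = Z \times [Y]$ reduces to extremality of the ray $\RR_{\geq 0}[Z]$ inside $\text{Eff}^k(X) \subseteq N^k(X) = N^k(X)\otimes N^0(Y)$ together with the fact that this summand is a face of the product cone cut out by the pullback classes $p^* N^k(X)$ — pairing against $q^*$ of effective classes on $Y$ of complementary dimension exhibits $p^*\text{Eff}^k(X)$ as an extracted face. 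Either way, the heart of the matter is the linear-algebra-plus-positivity statement that a product of an extremal ray with the whole of $[Y]$ remains extremal, and the reduction above isolates exactly that.
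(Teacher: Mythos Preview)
The paper does not give a proof; it simply cites \cite[Corollary~2.4]{ChenCoskunH}. Your sketch is along the right lines and is essentially complete for the only case the paper actually uses, namely $k=1$ (pulling back extremal \emph{divisors}) with $Y=\overline{\mathcal{M}}_{1,1}$: there the K\"unneth decomposition has only the two summands $N^1(X)\otimes N^0(Y)$ and $N^0(X)\otimes N^1(Y)$, and your two slicing steps handle both.

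For the proposition as stated with $k\ge 2$, there is a genuine gap exactly where you anticipate it. Restricting to $X\times\{y\}$ extracts only the $N^k(X)\otimes N^0(Y)$ component of each $[W_i]$, and restricting to $\{x\}\times Y$ extracts only the $N^0(X)\otimes N^k(Y)$ component; neither operation sees the intermediate summands $N^{k-j}(X)\otimes N^j(Y)$ with $0<j<k$, and ``iterating this slicing argument'' does not explain how to reach them. Restricting to $A\times B$ for general complete intersections only pairs those components against classes of the form $H_X^a\otimes H_Y^b$, and such products of ample powers need not span the relevant numerical groups, so one cannot conclude vanishing from these pairings alone; your face alternative requires the same missing step. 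One way to close the gap is to use, for each $j>0$, the operators $\gamma\mapsto p_*(\gamma\cdot q^*\delta)$ as $\delta$ ranges over $N^{\dim Y-j}(Y)$: each annihilates all K\"unneth summands except the $j$-th and lands in $N^{k-j}(X)$, and for $\delta$ represented by a cycle in general position relative to the finitely many $W_i$ it takes effective classes to effective classes. Applied to $[Z\times Y]=\sum_s m_s[W_s]$ this yields $0=\sum_s m_s\,p_*([W_s]\cdot q^*\delta)$ with each term effective, forcing each term to vanish; varying $\delta$ over a spanning set of effective classes then kills the $j$-th component of every $[W_s]$, after which your step~1 finishes the argument.
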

\begin{proof}
\cite[Corollary 2.4]{ChenCoskunH}
\end{proof}
Hence pulling back the infinitely many extremal divisors on $\overline{\mathcal{M}}_{g-1,n+1}$ for $g-1\geq 2$ and $n+1\geq (g-1)+1$ presented in Proposition~\ref{Mullane} provides infinitely many extremal effective divisors in $\widehat{\Delta}_{1:\emptyset}$ for $g\geq 3$ and $n\geq g-1$. To show these cycles pushforward to provide extremal codimension-two cycles in $\Mgnb$ we will require more machinery.  

For a morphism $f:X\longrightarrow Y$ between two complete varieties one associates an index to any $Z$, a subvariety of $X$,
\begin{equation*}
e_f(Z)=\dim(Z)-\dim(f(Z)).
\end{equation*}

\begin{prop}\label{higherextremal}
Let $\alpha:Y\longrightarrow X$ be a morphism between two projective varieties. Assume that $A_k(Y)\longrightarrow N_k(Y)$ is an isomorphism and that the composite $\alpha_*:A_k(Y)\longrightarrow A_k(X)\longrightarrow N_k(X)$ is injective. Moreover, assume that $f: X\longrightarrow W$ is a morphism to a projective variety $W$ whose exceptional locus is contained in $\alpha(Y)$. If a $k$-dimensional subvariety $Z\subset Y$ is an extremal cycle in $\text{Eff}_k(Y)$ and if $e_f(\alpha(Z))>0$, then $\alpha(Z)$ is also extremal in $\text{Eff}_k(X)$.
\end{prop}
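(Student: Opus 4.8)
The plan is to argue by contradiction: suppose $\alpha(Z)$ is not extremal in $\text{Eff}_k(X)$, so we may write $[\alpha(Z)] = [A] + [B]$ in $N_k(X)$ with $A,B$ effective $k$-cycles, neither proportional to $\alpha(Z)$. The idea is to push this relation down to $W$ via $f$, use the hypothesis $e_f(\alpha(Z))>0$ to see that $\alpha(Z)$ dies in $W$, and conclude that $A$ and $B$ must also be supported on the exceptional locus of $f$, hence on $\alpha(Y)$; then we pull back to $Y$ (using injectivity of $\alpha_*$ together with $A_k(Y)\cong N_k(Y)$) to transfer the decomposition to $Y$, contradicting extremality of $Z$ there.

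In more detail, the first step is the observation that $f_*[\alpha(Z)] = 0$ in $N_k(W)$: since $e_f(\alpha(Z)) = \dim \alpha(Z) - \dim f(\alpha(Z)) > 0$, the image $f(\alpha(Z))$ has dimension strictly less than $k$, so the $k$-cycle pushforward vanishes. Thus $f_*[A] = -f_*[B]$ in $N_k(W)$. Here one must be slightly careful: $A$ and $B$ are effective, so $f_*[A]$ and $f_*[B]$ are effective (or zero), and an effective class equal to the negative of an effective class forces both to vanish — this uses that $f_*[A] + f_*[B] = f_*[\alpha(Z)] = 0$ and effectivity. Hence $f_*[A] = f_*[B] = 0$ in $N_k(W)$. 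The second step is to upgrade this numerical statement to a statement about supports: a $k$-dimensional component of $A$ not contracted by $f$ would contribute a nonzero effective class to $f_*[A]$, and since all such contributions are effective they cannot cancel; therefore every $k$-dimensional component of $A$ is contracted by $f$, i.e. lies in the exceptional locus of $f$, which by hypothesis is contained in $\alpha(Y)$. The same holds for $B$. So $A$ and $B$ are supported on $\alpha(Y)$.

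The third step is to lift back to $Y$. Since $A$ and $B$ are supported on $\alpha(Y)$, there are effective $k$-cycles $A', B'$ on $Y$ with $\alpha_* A' = A$, $\alpha_* B' = B$ (choosing preimage components with appropriate multiplicities; effectivity is preserved). Then $\alpha_*([Z] - [A'] - [B']) = 0$ in $N_k(X)$, and by the injectivity hypothesis on $\alpha_* : A_k(Y)\to N_k(X)$ combined with $A_k(Y)\cong N_k(Y)$ we get $[Z] = [A'] + [B']$ in $N_k(Y)$. By extremality of $Z$ in $\text{Eff}_k(Y)$, both $A'$ and $B'$ are proportional to $Z$; pushing forward, $A$ and $B$ are proportional to $\alpha(Z)$, contradicting our assumption. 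This completes the argument.

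The main obstacle I expect is the support/effectivity bookkeeping in steps one and two — namely making rigorous the passage from the numerical identity $f_*[A] + f_*[B] = 0$ to the geometric conclusion that no $k$-dimensional component of $A$ or $B$ escapes the exceptional locus. One needs that pushforward of an effective cycle along a morphism is effective and that a sum of effective classes vanishing numerically forces each summand to vanish, which is clean, but one also needs to know that a genuinely $k$-dimensional subvariety dominating its image under $f$ gives a nonzero class in $N_k(W)$ — this is where properness of $f$ and the structure of numerical equivalence are used. The lifting in step three is routine once the support statement is in hand, and the injectivity hypothesis is exactly tailored to make it work.
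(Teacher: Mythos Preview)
The paper does not give its own proof of this proposition; it simply cites \cite[Proposition~2.5]{ChenCoskunH}. Your argument is correct and is essentially the argument given there: push forward along $f$ to kill $[\alpha(Z)]$, use salience of the effective cone in $N_k(W)$ (via intersection with a power of an ample class) to force $f_*[A]=f_*[B]=0$, deduce that every component of $A$ and $B$ is contracted and hence lies in $\alpha(Y)$, lift to $Y$, and invoke the injectivity hypothesis.

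One small point worth tightening in your step three: when you say ``choosing preimage components with appropriate multiplicities,'' note that an irreducible component of $\alpha^{-1}(V)$ need not be $k$-dimensional if $\alpha$ is not generically finite onto its image. What you actually need is, for each $k$-dimensional component $V$ of $A$, a $k$-dimensional subvariety $V'\subset Y$ with $\alpha(V')=V$; this always exists (take successive general hyperplane sections of an irreducible component of $\alpha^{-1}(V)$ dominating $V$), and then $\alpha_*[V']$ is a positive multiple of $[V]$, so with $\mathbb{R}$-coefficients you can arrange $\alpha_*A'=A$. In the application in the paper $\alpha$ is birational onto its image, so this subtlety does not arise, but the proposition as stated covers the general case.
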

\begin{proof}
\cite[Proposition 2.5]{ChenCoskunH}
\end{proof}
We apply this proposition to the situation $Y=\widehat{\Delta}_{1:\emptyset}$ and $X=\Mgnb$ where $f$ is the morphism
\begin{equation*}
ps:\Mgnb\longrightarrow \Mgnb^{ps}
\end{equation*}
that contracts unmarked elliptic tails to cusps. Indeed, the exceptional locus of $ps$ is $\Delta_{1:\emptyset}$. It remains to show that $\alpha_*:N^1(\widehat{\Delta}_{1:\emptyset})\longrightarrow N^2(\Mgnb)$ is injective. To this end, we introduce a number of test surfaces in $\Mgnb$. 

Consider the following test surfaces
\begin{itemize}
\item
$S^a$: Fix a general smooth curve $[C,q_1,\dots,q_{n}]\in\mathcal{M}_{g-1,n}$. Form the surface by attaching a general pencil of plane cubics at a base point to $q_{1}$ to form a node. Label $q_2,\dots, q_{k}$ as $p_2,\dots, p_k$ and allow the point $p_1$ to vary in the pencil. See Figure 1.
\begin{center}
\vspace{0.5cm}
\begin{overpic}[width=0.30\textwidth]{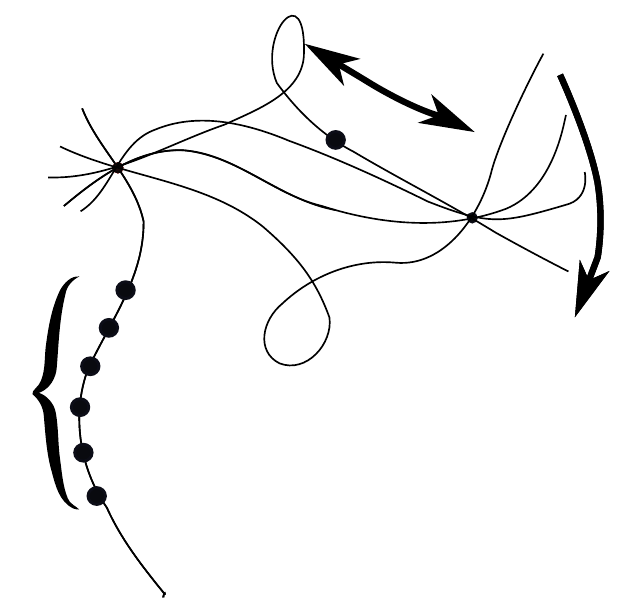}
\put (30, 3){$C$}
\put (58, 76){$p_1$}
\put (102, 64){Pencil of plane cubics}
\put (-28, 33){$p_2,\dots,p_n$}
\put (20,-10){\footnotesize{Figure 1: Test surface $S^a$.}}
\end{overpic}
\end{center}
\vspace{01cm}

\item
$S^b_{0:k}$: Fix $1\leq k\leq n$. Fix general smooth curves $[X,q_1,\dots,q_{k+1}]\in \mathcal{M}_{0,k+1}$, 
$[C,q'_1,\dots, q'_{n-k}]\in\mathcal{M}_{g-1,n-k}$ 
and $[E,q]\in\mathcal{M}_{1,1}$. Form the surface by attaching $q_{1}$ to $q$ and $q_{k+1}$ to a point $q'$ that varies freely in $C$, to form nodes. Label $q_2,\dots, q_{k}$ as $p_2,\dots, p_k$ and $q'_1,\dots,q'_{n-k}$ as $p_{k+1},\dots,p_{n}$. Allow the point $p_1$ to vary in  $E$. See Figure 2.
\begin{center}
\vspace{0.5cm}
\begin{overpic}[width=0.3\textwidth]{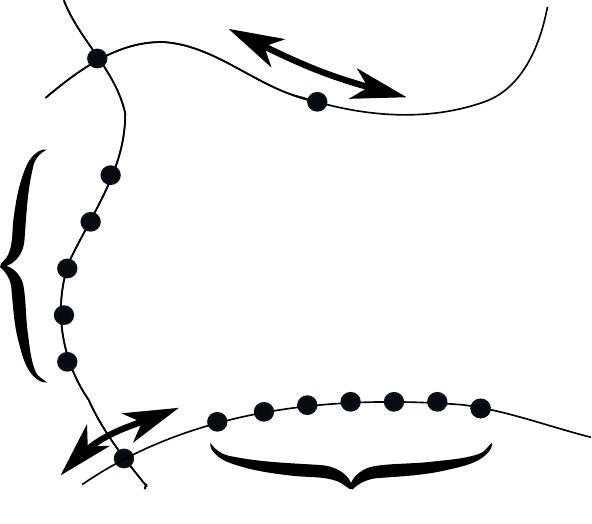}
\put (20, 44){$X$}
\put (95, 18){$C$}
\put (52, 62){$p_1$}
\put (82, 64){$E$}
\put (-38, 44){$p_2,\dots,p_k$ }
\put (45, 0){$p_{k+1},\dots,p_n$}
\put (20,-15){\footnotesize{Figure 2: Test surface $S^b_{0:k}$.}}
\end{overpic}
\end{center}
\vspace{1cm}
\item
$S^c$: Fix general smooth curves 
$[C,q_1,\dots, q_{n-1}]\in\mathcal{M}_{g-1,n-1}$ 
and $[E,q]\in\mathcal{M}_{1,1}$. Form the surface by attaching $q_{n-1}$ to $q$ to form a node and labelling $q_1,\dots,q_{n-2}$ as $p_3,\dots,p_n$. Allow $p_1$ and $p_2$ to vary freely in $E$ and $C$ respectively. See Figure 3.
\begin{center}
\vspace{0.5cm}
\begin{overpic}[width=0.3\textwidth]{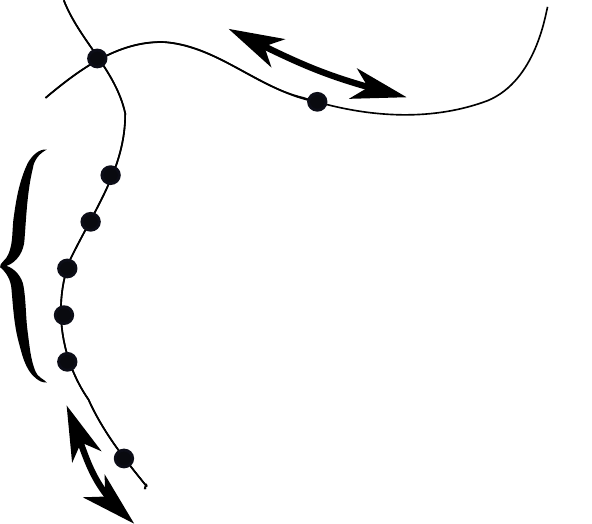}
\put (20, 44){$C$}
\put (52, 62){$p_1$}
\put (28, 11){$p_2$}
\put (82, 64){$E$}
\put (-38, 44){$p_3,\dots,p_n$ }
\put (20,-10){\footnotesize{Figure 3: Test surface $S^c$.}}
\end{overpic}
\end{center}
\vspace{0.5cm}
\item
$S^d_{0:k}$: Fix $3\leq k\leq n$. Fix general smooth curves $[X,q_1,\dots,q_{k}]\in\mathcal{M}_{0,k}$, $[C,q'_1,\dots, q'_{n-k+1}]\in\mathcal{M}_{g-1,n-k+1}$ and $[E,q]\in\mathcal{M}_{1,1}$. Form the surface by attaching the point $q'_{n-k+1}$ to a point $x$ that varies in $X$ to form a node. Label the points $q_2,\dots,q_{k}$ as $p_2,\dots,p_k$ and the points $q'_1,\dots,q'_{n-k}$ as $p_{k+1},\dots,p_n$. Attach the point $q$ to $q_1$ to form a node and allow $p_1$ to vary freely in $E$. See Figure 4.
\begin{center}
\begin{overpic}[width=0.3\textwidth]{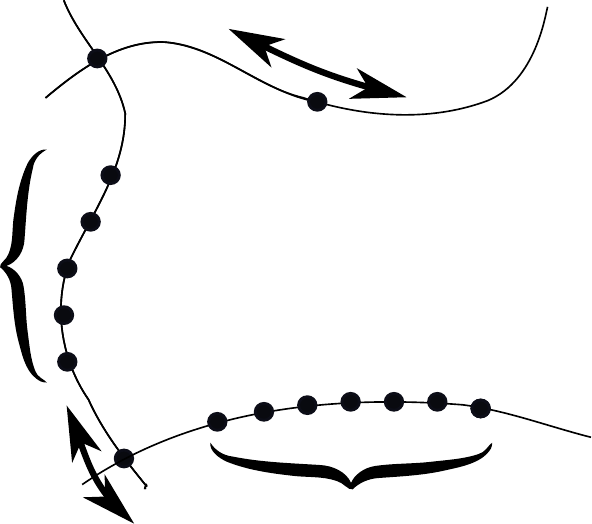}
\put (20, 44){$X$}
\put (95, 18){$C$}
\put (52, 62){$p_1$}
\put (82, 64){$E$}
\put (-38, 44){$p_2,\dots,p_k$ }
\put (45, 0){$p_{k+1},\dots,p_n$}
\put (20,-15){\footnotesize{Figure 4: Test surface $S^d_{0:k}$.}}
\end{overpic}
\end{center}
\vspace{1cm}

\item
$S^e_{h:k}$:  Fix $0\leq h\leq g-1$ and $1\leq k\leq n$ and further require if $h=0$ then $k\geq 3$, if $h=1$ then $k\geq 2$ and if $h=g-1$ then $k\leq n-1$. Fix general smooth curves $[C,q_1,\dots,q_{n-k+1},q]\in\mathcal{M}_{g-h-1,n-k+2}$ and $[C',q'_1,\dots,q'_{k}]\in\mathcal{M}_{h,k}$. Form the surface by identifying the point $q_{n-k+1}$ with the point $q'_{1}$, and the point  $q$ with a point $p$ that varies in $C$ to form nodes. Label $q_1,\dots,q_{n-k}$ as $p_{k+1},\dots,p_{n}$ and label $q'_2,\dots,q'_{k}$ as $p_2,\dots,p_k$. Let $p_1$ vary in $C'$. See Figure 5.
\begin{figure}[htbp]
\begin{center}
\begin{overpic}[width=0.3\textwidth]{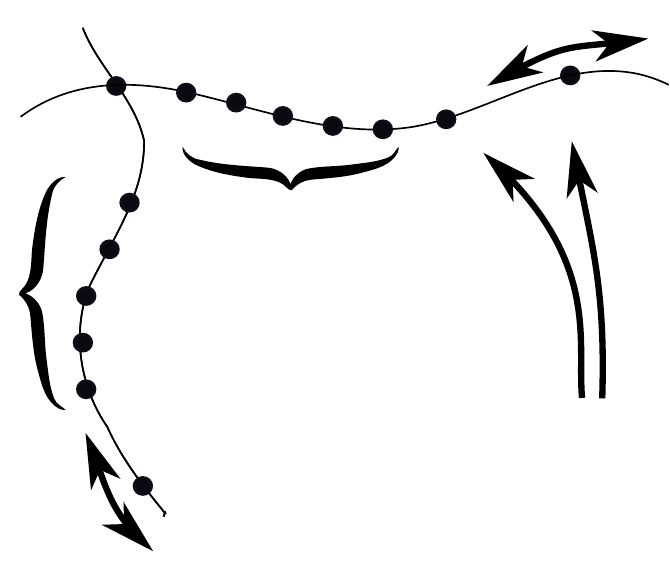}
\put (102, 70){$C$}
\put (24, 51){ $p_{k+1},\dots,p_n$ }
\put (82, 66){$p$}
\put (66, 61){$q$}
\put (65, 20){Identify $p$ and $q$}
\put (65, 10){to form a node}
\put (-33, 42){$p_2,\dots,p_k$}
\put (26, 14){$p_1$}
\put (25, 0){$C'$}
\put (20,-10){\footnotesize{Figure 5: Test surface $S^e_{h:k}$.}}
\end{overpic}
\end{center}
\vspace{0.5cm}
\end{figure}
\pagebreak
\item
$S^f_{h:k}$: Fix $0\leq h\leq g-2$ and $2\leq k\leq n$. Fix general smooth curves $[C,q_1,\dots,q_k]\in\mathcal{M}_{h,k}$, $[C',q'_1,\dots,q'_{n-k+2}]\in\mathcal{M}_{g-h-2,n-k+2}$ and $[E,q]\in\mathcal{M}_{1,1}$. Form the surface by attaching $q_{1}$ and $q'_{1}$ to distinct base points of a general pencil of plane cubics and $q'_{2}$ to $q$ to form nodes and labelling $q_2,\dots,q_k$ as $p_2,\dots,p_k$ and $q'_3,\dots,q'_{n-k+2}$ as $p_{k+1},\dots,p_n$. Allow $p_1$ to vary freely in $E$. See Figure 6.
\begin{figure}[htbp]
\begin{center}
\begin{overpic}[width=0.35\textwidth]{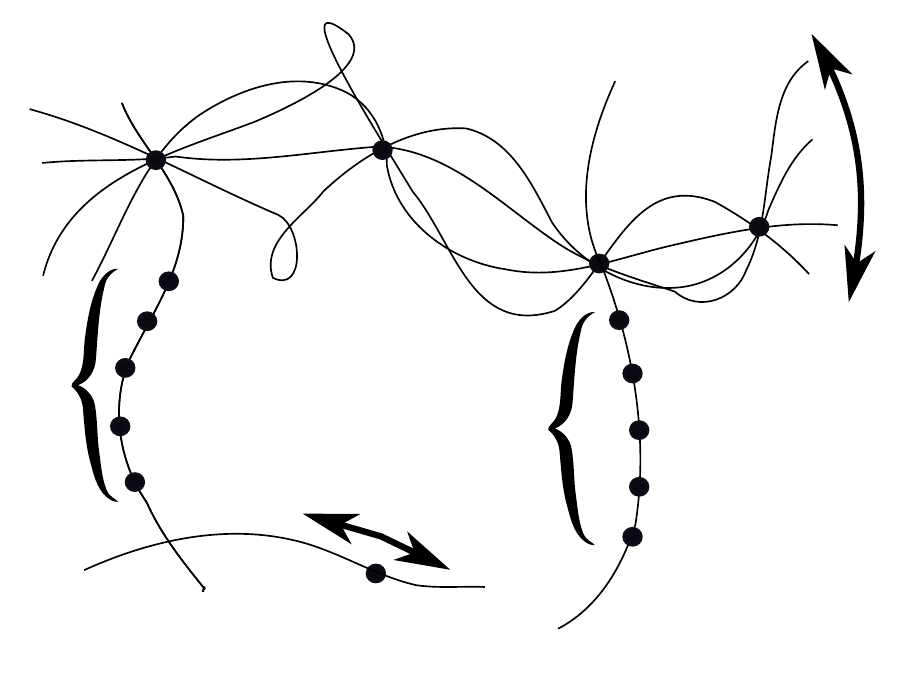}
\put (102, 56){Pencil of plane cubics}
\put (70, 5){$C$}
\put (38, 22){$p_1$}
\put (44, 2){$E$}
\put (24, 5){$C'$}
\put (-28, 31){$p_{k+1},\dots,p_n$ }
\put (32, 31){$p_{2},\dots,p_k$ }
\put (20,-10){\footnotesize{Figure 6: Test surface $S^f_{h:k}$.}}
\end{overpic}
\end{center}
\end{figure}
\vspace{0.5cm}
 \item
$S^g$: Fix general smooth curves $[C,q_1,\dots,q_{n+1}]\in\mathcal{M}_{g-3,n+1}$ and $[E,q]\in\mathcal{M}_{1,1}$. Form the surface by attaching a general pencil of plane cubics at a base point to $q_{n+1}$ and attaching $q_1$ to $q$ to form nodes.  Attach two other base points of the pencil together to form a node. Label points $q_2,\dots,q_{n}$ as $p_2,\dots,p_n$. Allow the point $p_1$ to vary freely in $E$. See Figure 7.
\begin{figure}[htbp]
\begin{center}
\begin{overpic}[width=0.35\textwidth]{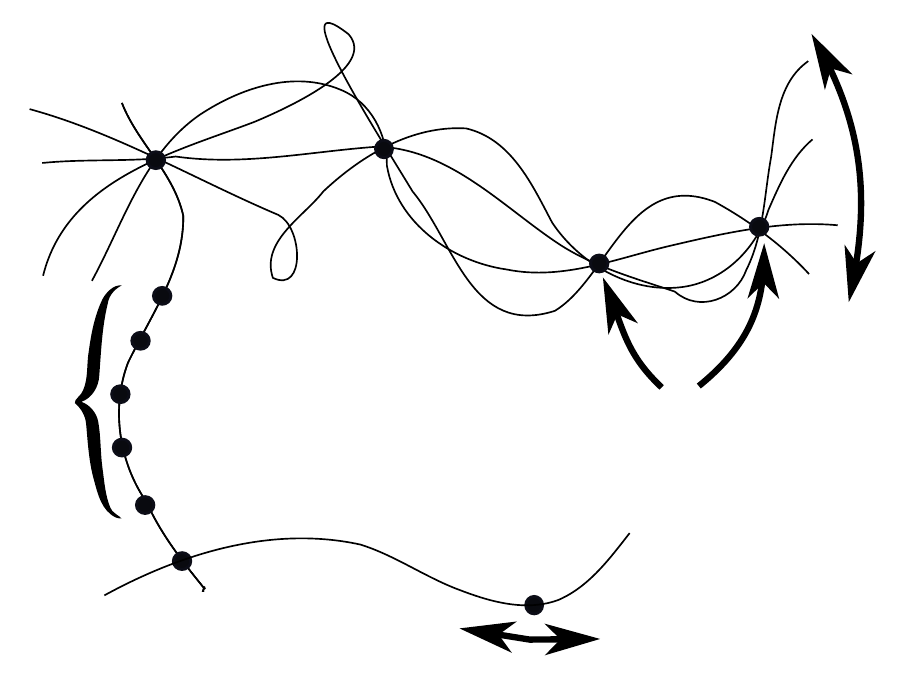}
\put (102, 56){Pencil of plane cubics}
\put (65, 26){Identify two base points to form a node}
\put (56, 13){$p_1$}
\put (70, 10){$E$}
\put (24, 5){$C$}
\put (-23, 30){$p_2,\dots,p_n$ }

\put (20,-10){\footnotesize{Figure 7: Test surface $S^g$.}}
\end{overpic}
\end{center}
\vspace{0.5cm}
\end{figure}
\pagebreak
\item
$S^h$: For $g\geq 5$ fix general smooth curves $[C,q_1,\dots,q_{n-1}]\in\mathcal{M}_{g-5,n-1}$, $[E',p,q']\in\mathcal{M}_{1,2}$,  and $[E,q]\in\mathcal{M}_{1,1}$. Form the surface by attaching $p,q_{1}$ and $q$ to distinct base points of a general pencil of plane quartics to form nodes and labelling $q'$ as $p_2$ and $q_2,\dots,q_{n-1}$ as $p_3,\dots,p_n$. Allow the point $p_1$ to move freely in $E$. See Figure 8. 
\begin{center}
\begin{overpic}[width=0.35\textwidth]{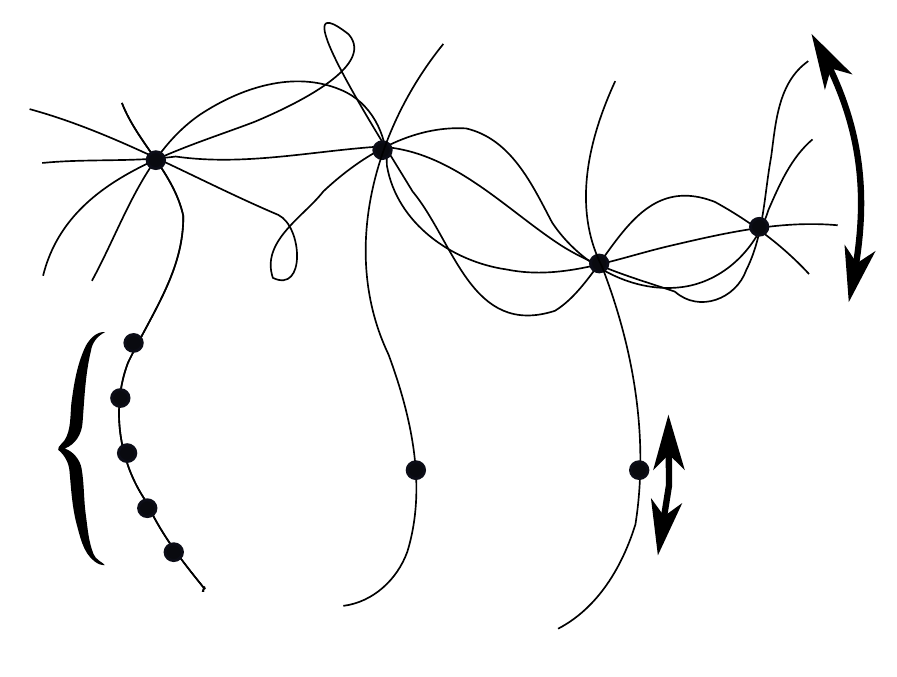}
\put (102, 56){Pencil of plane quartics}
\put (62, 22){$p_1$}
\put (70, 5){$E$}
\put (38, 22){$p_2$}
\put (44, 5){$E'$}
\put (24, 5){$C$}
\put (-25, 25){$p_3,\dots,p_n$ }
\put (20,-10){\footnotesize{Figure 8: Test surface $S^{h}$.}}
\end{overpic}
\end{center}
\vspace{0.5cm}
\end{itemize}

Define the cycle $\Gamma$ as the image under $\alpha$ of $\overline{\mathcal{M}}_{g-1,n+1}\times \delta_0$ and $\Gamma_{i:S},\Gamma_{K_i},\Gamma_\lambda, \Gamma_0$ as the image under $\alpha$ of the pullback of $\delta_{i:S}, K_i,\lambda$ and $\delta_0$ respectively, under the forgetful morphism 
\begin{equation*}
\widehat{\Delta}_{1:\emptyset}=\overline{\mathcal{M}}_{g-1,n+1}\times \overline{\mathcal{M}}_{1,1}\longrightarrow \overline{\mathcal{M}}_{g-1,n+1},
\end{equation*}
where $K_i$ is defined as $\pi_i^*\psi$ where $\pi_i:\overline{\mathcal{M}}_{g-1,n+1}\longrightarrow \overline{\mathcal{M}}_{g-1,1}$ is the projection to the $i$th point.

Since $\overline{\mathcal{M}}_{1,1}\cong \PP^1$ we have $A^1(\widehat{\Delta}_{1:\emptyset})\cong N^1(\widehat{\Delta}_{1:\emptyset})$ and is generated by the classes of the cycles $\Gamma,\Gamma_\lambda,\Gamma_0,\Gamma_{i:S},\Gamma_{K_i}$  for $g\geq 4$, when $g=3$ we omit $\Gamma_0$ to account for the relation between $\lambda$ and the boundary classes in $\overline{\mathcal{M}}_{2,n+1}$.

\begin{prop}\label{SurfacesIntersection}
Let $\gamma,\gamma_\lambda,\gamma_0,\gamma_{i:S},\gamma_{K_i}$ denote the classes in $\Mgnb$ of the cycles $\Gamma,\Gamma_\lambda,\Gamma_0,\Gamma_{i:S},\Gamma_{K_i}$ respectively. The surfaces have the following intersection numbers.
\begin{eqnarray*}
&S^a\cdot \gamma=12,\hspace{1cm} S^a\cdot \gamma_{K_{n+1}}=1,&\\
&S^b_{0:k}\cdot \gamma_{0:\{1,\dots,k\}\cup\{j,n+1\}}=1\hspace{0.6cm}\text{for $j=k+1,\dots,n$},\hspace{1cm}S^b_{0:k}\cdot \gamma_{0:\{1,\dots,k\}\cup\{n+1\}}=2-2(g-1)-(n+1-k),&\\
&S^b_{0:k}\cdot \gamma_{K_j}=2g-2\hspace{0.6cm}\text{ for $j=1,\dots,k$ and $j=n+1,$}&\\
&S^c\cdot\gamma_{0:\{2,j\}}=1,\hspace{0.3cm}\text{for $j=3,\dots,n$,} \hspace{0.5cm}S^c\cdot\gamma_{0:\{1,n+1\}}=-1, \hspace{0.5cm}S^c\cdot\gamma_{0:\{1,2,n+1\}}=1, \hspace{0.5cm}S^c\cdot\gamma_{K_2}=2(g-1)-2, &\\ 
&S^d_{0:k}\cdot\gamma_{g-1:\{k+1,\dots,n\}\cup\{j\}}=1\text{ for $j=2,\dots,k$},\hspace{0.5cm} S^d_{0:k}\cdot\gamma_{g-1:\{k+1,\dots,n\}}=2-k, \hspace{0.5cm}S^d_{0:k}\cdot\gamma_{0:\{1,n+1\}}=-1, &\\
&S^d_{0:k}\cdot\gamma_{0:\{2,\dots,k\}}=1, &\\
&S^e_{h:k}\cdot \gamma_{h:\{1,\dots,k\}}=-1,\hspace{1cm} S^e_{h:k}\cdot \gamma_{h:\{2,\dots,k\}}=1,\hspace{1cm} S^e_{h:k}\cdot \gamma_{0:\{1,j\}}=1\hspace{0.3cm}\text{for $j=2,\dots,k$},&\\
&S^e_{h:k}\cdot\gamma_{K_1}=\begin{cases} 2(g-1)-2 &\text{ for $h=g-1$},\\
 2h-1 &\text{ for $h=1,\dots,g-2$},\\ 
0&\text {for $h=0$} \end{cases}&\\
&S^f_{h:k}\cdot\gamma_{h:\{2,\dots,k\}}=-1,\hspace{0.5cm} S^f_{h:k}\cdot\gamma_{h+1:\{2,\dots,k\}}=-1, \hspace{0.5cm}S^f_{h:k}\cdot\gamma_\lambda=1, \hspace{0.5cm}S^f_{h:k}\cdot\gamma_0=12, &\\
&S^f_{h:k}\cdot\gamma_{K_j}=\begin{cases} 1 &\text{ for $h=0$, $j=2,\dots,k$},\\
 1 &\text{ for $h=g-2$, $j= 1,k+1,\dots,n+1$},\\ 
0&\text {otherwise,} \end{cases}&\\
\end{eqnarray*}
\begin{eqnarray*}
&S^g\cdot\gamma_\lambda=1, \hspace{0.3cm}S^g\cdot\gamma_0=10, \hspace{0.3cm}S^g\cdot\gamma_{2:\emptyset}=-1,  \hspace{0.3cm}S^g\cdot\gamma_{K_j}=\begin{cases}1&\text{ for $g=3$ for all $j$},\\ 0 &\text{ otherwise},  \end{cases}&\\
&S^h\cdot\gamma_{1:\{2\}}=-1, \hspace{1cm}S^h\cdot\gamma_{4:\{1,2,n+1\}}=-1, \hspace{1cm}S^h\cdot\gamma_{0:\{1,n+1\}}=-1,&\\
&S^h\cdot\gamma_{K_j}=\begin{cases} 1 &\text{ for $j=1,n+1$ for all $g\geq 5$},\\
 1 &\text{ for $j=3,\dots,n$ when $g=5$},\\ 
0&\text {otherwise}, \end{cases}&\\
& \hspace{1cm}S^h\cdot\gamma_{\lambda}=3, \hspace{1cm}S^h\cdot\gamma_{0}=27.&\\
\end{eqnarray*}
All other intersections are zero.
\end{prop}

\begin{proof}
The intersection of each surface with $\Delta_{1,\emptyset}$ is transverse providing a curve to intersect with $\Gamma,\Gamma_\lambda,\Gamma_0,\Gamma_{i:S},\Gamma_{K_j}$ inside $\Delta_{1,\emptyset}$. The intersection numbers are a simple exercise in intersection theory~\cite{HarrisMorrison}.  Here we provide the example of the intersection numbers for the surface $S^d_{0:k}$. First observe that the only non-zero intersections are with classes
\begin{equation*}
\gamma_{0:\{1,n+1\}}, 
 \hspace{0.5cm}\gamma_{0:\{2,\dots,k\}},  \hspace{0.5cm} \gamma_{g-1:\{k+1,\dots,n\}},\hspace{0.5cm} \gamma_{g-1:\{k+1,\dots,n\}\cup\{j\}}\text{ for $j=2,\dots,k$}.
\end{equation*}
Hence
\begin{eqnarray*}
&S^d_{0:k}\cdot \gamma_{0:\{1,n+1\}}=S^d_{0:k}\cdot \delta_{1:\emptyset}\cdot \delta_{1:\{1\}}, 
 \hspace{0.5cm}S^d_{0:k}\cdot \gamma_{0:\{2,\dots,k\}}=S^d_{0:k}\cdot \delta_{1:\emptyset}\cdot\delta_{0:\{2,\dots,k\}},& \\ &S^d_{0:k}\cdot \gamma_{g-1:\{k+1,\dots,n\}}=S^d_{0:k}\cdot \delta_{1:\emptyset}\cdot \delta_{g-1:\{k+1,\dots,n\}},&\\
 &S^d_{0:k}\cdot \gamma_{g-1:\{k+1,\dots,n\}\cup\{j\}}=S^d_{0:k}\cdot \delta_{1:\emptyset}\cdot\delta_{g-1:\{k+1,\dots,n\}\cup\{j\}} \text{ for $j=2,\dots,k$},&
\end{eqnarray*}
as $\gamma_{0:\{2,\dots,k\}}= \delta_{1:\emptyset}\cdot\delta_{0:\{2,\dots,k\}}$ and $\gamma_{0:\{1,n+1\}}$ forms the only component of $\delta_{1:\emptyset}\cdot \delta_{1:\{1\}}$ that has non-zero intersection with $S^d_{0:k}$ (similarly for $ \gamma_{g-1:\{k+1,\dots,n\}}$ and $\gamma_{g-1:\{k+1,\dots,n\}\cup\{j\}}$ for $j=2,\dots,k$).

Further, we denote by $B$ the curve $S^d_{0:k}\cdot \delta_{1:\emptyset}$ formed by fixing the point $p_1$ in the surface $S^d_{0:k}$ to sit on a rational bridge between $E$ and $X$. Hence 
\begin{equation*}
S^d_{0:k}\cdot \gamma_{0:\{2,\dots,k\}}=B\cdot\delta_{0:\{2,\dots,k\}}=1
\end{equation*}
and
\begin{equation*}
S^d_{0:k}\cdot \gamma_{g-1:\{k+1,\dots,n\}\cup\{j\}}=B\cdot\delta_{g-1:\{k+1,\dots,n\}\cup\{j\}} =1
\end{equation*}
for $j=2,\dots,k$.

Let $S$ be the surface obtained by blowing up $X\times X$ at the points $(q_i,q_i)$ for $i=1,\dots,k$ and let $\pi:S\longrightarrow X\times X$ be the blowdown. If we let $T=X\times\{q_1\}$ in $X\times X$ and $\widetilde{T}$ the proper transform in $S$ then
\begin{equation*}
B\cdot \delta_{0\{1,n+1\}}=\deg(\mathcal{N}_{\widetilde{T}/S}\otimes\mathcal{N}_{X\times\{q\}/X\times E})=\widetilde{T}^2=-1.
\end{equation*}
Further, if $\Delta$ denotes the diagonal in $X\times X$ and $\widetilde{\Delta}$ the proper transform in $S$ we have
\begin{equation*}
B\cdot\delta_{g-1:\{k+1,\dots,n\}}=\deg(\mathcal{N}_{\widetilde{\Delta}/S}\otimes \mathcal{N}_{X\times\{q'_{n-k+1}\}/X\times C})=\widetilde{\Delta}^2=2-k.
\end{equation*}
\end{proof}

\begin{prop}\label{injective}
$\alpha_*: N^1(\widehat{\Delta}_{1:\emptyset})\longrightarrow N^2(\Mgnb)$ is injective for $g\geq3$ and $n\geq g-1$.
\end{prop}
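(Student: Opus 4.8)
Since $\overline{\mathcal{M}}_{1,1}\cong\PP^1$, the K\"unneth formula together with $A^1=N^1$ on $\overline{\mathcal{M}}_{g-1,n+1}$ gives $A^1(\widehat{\Delta}_{1:\emptyset})=N^1(\widehat{\Delta}_{1:\emptyset})$ with $\RR$-basis $\Gamma,\Gamma_\lambda,\Gamma_0$ (drop $\Gamma_0$ when $g=3$), $\Gamma_{K_1},\dots,\Gamma_{K_{n+1}}$ and the pulled-back boundary classes $\Gamma_{h:S}$. The plan is to assume a relation
\[
\alpha_*\Bigl(c\,\Gamma+c_\lambda\Gamma_\lambda+c_0\Gamma_0+\sum_{i=1}^{n+1}c_{K_i}\Gamma_{K_i}+\sum_{h,S}c_{h:S}\Gamma_{h:S}\Bigr)=0\quad\text{in }N^2(\Mgnb)
\]
and to force every coefficient to vanish. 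The starting observation is that each $\alpha_*\Gamma_\bullet$ is represented by a cycle supported on $\Delta_{1:\emptyset}$, so pairing the relation against any of the test surfaces $S$ of Proposition~\ref{SurfacesIntersection} --- each of which meets $\Delta_{1:\emptyset}$ transversally --- converts $\alpha_*(\cdot)=0$ into an explicit linear equation in the $c_\bullet$ whose coefficients are read off the intersection table.

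\textbf{Triangulating the resulting system.} Rather than inverting the whole system I would peel off coefficients in stages. First the boundary coefficients $c_{h:S}$, by induction on $|S|$ with $h=0$ separated from $h\ge 1$: the family $S^e_{h:P}$ pairs to $-1$ with $\gamma_{h:P}$ and otherwise only against classes ($\gamma_{h:P\setminus\{j\}}$ and the two-pointed $\gamma_{0:\{j,m\}}$) already handled at an earlier stage, so it determines $c_{h:P}$; the families $S^b_{0:P},S^c,S^d_{0:P}$ and (for $g\ge 5$) $S^h$ reach the remaining boundary types, including the distinguished ones $\delta_{0:\{i,n+1\}},\delta_{1:\{j\}},\delta_{g-4:\cdots}$ that enter with signs. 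Next I would read off $c_\lambda$ and $c_0$ from $S^f_h,S^g,S^h$, whose $\gamma_\lambda$- and $\gamma_0$-columns are nonzero and whose other entries are by then known. Finally $S^a$ gives $12c+c_{K_{n+1}}=0$, and the $\gamma_{K_j}$-columns of $S^f_h$ ($h=0,g-2$), $S^g$, $S^h$ and $S^d_{0:P}$ (with $n+1\in P$) give the remaining equations tying $c$ to the $c_{K_i}$.

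\textbf{Clean-up by pushforward.} In the extremal ranges $n=g-1$ and $g=3,4$ several of the families above fall outside their admissibility bounds, and the test surfaces may fail to separate a few coefficients --- chiefly $c$ and $c_{K_{n+1}}$, and some boundary types. There I would push the relation $\alpha_*(\cdot)=0$ forward. Under a point-forgetting morphism $\pi_j:\Mgnb\to\overline{\mathcal{M}}_{g,n-1}$ each $\alpha_*\Gamma_\bullet$ maps either to $0$ or to the corresponding $\alpha_*$-class on a moduli space with one fewer marked point, so the pushed-forward relation has the same shape with $n$ decreased and vanishes by induction, the base case being checked by hand. Under $ps:\Mgnb\to\Mgnb^{ps}$, whose exceptional locus is exactly $\Delta_{1:\emptyset}$, every $\alpha_*\Gamma_\bullet$ whose support contains the whole $\overline{\mathcal{M}}_{1,1}$-factor is contracted to lower dimension and dies, so the relation collapses to $c\cdot ps_*\alpha_*\Gamma=0$; since $ps$ does not contract a nodal (as opposed to smooth) elliptic tail, $ps\circ\alpha$ is birational onto its image on $\Gamma=\overline{\mathcal{M}}_{g-1,n+1}\times\Delta_0$, so $ps_*\alpha_*\Gamma$ is a nonzero class and hence $c=0$ and, via $S^a$, $c_{K_{n+1}}=0$. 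Together with the system of the previous step this forces all $c_\bullet=0$.

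\textbf{Where the work is.} The delicate part is the bookkeeping of the middle step: $N^1(\overline{\mathcal{M}}_{g-1,n+1})$ carries a boundary generator $\delta_{h:S}$ for essentially every admissible pair $(h,S)$ --- an exponentially long list --- and one must verify that the families $S^b,S^c,S^d,S^e,S^h$, with all of their genus and marked-point restrictions ($|P|\ge 3$ if $h=0$, $|P|\ge 2$ if $h=1$, $|P|\le n-1$ if $h=g-1$, $g\ge 5$ for $S^h$, $n\ge 3,\ |P|\ge 4$ for $S^d$, plus the further constraints when $g\le 4$ or $n=g-1$), jointly detect every coefficient, and that the genuinely non-diagonal intersection table can be triangulated in the claimed order. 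The forgetful-and-$ps$ pushforward is the safety net for precisely those coefficients and edge cases the surfaces leave undetermined.
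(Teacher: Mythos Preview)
Your overall strategy --- pair the putative relation against the test surfaces of Proposition~\ref{SurfacesIntersection} and supplement with pushforwards along $ps$ and forgetful maps --- is exactly the paper's. But several of your execution details are off, and one simplification the paper makes is missing from your sketch.

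First, the paper immediately averages the relation under the $S_n$-action permuting the first $n$ points, so that $c_{K_i}$ depends only on whether $i=n{+}1$ and $c_{h:S}$ depends only on $(h,|S|,$ whether $n{+}1\in S)$. This collapses the ``exponentially long list'' you worry about to a short one and is what makes the linear system tractable; without it your proposed induction on $|S|$ via $S^e_{h:P}$ does not triangulate, because you have omitted the $\gamma_{K_j}$-entry of $S^e_{h:P}$ (which is nonzero for $h\geq 1$), so the boundary coefficients cannot be peeled off before the $c_{K_j}$. The paper instead kills $c$ and $c_{K_{n+1}}$ \emph{first} (via $ps$ and $S^a$), then solves a small symmetrized system in $c_K,c_{0:2},c_{g-1:k}$ using $S^b,S^c,S^d$ and one forgetful pushforward, and only afterwards propagates via $S^e,S^f,S^g,S^h$.

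Second, two of your clean-up arguments are not right as stated. The map $ps$ \emph{does} contract nodal elliptic tails (the whole $\overline{\mathcal{M}}_{1,1}$-fiber is collapsed); the correct reason $ps_*\alpha_*\Gamma\neq 0$ is that $\Gamma=\overline{\mathcal{M}}_{g-1,n+1}\times\{\text{pt}\}$ is a section of this $\PP^1$-contraction and hence maps birationally onto the cuspidal locus. And the forgetful pushforward $\pi_i$ does not produce a relation ``of the same shape with $n$ decreased'': $\pi_{i*}$ sends your codimension-two classes to \emph{divisor} classes in $\overline{\mathcal{M}}_{g,n-1}$ (for instance $\pi_{i*}\gamma_{0:\{i,j\}}=\delta_{1:\emptyset}$ and $\pi_{i*}\gamma_{K_i}=(2g-4)\delta_{1:\emptyset}$, all others zero), so there is no induction on $n$ --- what you get is the single scalar equation~(\ref{push}), which the paper uses alongside the test-surface equations.
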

\begin{proof}
Consider a non-trivial relation on the classes  in $\Mgnb$
\begin{equation}\label{relation}
c_\gamma\gamma+c_\lambda\gamma_\lambda+c_0\gamma_0+\sum_{i,S} c_{i:S}\gamma_{i:S}+\sum_{j=1}^{n+1}c_{K_j}\gamma_{K_j}=0,
\end{equation}
where for $g=3$ we assume $c_0=0$ to account for the known relation between $\lambda$ and the boundary classes in $\overline{\mathcal{M}}_{2,n+1}$. By intersecting this equation with the surfaces presented in Proposition~\ref{SurfacesIntersection} we obtain relations on the coefficients. 

Averaging any non-trivial relation under action of $S_n$ permuting the marked points on $\Mgnb$ gives a (possibly trivial) relation such that $c_{K_i}=c_{K_j}$ for $i,j\ne n+1$ and $c_{i:S}=c_{i:T}$ for $|S|=|T|$ and $|S\cup \{n+1\}|=|T\cup \{n+1\}|$. We first show that any relation of this type is trivial. We assume these symmetries hold in equation~(\ref{relation}) and denote $c_K=c_{K_i}$ for $i\ne n+1$ and $c_{i:|S|}=c_{i:S}$ for $n+1\nin S$.

Consider $ps:\Mgnb\longrightarrow\Mgnb^{ps}$ that contracts unmarked elliptic tails. All cycles in~(\ref{relation}) except $\gamma$ are contracted by this morphism. Hence pushing down the relation we obtain
\begin{equation*}
c_\gamma=0.
\end{equation*}
Test surface $S^a$ then immediately implies 
\begin{equation*}
c_{K_{n+1}}=0.
\end{equation*}
Test surface $S^b_{0:k}$ for $k=n$ implies 
\begin{equation}\label{Sb0}
(2g-2)nc_K-(2g-2)c_{g-1:0}=0,
\end{equation}
and more generally for $k=1,\dots,n-1$,
\begin{equation}\label{Sbk}
(2g-2)kc_K-(2-2(g-1)-(n+1-k))c_{g-1:n-k} +(n-k)c_{g-1:n-k-1} =0.
\end{equation}
Test surface $S^c$ gives
\begin{equation}\label{Sc}
(2(g-1)-2)c_K+(n-2)c_{0:2}-c_{g-1:n-1}+c_{g-1:n-2}=0.
\end{equation}
Now consider $\pi_i:\Mgnb\longrightarrow \overline{\mathcal{M}}_{g,n-1}$ that forgets the $i$th marked point. Pushing forward we obtain 
\begin{equation*}
\pi_i{}_*\gamma_{0:\{i,j\}}=\delta_{1,\emptyset}\text{ for $j=1,\dots,\hat{i},\dots,n+1$}, \hspace{0.8cm}\pi_i{}_*\gamma_{K_i}=(2(g-1)-2)\delta_{1,\emptyset},
\end{equation*}
with all other cycles pushing forward to give zero. Hence pushing forward~(\ref{relation}) we obtain
\begin{equation}\label{push}
(2(g-1)-2)c_{K}+(n-1)c_{0:2}+c_{g-1:n-1}=0.
\end{equation} 
For $g=3$ and $n=2$ equations~(\ref{Sb0}),(\ref{Sbk}),(\ref{Sc}) and (\ref{push}) give independent relations, hence 
\begin{equation*}
c_{0:2}=c_{g-1:0}=c_{g-1:1}=c_K=0.
\end{equation*}
For $g\geq3$ and $n\geq \max(g-1,3)$ we have $S^d_{0:k}$ for $k=3$ yields
\begin{equation}\label{Sc0}
c_{0:2}-c_{g-1:n-3}+2c_{g-1:n-2}-c_{g-1:n-1}=0.
\end{equation}
In this case, equations~(\ref{Sbk}) with $k=1$ and $2$, (\ref{Sc}), (\ref{push}) and~(\ref{Sc0}) provide independent relations, hence
\begin{equation*}
c_{0:2}=c_{g-1:n-3}=c_{g-1:n-2}=c_{g-1:n-1}=c_K=0.
\end{equation*}
Hence by~(\ref{Sb0}) and (\ref{Sbk}),
\begin{equation}\label{Cgm1}
c_{g-1:s}=0
\end{equation}
for $s=0,\dots n-1$.

Test surface $S^e_{h:s}$ gives the relation between the remaining boundary coefficients
\begin{equation}\label{Seh}
c_{h:s}=c_{h:s-1}
\end{equation} 
where if $h=0$ then $3\leq s\leq n$, if $h=1$ then $2\leq s\leq n$, if $h=g-1$ then $1\leq s\leq n-1$ and if $h= 2,\dots,g-2$ then $1\leq s\leq n$. Hence as $c_{0:2}=0$, setting $h=0$ obtains
\begin{equation*}
c_{0:s}=0
\end{equation*}
for $s=2,\dots,n$. 

Test surface $S^f_{h:2}$ gives the relation
\begin{equation*}
-c_{1:1}+c_\lambda+12c_0=0
\end{equation*}
for $h=0$ and 
\begin{equation*}
-c_{h:1}-c_{h+1:1}+c_\lambda+12c_0=0
\end{equation*}
for $1\leq h\leq g-2$. For $g=3$ we omit the $c_0$ term. Comparing this equation for consecutive $h$ gives
\begin{equation*} 
c_{h:1}=c_{h+2:1}
\end{equation*}
for $0\leq h\leq g-3$.   Combined with equation~(\ref{Seh}) this gives
\begin{equation*}
c_{h:s}=0
\end{equation*} 
for all even $h$ with $0\leq s\leq n$. For $g$ even this extends by~(\ref{Cgm1}) to all $h$ and $s$ except $h=1$, $s=0$.

For $g$ odd we have
\begin{equation}\label{ij}
c_{i:s}=c_{j:t}=c_\lambda+12c_0
\end{equation}
for $i,j$ odd $0\leq s,t\leq n$ but $s,t\ne 0$ if $i,j=1$. 

Test surface $S^g$ gives the relation
\begin{equation}\label{quartic}
c_\lambda+10c_0=0.
\end{equation}
For $g=3$ we omit the $c_0$ term and hence $c_\lambda=0$ and~(\ref{ij}) implies $c_{1:s}=0$ for $1\leq s\leq n$. Hence the one remaining coefficient in (\ref{relation}) must be $c_{1:0}=0$ and the relation is trivial.

For $g\geq 4$ even,~(\ref{ij}) gives
\begin{equation*}
c_\lambda+12c_0=0.
\end{equation*} 
Equation~(\ref{quartic}) then implies $c_\lambda=c_0=0$ and the one remaining coefficient in (\ref{relation}) must be $c_{1:0}=0$, hence the relation is trivial for even $g\geq4$.

For $g\geq 5$ odd, test surface $S^h$ gives
\begin{equation*}\label{Sh}
-c_{1:1}+3c_\lambda+27c_0=0,
\end{equation*}
which with~(\ref{ij}) and (\ref{quartic}) implies 
\begin{equation*}
c_\lambda=c_0=c_{i:s}=0
\end{equation*}
for all $i,s$ except $i=1,s=0$ and the only remaining coefficient in (\ref{relation}) must be $c_{1:0}=0$ and hence for all $g\geq 3$ and $n\geq g-1$ any relation of type~(\ref{relation}) that is invariant under the action of $S_n$ permuting the marked points is trivial.

Now consider relations of type~(\ref{relation}) that are not invariant under the action of $S_n$. As the symmetrisation is trivial, the only non-zero coefficients in the relation are the coefficients of the classes not fixed under the action of $S_n$, namely, $c_{K_j}$ for $j=1,\dots,n$ and $c_{i:S}$ for $S\ne \emptyset,\{n+1\}, \{1,\dots, n\},\{1,\dots,n+1\}$. We now use the test surfaces up to a possible relabelling of the points $p_1,\dots,p_n$ to show that any relation~(\ref{relation}) in only these coefficients is necessarily trivial.

Test surface $S^f_{h:k}$ for $1\leq h\leq g-2$ and $k=2,\dots,n$ gives
\begin{equation}\label{NSfhk}
c_{h:S}=-c_{h+1:S}
\end{equation}
for any $S\subset\{1,\dots,n\}$ with $1\leq|S|\leq n-1$. Further, test surface $S^e_{h:1}$ for $2\leq h\leq g-2$ gives
\begin{equation}\label{NSeh1}
(2h-1)c_{K_j}=c_{h:\{j\}}
\end{equation}
and for $h=g-1$ gives
\begin{equation}\label{NSehgm1}
(2g-4)c_{K_j}=c_{g-1:\{j\}}
\end{equation}
for $j=1,\dots,n$. Equations~(\ref{NSfhk}),(\ref{NSeh1}) and (\ref{NSehgm1}) then give 
\begin{equation*}
c_{K_j}=c_{i:\{j\}}=0
\end{equation*}
for $j=1,\dots,n$ and $1\leq i\leq g-1$. Test surface $S^e_{h:2}$ for $1\leq h\leq g-1$ gives
\begin{equation*}
c_{h:S}=c_{0:S}
\end{equation*}
for $S\subset \{1,\dots,n\}$ and $|S|=2$, which combined with~(\ref{NSfhk}) gives $c_{i:S}=0$ for $S\subset \{1,\dots,n\}$ and $|S|=2$. Test surface $S^e_{h:k}$ for $0\leq h\leq g-1$ and $k=3,\dots,n$ gives $c_{i:S}=0$ in all remaining cases.
\end{proof}

\Bdry*

\begin{proof}
Consider the first projection
\begin{equation*}
\widehat{\Delta}_{1:\emptyset}=\overline{\mathcal{M}}_{g-1,n+1}\times \overline{\mathcal{M}}_{1,1}\longrightarrow \overline{\mathcal{M}}_{g-1,n+1}.
\end{equation*}
Pulling back the infinitely many extremal divisors of Proposition~\ref{Mullane} we obtain by Proposition~\ref{prod} infinitely many extremal divisors on $\widehat{\Delta}_{1:\emptyset}$ for $g\geq3$ and $n\geq g-1$. Propositions \ref{higherextremal} and \ref{injective} complete the proof.
\end{proof}

\section{Principal strata}\label{principal}

\Principal*

\begin{proof}
Proceed by induction. Assume that $\varphi_{j+1}{}_*\overline{\PPP}(1^{2g-2})$ is rigid and extremal. 
If $[(\varphi_j)_*\overline{\PPP}(1^{2g-2})]$ is not extremal then it can be expressed as
\begin{equation*}
[(\varphi_j)_*\overline{\PPP}(1^{2g-2})]=\sum c_i [V_i]
\end{equation*}
for $c_i>0$, $V_i$ irreducible with class not proportional to $[(\varphi_j)_*\overline{\PPP}(1^{2g-2})]$.
Pushing forward this equation by $\pi_k:\overline{\mathcal{M}}_{g,2g-j-2}\longrightarrow \overline{\mathcal{M}}_{g,2g-j-3}$ forgetting the $k$th marked point for $k=1,\dots,2g-j-2$ we obtain 
\begin{equation*}
(\pi_k)_*[(\varphi_j)_*\overline{\PPP}(1^{2g-2})]= [(\varphi_{j+1})_*\overline{\PPP}(1^{2g-2})]    =\sum c_i (\pi_k)_*[V_i]
\end{equation*}
for each $k=1,...,2g-j-2$.

As the LHS is non-zero, for a fixed $k$ there is at least one $V_i$ such that $(\pi_k)_*[V_i]$ is non-zero. Further, as the LHS is extremal, $(\pi_k)_*[V_i]$ is necessarily a positive multiple of $[(\varphi_{j+1})_*\overline{\PPP}(1^{2g-2})]$. As this cycle is rigid, $V_i$ must be supported on $(\pi_k)^{-1}(\varphi_{j+1}{}_*\overline{\PPP}(1^{2g-2}))$ and hence $(\pi_{k'})_*[V_i]$ is non-zero for any other $k'$. This argument for each $k'$ yields $V_i$ is supported in the intersection of $(\pi_k)^{-1}(\varphi_{j+1}{}_*\overline{\PPP}(1^{2g-2}) )$ for $k=1,...,2g-j-2$. In particular, any $2g-j-3$ points in a general element of $V_i$ are distinct points in a hyperplane section of the canonical embedding, hence all $2g-j-2$ points must be distinct points in a hyperplane section of the canonical embedding and $V_i$ is supported on $(\varphi_j)_*\overline{\PPP}(1^{2g-2})$ and hence is a positive multiple of $[(\varphi_j)_*\overline{\PPP}(1^{2g-2})]$ providing a contradiction. 

Hence $[\varphi_{j}{}_*\overline{\PPP}(1^{2g-2})]$ is extremal if $[\varphi_{j+1}{}_*\overline{\PPP}(1^{2g-2})]$ is rigid and extremal. Further, if $[(\varphi_j)_*\overline{\PPP}(1^{2g-2})]$ is extremal but not rigid, then
\begin{equation*}
[(\varphi_j)_*\overline{\PPP}(1^{2g-2})]=c[V]
\end{equation*}
for $c>0$ and $V$ not supported on $(\varphi_j)_*\overline{\PPP}(1^{2g-2})$. The above argument provides a contradiction. 

The base case of the inductive argument $j=g-2$ is the divisorial case presented in Proposition~\ref{FV}
\end{proof}

\section{Meromorphic strata}\label{meromorphic}
The strategy employed to show the principal strata are rigid and extremal can be applied to the meromorphic strata with an alteration for some lower genus cases. We provide the inductive argument as the following series of propositions separating the more involved lower genus cases.

\begin{prop}\label{mero1}
For $g\geq 3$ and $j=0,\dots,g-1$ the cycle $[\varphi_j{}_*\overline{\PPP}(d_1,d_2,d_3,1^{2g-3})]$ is extremal and rigid in $\mbox{Eff}^{\hspace{0.1cm}g-j}(\overline{\mathcal{M}}_{g,2g-j})$, where $\varphi_j:\overline{\mathcal{M}}_{g,2g}\longrightarrow\overline{\mathcal{M}}_{g,2g-j}$ forgets the last $j$ points with $d_1+d_2+d_3=1$, $\sum_{d_i<0}d_i\leq-2$ and some $d_i=1$ if $g=3$.
\end{prop}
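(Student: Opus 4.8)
The plan is to run the same inductive scheme used in the proof of Theorem~\ref{thm:Principal}, now with base case $j=g-1$ (the divisorial case) supplied by Proposition~\ref{Mullane}, and to induct downward on $j$. So assume $[\varphi_{j+1}{}_*\overline{\PPP}(d_1,d_2,d_3,1^{2g-3})]$ is rigid and extremal and write an effective decomposition $[\varphi_j{}_*\overline{\PPP}(d_1,d_2,d_3,1^{2g-3})]=\sum c_i[V_i]$ with $c_i>0$ and $V_i$ irreducible of codimension one, not proportional to the stratum. Push forward under each $\pi_k:\overline{\mathcal{M}}_{g,2g-j}\to\overline{\mathcal{M}}_{g,2g-j-1}$ forgetting the $k$th point; since the image is the assumed extremal cycle, some $V_l$ pushes to a positive multiple of $[\varphi_{j+1}{}_*\overline{\PPP}(d_1,d_2,d_3,1^{2g-3})]$, and rigidity of the latter forces $V_l\subseteq \pi_k^{-1}(\varphi_{j+1}{}_*\overline{\PPP}(\cdots))$. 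The point of the argument is that such a $V_l$ must push forward nontrivially under \emph{every} $\pi_k$ (a general fibre of $\pi_k^{-1}(\varphi_{j+1}{}_*\overline{\PPP})\to\varphi_{j+1}{}_*\overline{\PPP}$ being a curve, not contained in any other $\pi_{k'}^{-1}(\cdots)$), so $V_l$ lies in $\bigcap_k \pi_k^{-1}(\varphi_{j+1}{}_*\overline{\PPP}(\cdots)) = \varphi_j{}_*\overline{\PPP}(d_1,d_2,d_3,1^{2g-3})$, a contradiction. Irreducibility of $\varphi_j{}_*\overline{\PPP}(d_1,d_2,d_3,1^{2g-3})$ (from Boissy, recorded after Proposition~\ref{Mullane}) is what guarantees the intersection is exactly this stratum and not a larger locus. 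Rigidity follows by the same argument applied to a decomposition $[\varphi_j{}_*\overline{\PPP}]=c[V]$.

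There is one genuine subtlety, which is why the hypothesis "some $d_i=1$ if $g=3$" appears. The step "some $V_l$ pushes to a positive multiple of the extremal cycle" relies on the preimage $\pi_k^{-1}(\varphi_{j+1}{}_*\overline{\PPP}(\cdots))$ being irreducible, or at least on being able to isolate a single component whose pushforwards are all nonzero. When one of the repeated exponents involved in the signature behaves specially — concretely in genus $3$, and most acutely in the codimension-two case — the preimage can break into two components, giving \emph{two} candidates for $V_l$ rather than one; this is exactly the complication the introduction flags for $g=2,3$. So the plan for $g=3$ is: first handle the generic $j$ by the argument above, then for the problematic low-codimension value of $j$ use the observation that $[\varphi_j{}_*\overline{\PPP}(d_1,d_2,d_3,1^{2g-3}) - c_l V_l]$ is an effective class; pushing this effective divisor forward under a suitable forgetful morphism produces an effective divisor on a smaller $\overline{\mathcal{M}}_{g,m}$, which must then have nonnegative intersection with the moving curve $B^m_{\kappa',1,-1}$ supplied by Proposition~\ref{covering}. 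The intersection numbers needed here are precisely those computed in Propositions~\ref{intg2} and~\ref{intg3}, and comparing them with $B^m_{\kappa',1,-1}\cdot D^m_{\kappa'}=0$ yields the contradiction. The hypothesis $d_i=1$ for some $i$ in genus $3$ is what makes the relevant signature $\kappa'=(d_2,d_3,1^3,d_1)$ with $d_1\geq 2$ fall under the scope of Proposition~\ref{intg3}.

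The main obstacle is therefore not the inductive bookkeeping, which is essentially identical to Theorem~\ref{thm:Principal}, but verifying that the only way a candidate $V_l$ can fail to be forced into the stratum is the two-component phenomenon above, and then closing that case with the moving-curve computation; this is where almost all of the real content lies, and it is the reason the $g=2$ and $g=3$ cases are separated out into their own propositions rather than absorbed into a uniform statement. One should also check at the outset that the numerical hypotheses $d_1+d_2+d_3=1$ and $\sum_{d_i<0}d_i\leq -2$ are preserved throughout the induction (they are, since forgetting points does not change the signature), so that Proposition~\ref{Mullane} genuinely applies at the base case and Boissy's irreducibility applies at every stage.
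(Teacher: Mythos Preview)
Your inductive skeleton is correct and matches the paper, but you have the role of the hypothesis ``some $d_i=1$ if $g=3$'' exactly backwards. In the paper's proof of Proposition~\ref{mero1} this hypothesis is what makes the \emph{simple} induction go through for every $j$, with no moving-curve argument at all. The point is that the pushforward $\pi_k{}_*[\varphi_j{}_*\overline{\PPP}(d_1,d_2,d_3,1^{2g-3})]$ equals $[\varphi_{j+1}{}_*\overline{\PPP}(d_1,d_2,d_3,1^{2g-3})]$ only when $k$ indexes a point of exponent~$1$; for $g\geq 4$ the range $k=4,\dots,2g-j$ already contains at least three such indices, and the pairwise-distinctness argument forces $V_i$ into the stratum. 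For $g=3$ without the extra hypothesis the range $k=4,\dots,6-j$ has only two indices when $j=1$, and \emph{that} is where the two-component problem appears. Assuming (say) $d_3=1$ buys you one more exponent-$1$ position, so the paper runs the induction over $k=3,\dots,6-j$, again at least three indices, and the intersection of the preimages is exactly the stratum---no second component, no moving curve.

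So your proposed use of Proposition~\ref{intg3} here is misplaced: that proposition is invoked only in Proposition~\ref{mero2}, which treats $g=3$ with \emph{no} $d_i=1$ (whence, after relabelling, $d_1\geq 2$), and it is precisely the absence of the extra forgetful map that forces the moving-curve detour there. Your claim that ``$d_i=1$ is what makes $\kappa'=(d_2,d_3,1^3,d_1)$ fall under Proposition~\ref{intg3}'' is incorrect; Proposition~\ref{intg3} only needs $d_1\geq 2$ and nonzero $d_2,d_3$, and is applied in the complementary case. For Proposition~\ref{mero1} you should simply specify the correct range of $k$ (including $k=3$ when $g=3$), verify that this range always has at least three elements for $j\leq g-2$, and conclude as in Theorem~\ref{thm:Principal}.
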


\begin{proof} Proceed again by induction. Assume $[(\varphi_{j+1})_*\overline{\PPP}(d_1,d_2,d_3,1^{2g-3})]$ is rigid and extremal. 
If $[(\varphi_j)_*\overline{\PPP}(d_1,d_2,d_3,1^{2g-3})]$ is not extremal then it can be expressed as
\begin{equation*}
[(\varphi_j)_*\overline{\PPP}(d_1,d_2,d_3,1^{2g-3})]=\sum c_i [V_i]
\end{equation*}
for $c_i>0$, $V_i$ irreducible with class not proportional to $[(\varphi_j)_*\overline{\PPP}(d_1,d_2,d_3,1^{2g-3})]$. Pushing forward this equation under $\pi_k:\overline{\mathcal{M}}_{g,2g-j}\longrightarrow \overline{\mathcal{M}}_{g,2g-j-1}$ forgetting the $k$th point we obtain 
\begin{equation*}
(\pi_k)_*[(\varphi_j)_*\overline{\PPP}(d_1,d_2,d_3,1^{2g-3})]= [(\varphi_{j+1})_*\overline{\PPP}(d_1,d_2,d_3,1^{2g-3})]    =\sum c_i (\pi_k)_*[V_i]
\end{equation*}
for each $k=4,...,2g-j$ for $g\geq 3$. Without loss of generality assume that $d_3=1$ when $g=3$. Then the equation will hold in the $g=3$ case for $k=3,\dots, 6-j$. 

As the LHS is non-zero, for a fixed $k$ there is at least one $V_i$ such that $(\pi_k)_*[V_i]$ is non-zero. Further, as the LHS is extremal, $(\pi_k)_*[V_i]$ is necessarily a positive multiple of $[(\varphi_{j+1})_*\overline{\PPP}(d_1,d_2,d_3,1^{2g-3})]$. But as this cycle is rigid, $V_i$ must be supported on 
\begin{equation*}
(\pi_k)^{-1}(\varphi_{j+1}{}_*\overline{\PPP}(d_1,d_2,d_3,1^{2g-3}))
\end{equation*}
and hence $(\pi_{k'})_*[V_i]$ is non-zero for any other $k'=4,...,2g-j$ for $g\geq 4$ or $k'=3,\dots 6-j$ for $g=3$. This argument for each $k'$ yields $V_i$ is supported in the intersection of $(\pi_k)^{-1}(\varphi_{j+1}{}_*\overline{\PPP}(d_1,d_2,d_3,1^{2g-3}) )$ for $k=4,...,2g-j$ for $g\geq 4$ or $k=3,\dots 6-j$ for $g=3$. A general element of $V_i$ is hence of the form $[C,p_1,...,p_{2g-j}]\in \mathcal{M}_{g,2g-j}$ with
\begin{equation*}
d_1p_1+d_2p_2+d_3p_3+\sum_{i=4,i\ne k}^{2g-j}p_i +\sum_{i=1}^{j+1}q_i \sim K_C
\end{equation*}
for some $q_i$ with $k=4,...,2g-j$. But this implies that for $g\geq4$ the $p_i$ for $i=4,...,2g-j$ are all at least pairwise distinct and hence distinct. Similarly for $g=3$ the $p_i$ for $i=3,\dots,6-j$ are all at least pairwise distinct and hence distinct. 

Hence in this case we have $V_i$ is supported on $(\varphi_{j})_*\overline{\PPP}(d_1,d_2,d_3,1^{2g-3})$ and $[V_i]$ is a positive multiple of $[(\varphi_j)_*\overline{\PPP}(1^{2g-2})]$ providing a contradiction. 

Hence if $(\varphi_{j+1})_*\overline{\PPP}(d_1,d_2,d_3,1^{2g-3})$ is rigid and extremal then $(\varphi_{j})_*\overline{\PPP}(d_1,d_2,d_3,1^{2g-3})$ is rigid and extremal. The base case for the inductive argument is the divisorial case $j=g-1$ presented in Proposition~\ref{Mullane}.
\end{proof}

\begin{prop}\label{mero2}
For $g=3$ and $j=0,1,2$ the cycle $[\varphi_j{}_*\overline{\PPP}(d_1,d_2,d_3,1^{3})]$ is extremal and rigid in $\mbox{Eff}^{\hspace{0.1cm}3-j}(\overline{\mathcal{M}}_{g,6-j})$, where $\varphi_j:\overline{\mathcal{M}}_{3,6}\longrightarrow\overline{\mathcal{M}}_{3,6-j}$ forgets the last $j$ points with $d_1+d_2+d_3=1$, $\sum_{d_i<0}d_i\leq-2$.
\end{prop}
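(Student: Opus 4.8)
The plan is to run the downward induction on $j$ from the proof of Proposition~\ref{mero1}, inserting the extra argument needed precisely at the codimension-two step $j=1$, where the hypothesis ``some $d_i=1$'' of Proposition~\ref{mero1} is exactly what guarantees enough forgetful morphisms. We may assume no $d_i=1$, since otherwise the statement is Proposition~\ref{mero1}; then $d_1+d_2+d_3=1$ and $\sum_{d_i<0}d_i\le-2$ force some $d_i\ge2$, and we relabel so that $d_1\ge2$. Write $\kappa=(d_1,d_2,d_3,1,1,1)$, so $\varphi_j{}_*\overline{\PPP}(\kappa)$ is irreducible of codimension $3-j$ in $\overline{\mathcal{M}}_{3,6-j}$. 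The base case $j=2$ is the divisorial statement that $D^4_{d_1,d_2,d_3,1^3}$ is rigid and extremal, which is Proposition~\ref{Mullane} for $g=3$ and needs no condition on the $d_i$.

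For the step $j=2\to j=1$, suppose $[\varphi_1{}_*\overline{\PPP}(\kappa)]=\sum c_iV_i$ is a nontrivial effective decomposition into irreducible $V_i$, none proportional to $[\varphi_1{}_*\overline{\PPP}(\kappa)]$. In $\overline{\mathcal{M}}_{3,5}$ the coefficient pattern is $(d_1,d_2,d_3,1,1)$, and as no $d_i=1$ only the forgetful morphisms $\pi_4,\pi_5:\overline{\mathcal{M}}_{3,5}\to\overline{\mathcal{M}}_{3,4}$ are compatible with the symmetry exchanging $p_4,p_5$. Pushing the decomposition forward under these, using $\pi_{k*}[\varphi_1{}_*\overline{\PPP}(\kappa)]=[\varphi_2{}_*\overline{\PPP}(\kappa)]=2[D^4_{d_1,d_2,d_3,1^3}]$ and the rigidity and extremality from the base case, produces exactly as in Proposition~\ref{mero1} a single component $V_l\subseteq\pi_4^{-1}(\varphi_2{}_*\overline{\PPP}(\kappa))\cap\pi_5^{-1}(\varphi_2{}_*\overline{\PPP}(\kappa))$. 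Writing $L=K_C-d_1p_1-d_2p_2-d_3p_3$, the interior of this intersection is where $L-p_4$ and $L-p_5$ are both effective; a general such point has $p_4\ne p_5$, so $|L|=\{p_4+p_5+p_6\}$ and this locus is $\varphi_1{}_*\overline{\PPP}(\kappa)$, while the only other codimension-two component is the cycle $Z\subset\Delta_{0:\{4,5\}}$ cut out by requiring the genus-$3$ component $[C,p_1,p_2,p_3,x]$, with $x$ the node, to lie in $D^4_{d_1,d_2,d_3,1^3}$ (no other boundary divisor contributes, since $\Delta_{0:\{4,5\}}$ is the only one on which $\pi_4$ and $\pi_5$ impose the same condition). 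As $V_l$ is not proportional to $[\varphi_1{}_*\overline{\PPP}(\kappa)]$, we must have $V_l=Z$.

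To rule this out, note $\sum_{i\ne l}c_iV_i=[\varphi_1{}_*\overline{\PPP}(\kappa)]-c_l[Z]$ is effective, and push it forward under $\pi_1:\overline{\mathcal{M}}_{3,5}\to\overline{\mathcal{M}}_{3,4}$ forgetting the coefficient-$d_1$ point. Since $d_1\ge2$, $\pi_1$ restricted to $\varphi_1{}_*\overline{\PPP}(\kappa)$ is generically finite with image the divisor $D^4_{d_2,d_3,1^3,d_1}$, and $\pi_1$ restricted to $Z$ is generically finite with image the boundary divisor $\delta_{0:\{3,4\}}$, so we obtain an effective divisor class $N[D^4_{d_2,d_3,1^3,d_1}]-c_lM[\delta_{0:\{3,4\}}]$ with $N,M>0$ on $\overline{\mathcal{M}}_{3,4}$. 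Intersecting with the moving curve $B^4_{d_2,d_3,1^3,d_1,1,-1}$ of Proposition~\ref{covering}, which has zero intersection with $D^4_{d_2,d_3,1^3,d_1}$ and, by Proposition~\ref{intg3}, intersection $24d_2^2d_3^2$ with $\delta_{0:\{3,4\}}$, yields the value $-c_lM\cdot24d_2^2d_3^2<0$; but a moving curve meets every effective divisor nonnegatively, a contradiction since $c_l,M>0$ and $d_2,d_3\ne0$. Hence $[\varphi_1{}_*\overline{\PPP}(\kappa)]$ is extremal, and the same argument applied to a hypothetical $[\varphi_1{}_*\overline{\PPP}(\kappa)]=c[V]$ with $V$ not supported on $\varphi_1{}_*\overline{\PPP}(\kappa)$ (forcing $V=Z$, hence $N[D^4_{d_2,d_3,1^3,d_1}]=c_lM[\delta_{0:\{3,4\}}]$ and the same numerical contradiction) gives rigidity.

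For the step $j=1\to j=0$, once $[\varphi_1{}_*\overline{\PPP}(\kappa)]$ is rigid and extremal the pattern $(d_1,d_2,d_3,1,1,1)$ in $\overline{\mathcal{M}}_{3,6}$ has three coefficient-$1$ points and hence three compatible forgetful morphisms $\pi_4,\pi_5,\pi_6$; the argument of Proposition~\ref{mero1} applies unchanged, as a component $V_l$ of a nontrivial decomposition would have its three coefficient-$1$ points pairwise distinct and, by the linear-system argument on $L$, be supported on $\overline{\PPP}(\kappa)$, contradicting non-proportionality, with rigidity following the same way. This completes the induction. The main obstacle is the codimension-two step: pinning down the second component $Z$ precisely, verifying that $V_l$ can only be $\varphi_1{}_*\overline{\PPP}(\kappa)$ or $Z$, and matching the labellings of $\pi_1$, the boundary divisor $\delta_{0:\{3,4\}}$ and the signature of $\kappa$ so that Propositions~\ref{covering} and~\ref{intg3} apply to the pushed-forward effective divisor — the strict positivity $24d_2^2d_3^2>0$ being exactly what closes the argument.
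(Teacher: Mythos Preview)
Your proof is correct and follows essentially the same route as the paper's: the same downward induction from the divisorial base case of Proposition~\ref{Mullane}, the same identification at $j=1$ of the extra boundary component $Z\subset\Delta_{0:\{4,5\}}$ as the only alternative to $\varphi_1{}_*\overline{\PPP}(\kappa)$, and the same elimination of $Z$ by pushing the residual effective cycle forward under $\pi_1$ and intersecting with the moving curve $B^4_{(d_2,d_3,1^3,d_1),1,-1}$ via Propositions~\ref{covering} and~\ref{intg3}. Your write-up is in fact slightly more careful about the positive constants $N,M$ in the pushforward and about justifying why the intersection $\pi_4^{-1}\cap\pi_5^{-1}$ has exactly those two components.
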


\begin{proof} The case where some $d_i=1$ is covered by Proposition~\ref{mero1}. Assume $d_i\ne 1$ and without loss of generality assume $d_1\geq 2$. Again, we proceed by induction. Assume $[(\varphi_{j+1})_*\overline{\PPP}(d_1,d_2,d_3,1^{3})]$ is rigid and extremal. If $[(\varphi_j)_*\overline{\PPP}(d_1,d_2,d_3,1^{3})]$ is not extremal then it can be expressed as
\begin{equation*}
[(\varphi_j)_*\overline{\PPP}(d_1,d_2,d_3,1^{3})]=\sum c_i [V_i]
\end{equation*}
for $c_i>0$, $V_i$ irreducible with class not proportional to $[(\varphi_j)_*\overline{\PPP}(d_1,d_2,d_3,1^{3})]$. Pushing forward this equation we obtain 
\begin{equation*}
(\pi_k)_*[(\varphi_j)_*\overline{\PPP}(d_1,d_2,d_3,1^{3})]= [(\varphi_{j+1})_*\overline{\PPP}(d_1,d_2,d_3,1^{3})]    =\sum c_i (\pi_k)_*[V_i]
\end{equation*}
for each $k=4,...,6-j$. As the LHS is non-zero, this implies for a fixed $k$, there is at least one $V_i$ such that $(\pi_k)_*[V_i]$ is non-zero. Further, as the LHS is extremal, $(\pi_k)_*[V_i]$ is necessarily a positive multiple of $[(\varphi_{j+1})_*\overline{\PPP}(d_1,d_2,d_3,1^{3})]$. But as this cycle is rigid, $V_i$ must be supported on 
\begin{equation*}
(\pi_k)^{-1}(\varphi_{j+1}{}_*\overline{\PPP}(d_1,d_2,d_3,1^{3}))
\end{equation*}
and hence $(\pi_{k'})_*[V_i]$ is non-zero for any other $k'=4,...,6-j$. This argument for each $k'$ yields $V_i$ is supported in the intersection of $(\pi_k)^{-1}(\varphi_{j+1}{}_*\overline{\PPP}(d_1,d_2,d_3,1^{3}) )$ for $k=4,...,6-j$. 

Hence for $j=0$ a general element of $V_i$ is of the form $[C,p_1,\dots,p_{6}]\in \mathcal{M}_{3,6}$ with
\begin{equation*}
d_1p_1+d_2p_2+d_3p_3+\sum_{i=4,i\ne k}^{6}p_i +q_k \sim K_C
\end{equation*}
for some $q_k$ for each $k=4,5,6$. But this implies that the $p_i$ for $i=4,5,6$ are all at least pairwise distinct and hence distinct. Hence $V_i$ is supported on $\overline{\PPP}(d_1,d_2,d_3,1^{3})$ providing a contradiction and showing if $[(\varphi_{1})_*\overline{\PPP}(d_1,d_2,d_3,1^{3})]$ is rigid and extremal then $[\overline{\PPP}(d_1,d_2,d_3,1^{3})]$ is rigid and extremal.

In the remaining case $j=1$, $V_i$ is supported in the intersection of $(\pi_k)^{-1}(\varphi_{2}{}_*\overline{\PPP}(d_1,d_2,d_3,1^{3}) )$ for $k=4,5$. In this case there are two possible candidates for where the irreducible cycle $V_i$ is supported. The cycle $V_i$ is supported on either $\varphi_1{}_*\overline{\PPP}(d_1,d_2,d_3,1^{3})$, or on the cycle 
\begin{equation*}
X:=\varphi_1^*(\varphi_2{}_*\overline{\PPP}(d_1,d_2,d_3,1^{3}))\cdot \delta_{0:\{4,5\}}
\end{equation*}
which can also be described as
\begin{equation*}
X:=\left\{ [C,p_1,p_2,p_3,q]\cup_{q=x}[\PP^1,x,p_4,p_5]\in \overline{\mathcal{M}}_{3,5}\hspace{0.2cm}\big|\hspace{0.2cm} [C,p_1,p_2,p_3,q]\in D^4_{d_1,d_2,d_3,1^3}      \right\}
\end{equation*}
where $[\PP^1,x,p_4,p_5]$ is a rational curve marked at three distinct points and 
\begin{equation*}
[D^4_{d_1,d_2,d_3,1^3}]=\frac{1}{2}\varphi_2{}_*[\overline{\PPP}(d_1,d_2,d_3,1^{3})].
\end{equation*}
The irreducibility of $X$ follows from the irreducibility of $\overline{\PPP}(d_1,d_2,d_3,1^{3})$. Hence if $V_i$ is supported on $X$ then $[V_i]$ is proportional to $[X]$ and
\begin{equation*}
\pi_1{}_*[V_i]=  e\delta_{0:\{3,4\}}
\end{equation*}
for some $e>0$.  

As the cycle $[(\varphi_1)_*\overline{\PPP}(d_1,d_2,d_3,1^{3})]-c_i[V_i]$ is an effective codimension two cycle in $\overline{\mathcal{M}}_{3,5}$, by pushing down under the morphism that forgets the first marked point we obtain the effective divisor class 
\begin{equation*}
\pi_1{}_*\left([(\varphi_1)_*\overline{\PPP}(d_1,d_2,d_3,1^{3})]-c_i[V_i]\right)=D^4_{d_2,d_3,1^3,d_1}-c_i e\delta_{0:\{3,4\}}.
\end{equation*}
However, by Proposition~\ref{covering} and Proposition~\ref{intg3} we observe
\begin{equation*}
B^4_{\kappa,1,-1}\cdot (D^4_{d_2,d_3,1^3,d_3}-c_i e\delta_{0:\{3,4\}})=0-24d_2^2d_3^2c_ie<0,
\end{equation*}
for $\kappa=(d_2,d_3,1^3,d_1)$, which contradicts the moving curve $B^4_{\kappa,1,-1}$ introduced in \S\ref{movingcurves} having non-negative intersection with all effective divisors. Hence $V_i$ is not supported on $X$ and must be supported on $\varphi_{1}{}_*\overline{\PPP}(d_1,d_2,d_3,1^{3})$. Hence $[\varphi_{1}{}_*\overline{\PPP}(d_1,d_2,d_3,1^{3})]$ is rigid and extremal if $[\varphi_{2}{}_*\overline{\PPP}(d_1,d_2,d_3,1^{3})]$ is rigid and extremal. The base case for the inductive argument is the divisorial case $j=2$ presented in Proposition~\ref{Mullane}.
\end{proof}

\begin{prop}\label{mero3}
For $g= 2$, the cycle $[\overline{\PPP}(h,-h,1,1)]$ for $h\geq2$ is extremal and rigid in $\mbox{Eff}^{\hspace{0.1cm}2}(\overline{\mathcal{M}}_{2,4})$.
\end{prop}

\begin{proof}
$[\pi_k{}_*\overline{\PPP}(h,-h,1,1)]$ is rigid and extremal for $k=3,4$ by Proposition~\ref{Mullane}. 
If $[\overline{\PPP}(h,-h,1,1)]$ is not extremal then it can be expressed as
\begin{equation*}
[\overline{\PPP}(h,-h,1,1)]=\sum c_i [V_i]
\end{equation*}
for $c_i>0$, $V_i$ irreducible with class not proportional to $[\overline{\PPP}(h,-h,1,1)]$. Pushing forward this equation we obtain 
\begin{equation*}
(\pi_k)_*[\overline{\PPP}(h,-h,1,1)]= [(\pi_k)_*\overline{\PPP}(h,-h,1,1)]    =\sum c_i (\pi_k)_*[V_i]
\end{equation*}
for $k=3,4$. However, this implies there is some $V_i$ such that $\pi_4{}_*[V_i]=[\pi_4{}_*V_i]\ne 0$. But as $[\pi_4{}_*\overline{\PPP}(h,-h,1,1)]$ is extremal (Proposition~\ref{Mullane}), $[\pi_4{}_*V_i]$ must be a positive multiple of $[\pi_4{}_*\overline{\PPP}(h,-h,1,1)]$. Further, as $[\pi_4{}_*\overline{\PPP}(h,-h,1,1)]$ is rigid, $V_i$ must be supported on $\pi_4^{-1}(\pi_4{}_*\overline{\PPP}(h,-h,1,1))$.

Hence $\pi_3{}_*[V_i]\ne 0$ and the same argument yields $V_i$ must be supported on $\pi_3^{-1}(\pi_3{}_*\overline{\PPP}(h,-h,1,1))$. The intersection 
\begin{equation*}
\pi_3^{-1}(\pi_3{}_*\overline{\PPP}(h,-h,1,1))\cap \pi_4^{-1}(\pi_4{}_*\overline{\PPP}(h,-h,1,1))
\end{equation*}
has two irreducible components. $V_i$ is either supported on $\overline{\PPP}(h,-h,1,1)$ or
\begin{equation*}
X:=\pi_4^*(\pi_4{}_*\overline{\PPP}(h,-h,1,1))\cdot \delta_{0:\{3,4\}}
\end{equation*}
which can also be described as
\begin{equation*}
X:=\left\{ [C,p_1,p_2,q]\cup_{q=x}[\PP^1,x,p_3,p_4]\in \overline{\mathcal{M}}_{2,4}\hspace{0.2cm}\big|\hspace{0.2cm} [C,p_1,p_2,q]\in D^3_{h,-h,1^2}      \right\}
\end{equation*}
where $[\PP^1,x,p_3,p_4]$ is a rational curve marked at three distinct points and 
\begin{equation*}
D^3_{h,-h,1^2}=\varphi_1{}_*\overline{\PPP}(h,-h,1,1)=\pi_4{}_*\overline{\PPP}(h,-h,1,1).
\end{equation*}
The irreducibility of $X$ follows from the irreducibility of $\overline{\PPP}(h,-h,1,1)$. Hence if $V_i$ is supported on $X$ then $[V_i]$ is proportional to $[X]$ and  
\begin{equation*}
\pi_1{}_*[V_i]=  e\delta_{0:\{2,3\}}
\end{equation*}
for some $e>0$.  

As the cycle $[\overline{\PPP}(h,-h,1,1)]-c_i[V_i]$ is effective, by pushing down under the morphism that forgets the first marked point we obtain the effective class 
\begin{equation*}
\pi_1{}_*\left([\overline{\PPP}(h,-h,1,1))]-c_i[V_i]\right)=D^3_{-h,1,1,h}-c_i e\delta_{0:\{2,3\}}.
\end{equation*}
However, by Proposition~\ref{covering} and Proposition~\ref{intg2} 
\begin{equation*}
B^3_{\kappa,1,-1}\cdot (D^3_{-h,1,1,h}-c_i e\delta_{0:\{2,3\}})=0-8h^2e<0,
\end{equation*}
for $\kappa=(-h,1,1,h)$ which contradicts the moving curve $B^3_{\kappa,1,-1}$ introduced in \S\ref{movingcurves} having non-negative intersection with all effective divisors. Hence $V_i$ is not supported on $X$ and must be supported on $\overline{\PPP}(h,-h,1,1)$ providing a contradiction with the given effective decomposition. Hence $[\overline{\PPP}(h,-h,1,1)]$ is rigid and extremal.
\end{proof}

We record the previous three propositions as the following theorem.

\Meromorphic*

This immediately gives the following corollary on the structure of the effective cones.

\Cone*

\begin{proof}
The rigid and extremal cycles presented in Theorem~\ref{meroextremal} have non-proportional classes as the pushforwards
\begin{equation*}
(\varphi_{g-j-1}){}_*[(\varphi_j)_*\overline{\PPP}(d_1,d_2,d_3,1^{2g-3})]=[(\varphi_{g-1})_*\overline{\PPP}(d_1,d_2,d_3,1^{2g-3})]=\frac{1}{(g-1)!}D^{g+1}_{d_1,d_2,d_3,1^{2g-3}}
\end{equation*}
have non-proportional classes as divisors by Proposition~\ref{Mullane}. Hence we have infinitely many extremal rays for $k=n-g$.

To extend the result, fix $k$ and pullback these classes under the forgetful morphism $\varphi:\Mgnb\longrightarrow\overline{\mathcal{M}}_{g,g+k}$. All previous arguments hold for the pullbacks.
\end{proof}

\section{Extremal cycles in genus one}\label{genusone}
In this section we examine the genus one case. In this case the meromorphic strata of canonical divisors have codimension one and to produce higher codimension cycles we intersect the pullbacks of strata under forgetful morphisms.

\begin{defn}
Set $m\geq 1$ and let $\underline{d}^j=(d_1^j,\dots,d_{n-m+1}^j)$ for $j=1,\dots,m$ be distinct non-zero integer partitions of zero. Then
\begin{equation*}
X(\underline{d}^1,\dots,\underline{d}^m):=\left\{[E,p_1,\dots,p_n]\in\mathcal{M}_{1,n}\hspace{0.15cm}\big| \hspace{0.15cm}\OO_E\left(d^j_{n-m+1}p_{n-m+j}+\sum_{i=1}^{n-m}d^j_ip_i \right)\sim \OO_E \text{ for $j=1,\dots,m$}\right\}
\end{equation*}
or alternatively
\begin{equation*}
X(\underline{d}^1,\dots,\underline{d}^m):=\left\{[E,p_1,\dots,p_n]\in\mathcal{M}_{1,n}\hspace{0.15cm}\big| \hspace{0.15cm}[E,p_1,\dots,p_{n-m},p_{n-m+j}]\in \PPP(\underline{d}^j)   \right\}
\end{equation*}
has codimension-$m$ in $\mathcal{M}_{1,n}$ with closure in $\overline{\mathcal{M}}_{1,n}$ denoted $\overline{X}(\underline{d}^1,\dots,\underline{d}^m)$. 
\end{defn}
We now specialise to the subvarieties of interest to us.

\Irred*

\begin{proof}
By forgetting the last $m-1$ points of $\overline{X}(\underline{d}^1,\dots,\underline{d}^m)$ in $\overline{\mathcal{M}}_{1,n}$ we obtain $\overline{\PPP}(\underline{d}^1)$, which by~\cite{Boissy} is irreducible. But for every 
\begin{equation*}
[E,p_1,\dots,p_{n-m+1}]\in \PPP(\underline{d}^1),
\end{equation*}
by the group law there is a unique 
\begin{equation*}
p_{n-m+j}=-\sum_{i=1}^{n-m}d^j_ip_i
\end{equation*}
which will in general be distinct from $p_i$ for $i=1,\dots, p_{n-m+j-1}$, hence 
\begin{equation*}
[E,p_1,\dots,p_{n-m},p_{n-m+j}]\in {\PPP}(\underline{d}^j)
\end{equation*}
for $j=2,\dots,m$. 

Since $\PPP(\underline{d}^1)$ is irreducible, the locus $X(\underline{d}^1,\dots,\underline{d}^m)$ is irreducible.
\end{proof}

\Genusone*

\begin{proof}
If $[\overline{X}(\underline{d}^1,\dots,\underline{d}^m)]$ is not extremal then it can be expressed as
\begin{equation*}
[\overline{X}(\underline{d}^1,\dots,\underline{d}^m)]=\sum c_i [V_i]
\end{equation*}
for $c_i>0$, $V_i$ irreducible with class not proportional to $[\overline{X}(\underline{d}^1,\dots,\underline{d}^m)]$. Let $\vartheta_j=\varphi_{\{n-m+1,\dots \widehat{n-m+j},\dots,n\}}$ and $\vartheta_{j,k}=\varphi_{\{n-m+1,\dots,n\}\setminus\{n-m+j,n-m+k\}}$ for $j\ne k$, that is, 
\begin{equation*}
\vartheta_j:\overline{\mathcal{M}}_{1,n}\longrightarrow \overline{\mathcal{M}}_{1,n-m+1}
\end{equation*}
forgets all but the marked points $1,\dots,n-m,n-m+j$ for $j=1,\dots,m$ and 
\begin{equation*}
\vartheta_{j,k}:\overline{\mathcal{M}}_{1,n}\longrightarrow \overline{\mathcal{M}}_{1,n-m+2}
\end{equation*}
forgets all but the marked points $1,\dots,n-m,n-m+j,n-m+k$ for $\{j,k\}\subset\{1,\dots,m\}$. Let $\vartheta_k^j$ be the intermediate map that forgets just one point such that $\vartheta_j=\vartheta_k^j\circ \vartheta_{j,k}$. Pushing forward under $\vartheta_j$ we obtain
\begin{equation*}
\vartheta_j{}_*[\overline{X}(\underline{d}^1,\dots,\underline{d}^m)]=c[\overline{\PPP}(\underline{d}^j)]=\sum c_i \vartheta_j{}_*[V_i]
\end{equation*}
where $c=1$ for $j=1$ and $c=(d^1_{n-m+1})^2$ for $j=2,\dots,m$. For fixed $j$ this implies there must be some $i$ such that $\vartheta_j{}_*[V_i]\ne0$. But as $[\overline{\PPP}(\underline{d}^j)]$ is an extremal divisor $\vartheta_j{}_*[V_i]$ must be a positive multiple of $[\overline{\PPP}(\underline{d}^j)]$ and further as $[\overline{\PPP}(\underline{d}^j)]$ is rigid, $\vartheta_j{}_*[V_i]$ must be supported in $\overline{\PPP}(\underline{d}^j)$ and hence $V_i$ is contained in $\vartheta_j^{-1}\overline{\PPP}(\underline{d}^j)$.

Further, this implies $\vartheta_{j,k}{}_*[V_i]\ne 0$ and hence $\vartheta_{j,k}{}_*V_i$ is contained in $(\vartheta_k^j)^{-1}\overline{\PPP}(\underline{d}^j)$. Hence $\vartheta_k{}_*[V_i]=(\vartheta_j^k)_*[\vartheta_{j,k}{}_*[V_i]]\ne0$ and the same argument shows $V_i$ is contained in $\vartheta_k^{-1}\overline{\PPP}(\underline{d}^k)$ and $\vartheta_{j,k}{}_*V_i$ is contained in $(\vartheta_j^k)^{-1}\overline{\PPP}(\underline{d}^k)$.

But this gives $\vartheta_{j,k}{}_*V_i$ is contained in $(\vartheta_k^j)^{-1}\overline{\PPP}(\underline{d}^j)\cap(\vartheta_j^k)^{-1}\overline{\PPP}(\underline{d}^k)$ for any choice of $j,k$ which implies $V_i$ is supported in $\overline{X}(\underline{d}^1,\dots,\underline{d}^m)$ providing a contradiction. 
\end{proof}
This immediately gives the following corollary on the structure of the effective cones.

\Genusonecor*

\begin{proof}
The rigid and extremal cycles presented in Theorem~\ref{thm:genusone} have non-proportional classes. Observe
\begin{equation*}
\vartheta_1{}_*[\overline{X}(\underline{d}^1,\dots,\underline{d}^k)]=[\overline{\PPP}(\underline{d}^1)]=D^{n-k+1}_{\underline{d}^1}
\end{equation*}
and
\begin{equation*}
\vartheta_j{}_*[\overline{X}(\underline{d}^1,\dots,\underline{d}^k)]=(d^1_{n-k+1})^2[\overline{\PPP}(\underline{d}^j)]=(d^1_{n-k+1})^2D^{n-k+1}_{\underline{d}^j}.
\end{equation*}
for $j=2,\dots,k$. Which provide non-proportional divisor classes for $[\overline{X}(\underline{d}^1,\dots,\underline{d}^k)]$ with distinct $\underline{d}^j$ by Theorem~\ref{ChenCoskun}. 
\end{proof}


\bibliographystyle{plain}
\bibliography{base}
\end{document}